\newtheorem*{rep@theorem}{\rep@title}
\newcommand{\newreptheorem}[2]{%
\newenvironment{rep#1}[1]{%
\def\rep@title{#2 \ref{##1}}%
\begin{rep@theorem}}%
{\end{rep@theorem}}}
\newtheorem{theorem}{Theorem}[section]
\newtheorem{lemma}[theorem]{Lemma}
\newtheorem{assumption}[theorem]{Assumption}
\newtheorem{definition}[theorem]{Definition}
\newtheorem{remark}[theorem]{Remark}
\newtheorem{prop}[theorem]{Proposition}
\newcommand{\beq}{\begin{eqnarray}}
\newcommand{\eeq}{\end{eqnarray}}
\newcommand{\beqq}{\begin{eqnarray*}}
\newcommand{\eeqq}{\end{eqnarray*}}
\def\Rar{\Rightarrow}
\def\rar{\rightarrow}
\newcommand{\BP}{\mathbb{P}}
\newcommand{\EXP}[1]{\mathbb{E}\!\left\{#1\right\} }
\newcommand{\VAR}[1]{\mathsf{VAR}\!\left(#1\right) }
\newcommand{\remove}[1]{}
\newcommand{\Hg}{\mathrm{H}}
\newcommand{\mR}{\mathbb{R}}
\newcommand{\bN}{\mathbb{N}}
\newcommand{\cN}{\mathcal{N}}
\newcommand{\mrD}{\mathring{\mathcal{D}}}
\newcommand{\mrC}{\mathring{\mathcal{C}}}
\newcommand{\mcF}{\mathcal{F}}
\newcommand{\mcK}{\mathcal{K}}
\newcommand{\mcL}{\mathcal{L}}
\newcommand{\mcC}{\mathcal{C}}
\newcommand{\mcD}{\mathcal{D}}
\newcommand{\pM}{\mathbf{M}^{\phi}}
\newcommand{\pMi}{\mathbf{M}}
\newcommand{\0}{\textbf{0}}
\newcommand{\pF}{\mathbf{F}^{\phi}}
\newcommand{\pB}{\mathbf{B}^{\phi}}
\newcommand{\pBi}{\mathbf{B}}
\newcommand{\pL}{\mathbf{L}}
\newcommand{\1}{\mathbf{1}}
\def\P{\mathcal P}
\newcommand{\X}{{\mathcal X}}
\newcommand{\Y}{{\mathcal Y}}
\newcommand{\Vor}{\mathrm{Vor}}
\DeclareMathOperator{\MSA}{MSA}
\DeclareMathOperator{\dgm}{Dgm}
\DeclareMathOperator{\diam}{diam}
\DeclareMathOperator{\MST}{MST}
\newcommand{\cone}{\Lambda}
\newcommand{\cpts}{\mathcal{Q}}
\begin{document}

\title{Central Limit Theorem for Euclidean Minimal Spanning Acycles}

\date{\today}

\author{Primoz Skraba} 
\address{School of Mathematical Sciences, Queen Mary University of London, London.}
\email{p.skraba@qmul.ac.uk}

\author{D. Yogeshwaran}
\address{Theoretical Statistics and Mathematics Unit, Indian Statistical Institute, Bangalore.}
\email{d.yogesh@isibang.ac.in}

\keywords{ Poisson process, Delaunay complex, Minimal spanning acycles, Persistence diagrams, Stabilization, Central limit theorem. }

\subjclass[2020]{60G55, 
60F05, 
60D05, 
60B99 
55U10 
}

\begin{abstract}
We investigate asymptotics for the minimal spanning acycles of the (Alpha)-Delaunay complex on a stationary Poisson process on $\mR^d, d \geq 2$. Minimal spanning acycles are topological (or higher-dimensional) generalization of minimal spanning trees. We establish a central limit theorem for total weight of the minimal spanning acycle on a Poisson-Delaunay complex. Our approach also allows us to establish central limit theorems for sum of birth times and lifetimes in the persistent diagram of the Delaunay complex. The key to our proof is in showing the so-called {\em weak stabilization} of minimal spanning acycles which proceeds by establishing suitable chain maps and uses matroidal properties of minimal spanning acycles. In contrast to the proof of weak-stabilization for Euclidean minimal spanning trees via percolation-theoretic estimates, our weak-stabilization proof is algebraic in nature and provides an alternative proof even in the case of minimal spanning trees. 
\end{abstract}

\maketitle

\section{Introduction}
\label{s:intro}

Let $\P_n = \{X_1,\ldots,X_{N_n}\}$ be the restriction of a stationary Poisson point process $\P$ with intensity $\lambda > 0$ to the window $W_n := \left[-\frac{n^{1/d}}{2},\frac{n^{1/d}}{2}\right]^d$. Equivalently, $N_n$ is a Poisson($\lambda n$) random variable and $X_i$'s are i.i.d. uniform points in $W_n$ independent of $N_n$. Consider the complete graph on $\P_n$ with edge-weights given by the Euclidean distance between the points. A classical problem at the intersection of combinatorial optimization and geometric probability is to investigate the total edge-lengths (or total weighted edge-length) of the minimal spanning tree on this weighted graph. Strong laws for this statistic and many such Euclidean optimization functionals were proven using subadditive ergodic theory methods; see \citet{Steele97,Yukich98}. For the case of sum of power-weighted edge-lengths, i.e., edge-lengths raised to $d$-th power, a strong law was also proven using local weak convergence ideas; see \citet{aldous1992asymptotics}. The central limit theorem for the same was remarked to be a difficult problem in \cite{aldous1992asymptotics}. It was solved independently by \citet{Alexander1995} and \citet{kesten1996central}. The former used an approach proposed by \citet{ramey1983non} in $d = 2$ while the latter used a martingale-difference central limit theorem which worked in all dimensions and for sum of weighted edge-lengths as well. Both  proofs, however,  use certain percolation-theoretic estimates. The aim of this article is to prove a central limit theorem (CLT) for a topological generalization of minimal spanning trees called  minimal spanning acycles on weighted Poisson-Delaunay complexes. The main challenge in  this  extension is the lack of percolation-theoretic analogue in higher dimensions. Strong laws for statistics of minimal spanning acycles follow from the results of \cite{divol2019choice} where sub-additive ergodic theory methods are used. \\

The rest of the article is organized as follows. In Section \ref{sec:Main_theorem}, we briefly introduce our model  and state our main results. We also discuss our proof strategy and compare our results with those in the existing literature. In Section \ref{s:prelims}, we introduce the necessary topological and probabilistic preliminaries. All our proofs are contained in Section \ref{s:proofs}. As it may be of independent interest and is more easily understandable, we give an overview of our proof for case of minimal spanning trees in Section \ref{s:proof_MST}. At the beginning of each section, we give a more detailed description of the subsections therein. Though we shall introduce all the necessary topological notions to understand all our theorem statements, some of the proofs (in particular those in Section \ref{s:stabchainmaps} and \ref{s:lem_del}) shall assume some basic knowledge of algebraic topology. 

\subsection{Model and main results}
\label{sec:Main_theorem}
For more formal definition of (simplicial) complexes, minimal spanning acycles and their relation to homology, we refer the reader to Section \ref{s:prelims}. Here, we will give a formal definition of Delaunay complexes but shall define minimal spanning acycles informally. This shall aid in an easier presentation of results.
\begin{figure}[tbp]
\centering\includegraphics[width=0.6\textwidth]{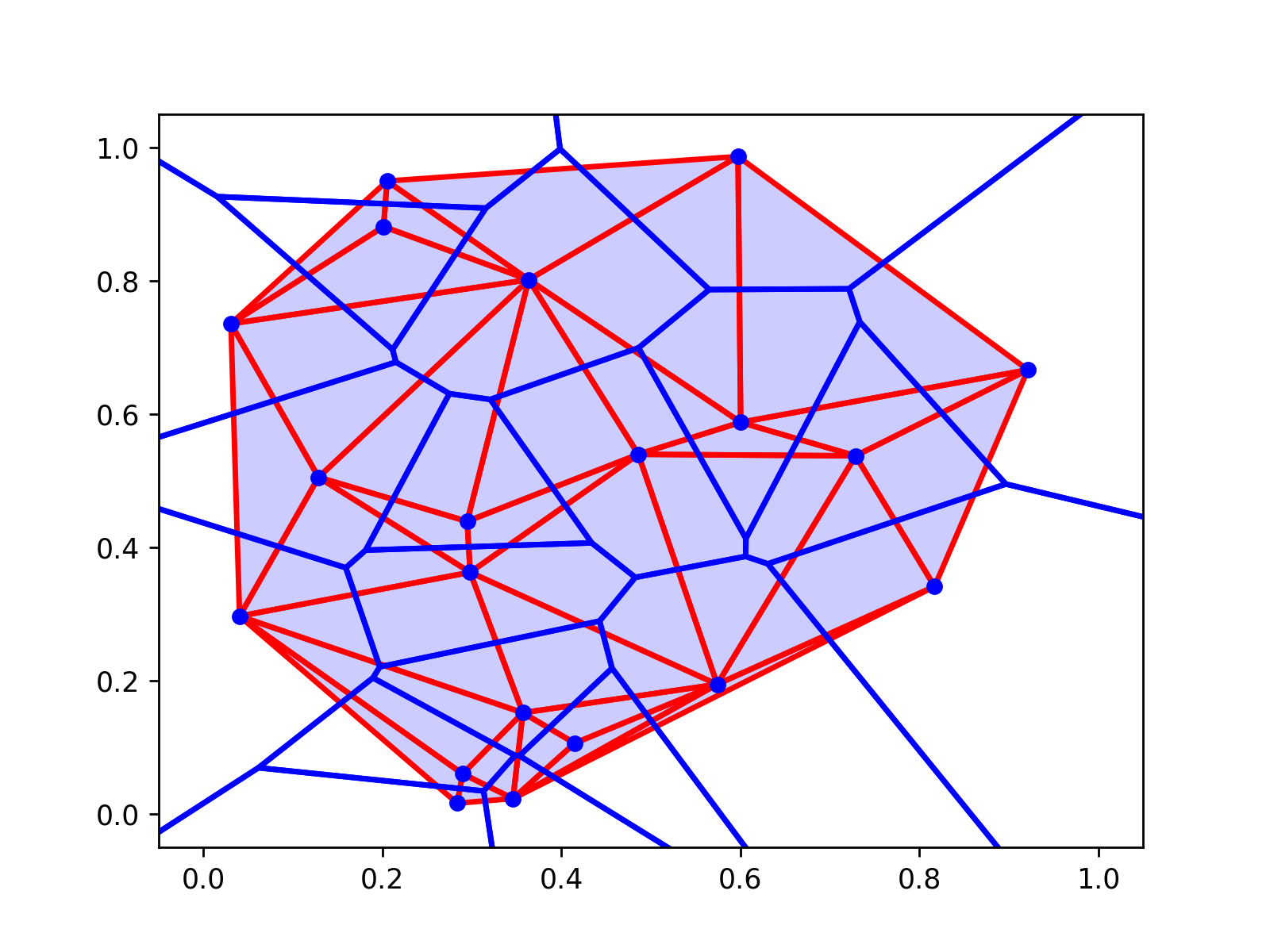}
\caption{\label{fig:voronoi} The Voronoi diagram (boundary of Voronoi cells are in blue) on a point-set (Poisson process) in $\mathbb{R}^2$ and the dual Delauany complex (with edges in red).}
\end{figure} 
\subsubsection*{\bf Delaunay Complex.} 
 Let $\X \subset \mR^d$ be a locally-finite point-set with points in general position. Recall that general position means that no three points are colinear and no $d+2$ points lie on a common sphere, i.e., are cocircular. We define the {\em Voronoi cell} of $x \in \X$ as
$$ \Vor_{\X}(x) := \{ y : |y-x| \leq |y-x'|, \forall x' \in \X \},$$
i.e., the set of points in $\mR^d$ whose closest point in $\X$ is $x$. Each $\Vor_{\X}(x)$ is a convex polyhedron and the collection $\{\Vor_{\X}(x)\}_{x \in \X}$ forms a partition of $\mR^d$ with disjoint interiors known as the {\em Voronoi diagram or tessellation}. See Figure~\ref{fig:voronoi} for an example.   

\begin{definition}[Delaunay Complex]
\label{def:delcomp}
Let $\X \subset \mR^d$ be a locally-finite point-set with points in general position. The {\em Delaunay complex} on $\X$, denoted as $\mcD(\X)$ is the following simplicial complex:
$$ \mcD(\X) := \cup_{k=0}^{\infty}\{ [x_0,\ldots,x_k] : \cap_{i=0}^k\Vor_{\X}(x_i) \neq \emptyset \}.$$
The element $[x_0,\ldots,x_k]$ is called a $k$-face of the Delaunay complex and the collection of $k$-faces of the Delaunay complex is denoted by $\mcF_k(\X)$. Generic faces of $\mcD(\X)$ is denoted by $\sigma, \tau$ et al. The Delaunay graph on $\X$ is the graph with vertex set $\X$ and edge-set given by $1$-faces. 
We define the weight function on the Delaunay complex as follows :
$$ w([x_0,\ldots,x_k]) = \inf \{ s : \cap_{i=0}^k B_s(x_i) \cap \Vor_{\X}(x_i) \neq \emptyset \}$$
where $B_s(x_i)$ denotes the ball of radius $s$ centered on $x_i$.
\end{definition}
It is a standard fact that if points are in general position, the Delanauy complex and the Voronoi diagram are dual to one another and that each simplex in the Delaunay complex corresponds to an empty circumsphere containing its vertices, with the weight being the radius of this sphere. The above weighted (simplicial) complex is also known as the Alpha-Delaunay complex. A closely related weight function 
$$ w'([x_0,\ldots,x_k]) = \inf \{ s : \cap_{i=0}^k B_s(x_i) \neq \emptyset \}, \, [x_0,\ldots,x_k] \in \mcD(\X)$$
gives rise to the Delaunay-\v{C}ech complex; see~\citet[Section 3]{bauer2017morse}. Our results apply to Delaunay-\v{C}ech complex as well but we restrict to Alpha-Delaunay complex for convenience, referring to it simply as the Delaunay complex.

The dimension of the Delaunay complex is the dimension of the largest face, which under the general position assumption, is the dimension of the ambient space - $d$. 

\subsubsection*{\bf Minimal spanning acycles.}
\begin{figure}
	\centering
	\begin{subfigure}[b]{0.3\textwidth}
		\centering
		\includegraphics[width=\textwidth]{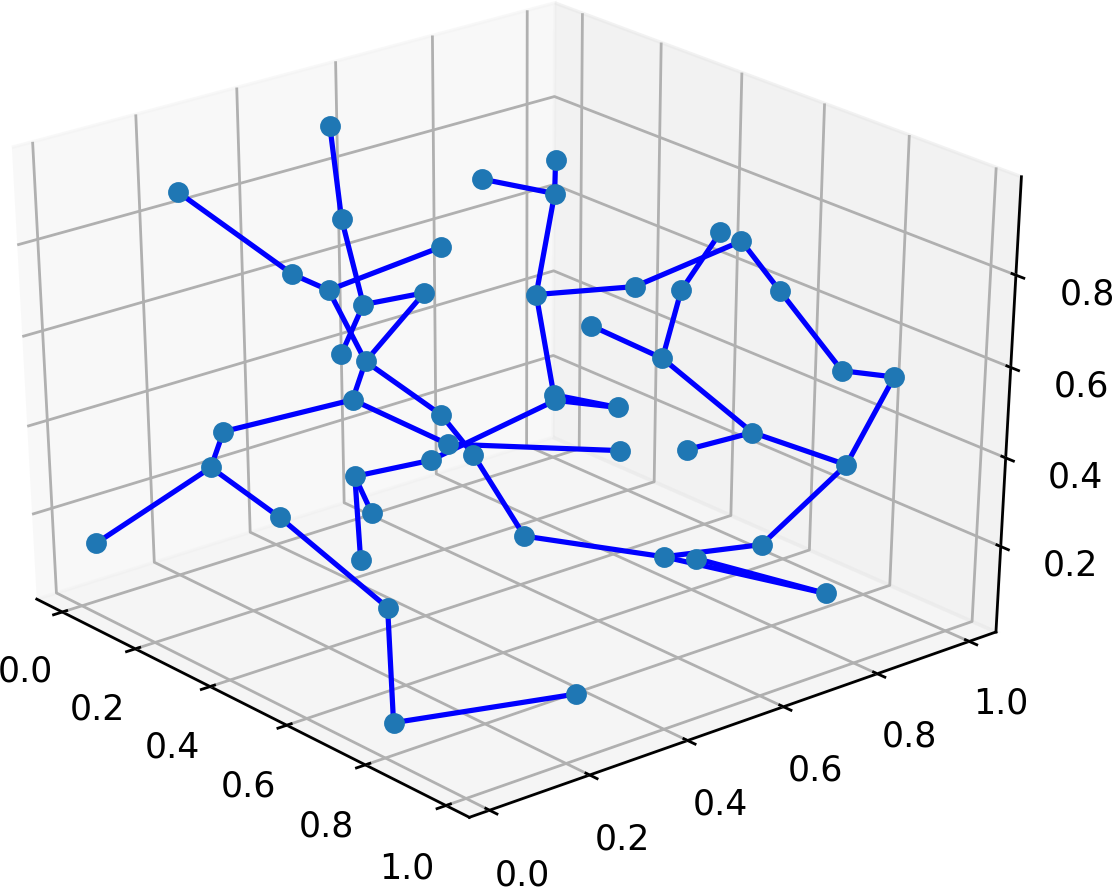}
		\caption{\label{fig:mst}}
	\end{subfigure}
	\hfill
	\begin{subfigure}[b]{0.3\textwidth}
		\centering
		\includegraphics[width=\textwidth]{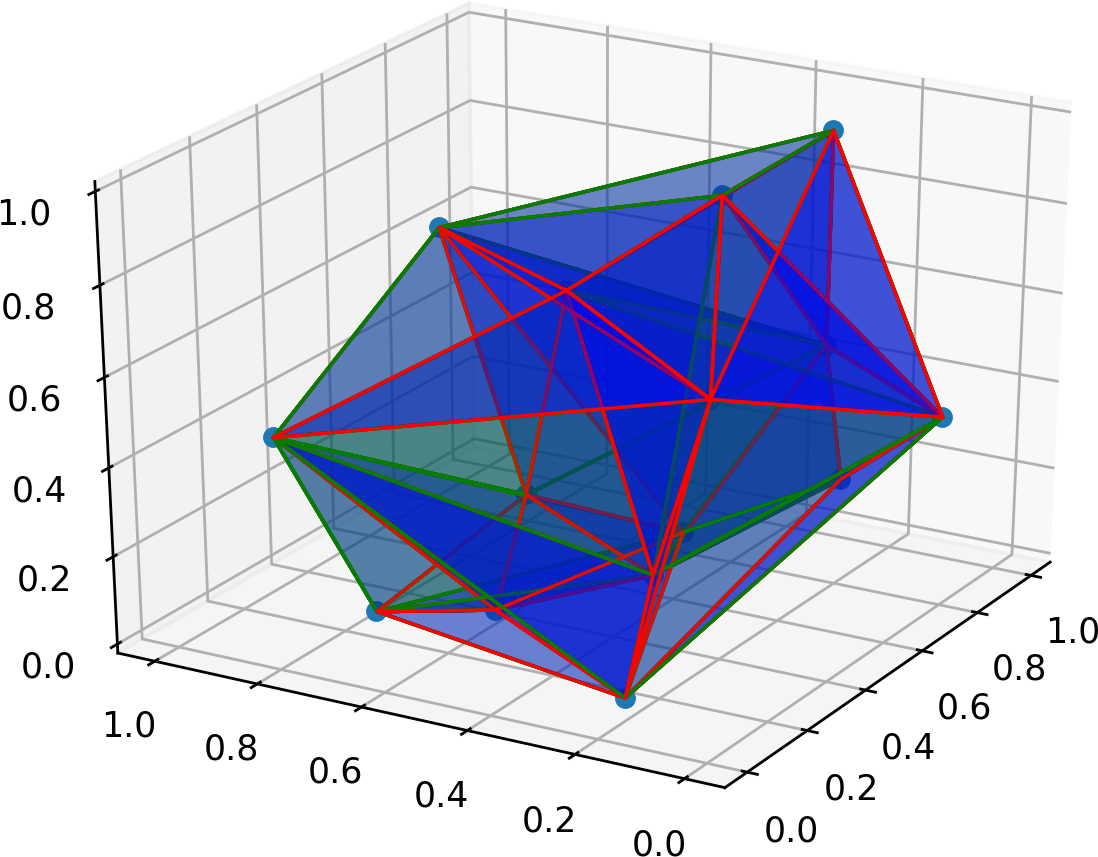}
		\caption{\label{fig:msa1}}
	\end{subfigure}
	\hfill
	\begin{subfigure}[b]{0.3\textwidth}
		\centering
		\includegraphics[width=\textwidth]{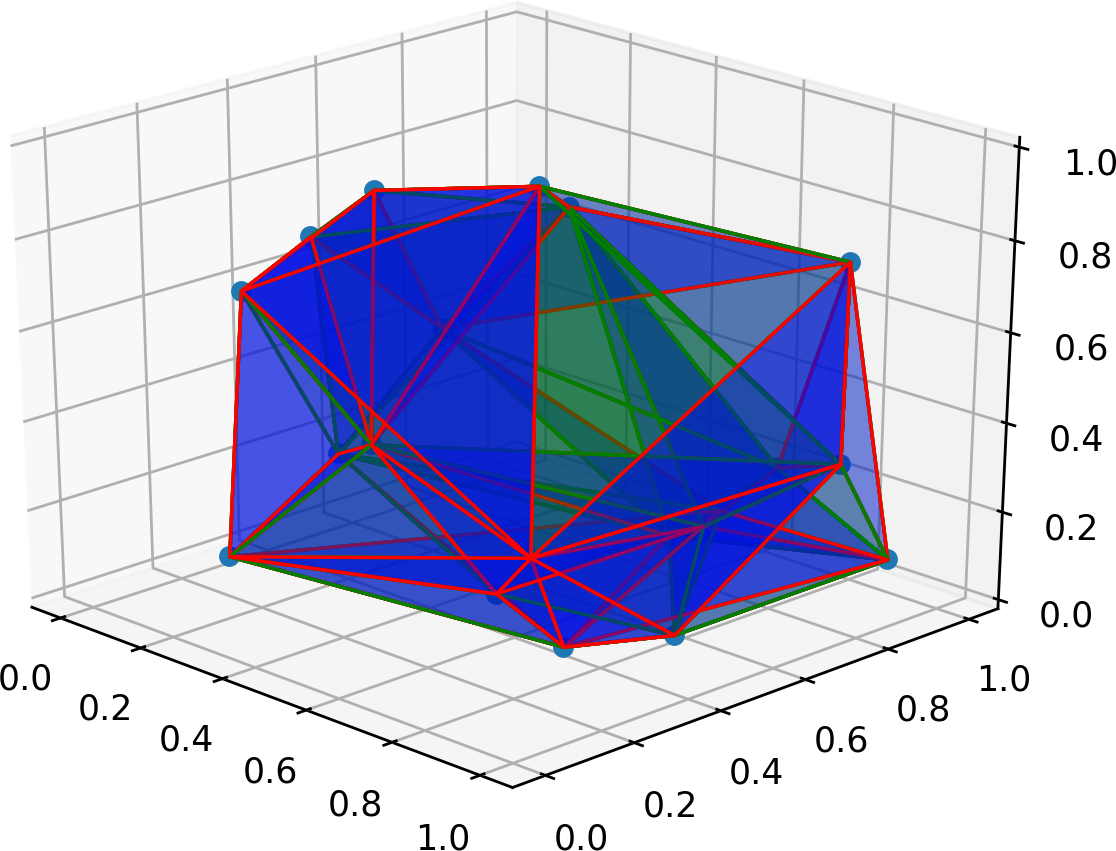}
		\caption{\label{fig:msa2}}
	\end{subfigure}
	   \caption{ In  $\mathbb{R}^3$ (A) a minimum spanning tree  and (B,C) two examples of MSAs.  In the latter, the blue faces are in MSA and the green faces are not in MSA. Both the figures correspond to Delaunay complexes on the Poisson process.}
	   \label{fig:msas}
\end{figure}
We now introduce spanning acycles and minimal spanning acycles (MSA) on a Delaunay complex informally. Set the $0$-spanning acycle to be empty set. A {\em $1$-spanning acycle} on the Delaunay complex is simply a spanning tree on the Delaunay graph. Recall that a spanning tree is a collection of edges that does not contain any cycles, i.e., does not enclose any $1$-dimensional holes, and is connected. See Figure \ref{fig:msas}(A) for simulation of a minimal spanning tree on a Delaunay graph when $\X$ is a Poisson process in $\mR^3$. A {\em $k$-spanning acycle} is a collection of $k$-faces of the Delaunay complex that do not enclose $k$-dimensional holes and also fills in (i.e. bounds) all possible $(k-1)$-dimensional holes enclosed by all the $(k-1)$-faces, i.e., $\mcF_{k-1}(\X)$. For example, when $k = d$, one cannot create any $d$-dimensional holes and to fill in all $(d-1)$-dimensional holes, all the $d$-faces must be included and so the spanning acycle is $\mcF_d(\X)$. 
%
%
Given a spanning acycle (or any collection of faces), one can define the weight of the spanning acycle to be the sum of weights of faces in the spanning acycle i.e., if $M \subset \mcF_k(\X)$ is a $k$-spanning acycle, then 
\[ w(M) := \sum_{\sigma \in M} w(\sigma).\]
Immediately, one can define {\em a minimal spanning acycle} as a spanning acycle with minimum weight. If the weights on faces (of a fixed dimension $k$) are unique, which is the case when $\X$ is in general position, then there is a unique minimal $k$-spanning acycle (MSA). When unique, we denote the MSA on $\mcD(\X)$ by $MSA(\X)$ with the dependence on $k$ suppressed for convenience. Figures ~\ref{fig:msas}(B,C) shows two simulations of $2$-MSAs when $\X$ is a Poisson process in $\mR^3$.

Though the concept of spanning acycles was introduced by \citet{kalai1983enumeration} in 1983, minimal spanning acycles have only received attention in recent years \cite{edelsbrunner2020tri} and especially in the context of random simplicial complexes \cite{hiraoka2017minimum,Skraba17,hino2019asymptotic,kanazawa2021law}.

\subsubsection*{\bf Poisson-Delaunay Complexes.} The object of interest to us are MSAs on the Poisson-Delaunay complex. We take $\X = \P$, the stationary Poisson point process of intensity $\lambda > 0$ and consider the Delaunay complex $\mcD(\P)$. More specifically, we set $\P_n := \P \cap W_n$, the restriction of the stationary Poisson process to the window $W_n = [\frac{-n^{1/d}}{2}, \frac{n^{1/d}}{2}]$ and investigate the asymptotic behaviour for $k$-minimal spanning acycles on the sequence of Delaunay complexes $\mcD(\P_n)$. Since the face-weights for a fixed dimension $k$ are a.s. distinct, we have that $\MSA(\P_n) := \MSA(\mcD(\P_n))$ is a.s. unique by \citet[Lemma 25]{Skraba17}. The statistic of interest for us is {\em the sum of face-weights} defined as
\begin{equation}
\label{e:mkphi}
\pM_k(\P_n) = \pM_k(\mcD(\P_n)) := \sum_{\sigma \in \MSA(\P_n)} \phi(w(\sigma)),
\end{equation}
where $\phi : \mR_+ \to \mR_+$ is a strictly increasing function. We denote $\pM_k$ by $\pMi_k$ when $\phi$ is identity. From the relation between persistent homology and minimal spanning acycles (see Theorem \ref{t:birthdeathmsa}), we know that $\pMi_k(\P_n)$ is the sum of death times in the persistence diagram of $\mcD(\P_n)$. With this connection, it is natural also to study sum of birth times as well a sum of lifetimes in a persistence diagram. We define them as
\begin{align}
\label{e:bklkphi}
\pB_k(\P_n) &:= \pB_k(\mcD(\P_n)) = \smashoperator{\sum_{ \sigma \in \mcF_k(\P_n) \setminus MSA(\P_n) }} \phi(w(\sigma)) \\
\pL_k(\P_n) &:= \pL_k(\mcD(\P_n)) = \pMi_k(\P_n) - \pBi_{k-1}(\P_n). 
\end{align}
Note that trivially $B_0(\P_n) = 0$ as $\mcF_0(\P_n) = \P_n, \MSA_0(\P_n) = \emptyset$ and $w(v) = 0$ for all $v \in \P_n$. Thus, $\pL_1(\P_n) = \pMi_1(\P_n)$ (the total edge-length of minimal spanning tree). 

Strong laws for $\pB_k(\P_n)$ and $\pL_k(\P_n)$ can be deduced from strong laws for $\pM_k(\P_n)$ and $\pF_k(\P_n)$. Strong laws and central limit theorem for $\pF_k(\P_n) := \sum_{\sigma \in \mcF_k(\P_n)}\phi(w(\sigma))$ can be deduced from \citet[Theorem 3.2]{penrose2002limit} and \citet[Theorem 2.1]{Penrose01}. Strong laws for $\pM_k(\P_n)$ follow from \citet[Theorem 6]{divol2019choice}. Hence, we focus on central limit theorems for $\pB_k(\P_n)$, $\pM_k(\P_n)$ and $\pL_k(\P_n)$.

\subsubsection*{\bf Weak Stabilization and central limit theorem.} Our first main result is to show a weak stabilization result for sum of death times, birth times and lifetimes of  persistent diagrams of weighted Poisson-Delaunay complexes. 

Let $A_n$ be sequence of boxes (i.e., $A_n = W_m + x$ for some $m \geq 1$ and $x \in \mR^d$) such that $A_n \to \mR^d$; see Section \ref{sec:weak_stab} for precise definition of boxes ($\mathfrak{A}$) and convergence of sets. 
\begin{prop}
\label{prop:stab_death_times}
Let $1 \leq k \leq d$ and $\phi : \mR_+ \to \mR_+$ be a strictly increasing function. Let $A_n$ be defined as above and $\P_n = \P \cap A_n$. All three statistics - $\pM_k(\P_n), \pB_k(\P_n)$, and $\pL_k(\P_n)$ - satisfy weak stabilization as in \eqref{eqn:weakstab-rob} i.e., as $n \to \infty$
\begin{equation}
\label{e:WstabBn}
D_{\0} \pM_k(\P_n) := \pM_k(\P_n \cup \{0\}) - \pM_k(\P_n) \to D_{\infty}(\pM_k), \, \, \mbox{a.s.}, \quad 
\end{equation}
where $D_{\infty}(\pM_k)$ is an a.s. finite random variable and similarly for $\pB_k(\P_n)$ and $\pL_k(\P_n)$.
\end{prop}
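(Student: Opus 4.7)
The plan is to prove weak stabilization of $\pM_k$ by an algebraic/matroidal argument, and then deduce the other two cases via additive identities together with known stabilization of sums of face-weights. Writing $\pF_k(\P_n) := \sum_{\sigma \in \mcF_k(\P_n)} \phi(w(\sigma))$ one has $\pF_k = \pM_k + \pB_k$ by definition, and $\pF_k$ satisfies weak stabilization by the classical Poisson functional results of \cite{Penrose01,penrose2002limit}. Thus once $D_{\0}\pM_k(\P_n) \to D_{\infty}(\pM_k)$ almost surely, we immediately obtain $D_{\0}\pB_k(\P_n) = D_{\0}\pF_k(\P_n) - D_{\0}\pM_k(\P_n) \to D_{\infty}(\pF_k) - D_{\infty}(\pM_k)$, and then $D_{\0}\pL_k(\P_n) = D_{\0}\pMi_k(\P_n) - D_{\0}\pBi_{k-1}(\P_n)$ stabilizes by applying the previous two statements with $\phi$ replaced by the identity.

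For $\pM_k$, the first step is to identify the faces of the Delaunay complex affected by inserting $0$: a face $\sigma$ of $\mcD(\P_n)$ is destroyed in $\mcD(\P_n \cup \{0\})$ precisely when $0$ enters its (previously empty) circumscribing ball, and faces of $\mcD(\P_n \cup \{0\})$ are newly created exactly when their circumscribing ball becomes empty only after inserting $0$. Equivalently, all affected faces are incident to the Voronoi cell of $0$ in $\mcD(\P_n \cup \{0\})$. For the Poisson process in $\mR^d$ with $d \geq 2$, this cell is almost surely bounded in the infinite complex $\mcD(\P \cup \{0\})$, and because $A_n$ exhausts $\mR^d$ there exists (almost surely) an $n_0$ beyond which the cell agrees in $\mcD(\P_n \cup \{0\})$ and $\mcD(\P \cup \{0\})$. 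Consequently the symmetric difference between $\mcF_\cdot(\P_n)$ and $\mcF_\cdot(\P_n \cup \{0\})$ is eventually constant and finite.

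The heart of the argument is to upgrade this local stability of the Delaunay complex into stability of the MSA. I would invoke the matroid-exchange property of $k$-spanning acycles (\cite{kalai1983enumeration}, see also \cite[Lemma~25]{Skraba17}) to package the symmetric difference $\MSA(\P_n \cup \{0\}) \triangle \MSA(\P_n)$ as a finite collection of canonical basis exchanges, each certified by a minimal circuit. Phrasing these exchanges via an explicit chain map between the simplicial chain complexes of $\mcD(\P_n)$ and $\mcD(\P_n \cup \{0\})$ produces a pairing $(\sigma,\tau)$ of entering and leaving faces, so that $D_{\0}\pM_k(\P_n) = \sum_{(\sigma,\tau)} \bigl(\phi(w(\sigma)) - \phi(w(\tau))\bigr)$ as a finite signed sum. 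Once the chain map stabilizes together with the local Delaunay structure, this sum converges almost surely to the analogous finite difference between $\mcD(\P)$ and $\mcD(\P \cup \{0\})$, yielding the a.s.\ finite limit $D_{\infty}(\pM_k)$.

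The hardest part will be proving that each matroidal exchange is genuinely local: a priori, a new face near $0$ could, through a cascade of MSA exchanges, force a distant face out of the MSA. The resolution should be that the certifying circuit for each exchange is supported inside the stabilized finite neighborhood of $0$, because the filtered Delaunay complex changes only inside that neighborhood and the matroid basis is determined by the filtration. Making this rigorous via the chain-map construction---and controlling both the pairings and their supports---is the technical core, carried out in Sections~\ref{s:stabchainmaps} and~\ref{s:lem_del}.
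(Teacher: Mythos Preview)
Your reduction to $\pM_k$ via $\pB_k = \pF_k - \pM_k$ and $\pL_k = \pMi_k - \pBi_{k-1}$ is exactly what the paper does, and your identification of the stabilization of the Delaunay symmetric difference (the paper's Proposition~\ref{prop:const_diff}) is correct. The decomposition of $D_{\0}\pM_k$ into a finite sequence of single-simplex changes is likewise the right first move.

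There is, however, a genuine gap in the core step. You assert that ``the certifying circuit for each exchange is supported inside the stabilized finite neighborhood of $0$'', so that the pairing $(\sigma,\tau)$ eventually stabilizes as a set of faces and the add-one cost equals the corresponding difference on $\mcD(\P)$. This is false in general. Already for $k=1$ (the MST), adding an edge $\sigma_j$ near $0$ creates a cycle whose heaviest edge $\tau^j_n \in \MSA(\mcK^{j-1}_n)\setminus \MSA(\mcK^j_n)$ can lie arbitrarily far from $0$ and can genuinely change with $n$ forever. That the filtered Delaunay complex changes only near $0$ does not force the MSA exchange to be near $0$: the matroid circuit through $\sigma_j$ is a global object, and ``the matroid basis is determined by the filtration'' says nothing about the support of circuits.

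The paper's resolution is not locality but \emph{monotonicity of the exchanged weight}. The relevant chain maps (Lemma~\ref{lem:chainKnm}) are not between $\mcD(\P_n)$ and $\mcD(\P_n\cup\{0\})$ for fixed $n$, as you describe, but between the intermediate complexes $\mcK^j_n$ and $\mcK^j_m$ for $n<m$; their role is to compare across window sizes. Through Lemma~\ref{lem:stab_negfac} these maps yield that, although $\tau^j_n$ may vary with $n$, the weight $w(\tau^j_n)$ is nonincreasing in $n$. Hence $\phi(w(\sigma_j)) - \phi(w(\tau^j_n))$ converges as a bounded monotone sequence, not because $\tau^j_n$ eventually equals a fixed face of $\mcD(\P)$. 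This monotonicity argument (Case~(ii)(b) in the proof of Proposition~\ref{prop:stabilization_D}, and then the sandwiching for general $A_n$ in the proof of Proposition~\ref{prop:stab_death_times}) is precisely the content your sketch is missing.
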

\citet{kesten1996central} introduced the the notion of weak stabilization to prove a central limit theorem for Euclidean minimal spanning trees and this was abstracted to the above form of weak stabilization for general Poisson functionals in \citet{Penrose01}. This was further improved by \citet{trinh2019central}; see Theorem \ref{thm:clt_Duy17}. We now state our central limit theorem for the sum of birth times, death times and lifetimes. We will need to impose a growth assumption on the weight function $\phi$ to verify certain moment conditions.
\begin{theorem}
\label{thm:clt_msa}
Let $\phi : \mR_+ \to \mR_+$ be a strictly increasing function such that $\phi(t) \leq Ct^p, t \geq 0$ for some $p > 0$. Then, there exist constants $ \sigma^2(\pMi_k,\phi) \in (0,\infty), 1 \leq k \leq d$ and $\sigma^2(\pBi_k,\phi), \sigma^2(\pL_k) \in (0,\infty),$ for $1 \leq k \leq d-1$ such that the following hold. As $n \to \infty$, for $1 \leq k \leq d$,
\[ n^{-1}\VAR{\pM_k(\P_n)} \to \sigma^2(\pMi_k,\phi), \, \, n^{-1/2}\left[\pM_k(\P_n) - \EXP{\pM_k(\P_n)}\right]\ {\Rar}\ N(0,\sigma^2(\pMi_k,\phi)),\] 
and for $1 \leq k \leq d-1$,
\[ n^{-1}\VAR{\pB_k(\P_n)} \to \sigma^2(\pBi_k,\phi), \, \, n^{-1/2}\left[\pB_k(\P_n) - \EXP{\pB_k(\P_n)}\right]\ {\Rar}\ N(0,\sigma^2(\pBi_k,\phi)), \]
and
\[ n^{-1}\VAR{\pL_k(\P_n)} \to \sigma^2(\pL_k), \, \, n^{-1/2}\left[\pL_k(\P_n) - \EXP{\pL_k(\P_n)}\right]\ {\Rar}\ N(0,\sigma^2(\pL_k)), \]
where $\Rar$ denotes convergence in distribution and $N(0,\sigma^2)$ denotes the Normal random variable with mean $0$ and variance $\sigma^2 \in [0,\infty)$.
\end{theorem}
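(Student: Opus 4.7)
The plan is to invoke the abstract Poisson central limit theorem of Trinh, stated below as Theorem \ref{thm:clt_Duy17}, which converts weak stabilization together with a uniform $(2+\delta)$-moment bound on the add-one cost into both variance convergence and asymptotic normality. Since Proposition \ref{prop:stab_death_times} already delivers weak stabilization for each of $\pM_k(\P_n), \pB_k(\P_n), \pL_k(\P_n)$, the proof reduces to two tasks: (i) verifying a uniform moment bound for the add-one costs $D_{\mathbf{0}} \pM_k(\P_n), D_{\mathbf{0}} \pB_k(\P_n), D_{\mathbf{0}} \pL_k(\P_n)$, and (ii) establishing strict positivity of the three limit variances $\sigma^2(\pMi_k,\phi), \sigma^2(\pBi_k,\phi), \sigma^2(\pL_k)$.

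The moment bound (i) is the easier task. Inserting $\mathbf{0}$ into $\P_n$ only perturbs the Delaunay complex inside the Voronoi neighborhood of $\mathbf{0}$ together with simplices of $\mcD(\P_n)$ whose Voronoi dual meets $\Vor_{\P_n \cup \{\mathbf{0}\}}(\mathbf{0})$. Letting $R = R(\mathbf{0}, \P_n)$ denote the radius of the smallest ball containing this region of influence, classical tail estimates for Poisson--Voronoi tessellations yield that $\mathbb{P}\{R > r\}$ decays faster than any polynomial in $r$, uniformly in $n$. Every affected simplex has circumradius bounded by a constant multiple of $R$, and the number of such simplices is also polynomial in $R$; together with the hypothesis $\phi(t) \leq C t^p$ this gives $\sup_n \mathbb{E}[|D_{\mathbf{0}} H(\P_n)|^q] < \infty$ for every $q \geq 1$ and every $H \in \{\pM_k, \pB_k, \pL_k\}$, well beyond what Trinh's hypothesis requires.

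For positivity of the variance I would follow the Kesten--Alexander--Penrose strategy: to get $\VAR{H(\P_n)} \geq c n$ it suffices to exhibit two disjoint local configurations $E_1, E_2 \subset B(\mathbf{0},r)$, each of positive probability, for which the conditional limit $D_{\infty}(H)$ given $\P \setminus B(\mathbf{0},r)$ differs by a deterministic positive amount. The matroidal exchange property of spanning acycles --- the very ingredient underlying the weak-stabilization proof of Proposition \ref{prop:stab_death_times} --- allows one to track how perturbations in $B(\mathbf{0},r)$ propagate to the MSA weight without being globally absorbed. A martingale-difference/Efron--Stein lower bound then converts this into the desired variance lower bound. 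This is the step I expect to be the main obstacle: the effect of a local perturbation on the MSA weight is intrinsically nonlocal, and for $\pB_k$ and $\pL_k$ one must engineer $E_1, E_2$ so that birth-side and death-side contributions do not conspire to cancel, rather than simply borrowing the bound from $\pM_k$ via the identity $\pL_k = \pMi_k - \pBi_{k-1}$.
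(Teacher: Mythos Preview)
Your overall framework is correct and matches the paper: apply Theorem \ref{thm:clt_Duy17}, with weak stabilization supplied by Proposition \ref{prop:stab_death_times}, and then verify the moment bound and variance positivity separately. Your moment argument is also essentially the paper's: Proposition \ref{prop:bddmomentsA} records precisely the tail estimates you describe (with exponential rather than merely super-polynomial decay), and the bound $|D_{\mathbf{0}}\pM_k| \leq \text{(number of changed simplices)} \times \phi(\text{max changed weight})$ is obtained via the stability result Theorem \ref{t:stabSTY17}. You gloss over this last step; it is worth making explicit, since without it one only knows that finitely many \emph{simplices} change, not that the global MSA weight changes by a controlled amount.

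The genuine difference is in the variance lower bound. You propose the classical Kesten--Alexander--Penrose two-configuration argument, and you correctly flag this as the main obstacle. The paper avoids this entirely: the last clause of Theorem \ref{thm:clt_Duy17} (from \cite{trinh2022random}) states that $\sigma^2 > 0$ as soon as $D_{\infty}(H)$ is a nonzero random variable, i.e.\ $\mathbb{P}(D_{\infty}(H) \neq 0) > 0$. So only \emph{one} configuration is needed, on which the add-one cost is bounded away from zero with positive probability. The paper constructs an explicit $(2d+2)$-point configuration in $B_r(\mathbf{0})$ (a regular $d$-simplex on the unit sphere plus $d+1$ outer points) such that, whenever $\P \cap B_{3r}(\mathbf{0})$ lies in small neighborhoods of these points, $D_{\mathbf{0}}\pM_k(\P_n) > \phi(r/4)$ deterministically; this event has positive probability independent of $n$, and the conclusion follows. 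Your approach would also work but is strictly harder, and the cancellation concern you raise for $\pB_k$ and $\pL_k$ is handled in the paper by the same single-configuration construction (the added simplices contribute both a new positive and a new negative face in each dimension $k<d$).
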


For $k = 1$, since $\pM_1$ is the $\phi$-weighted sum of edges on a minimal spanning tree, weak stabilization and central limit theorem follow from the results of \citet{kesten1996central} (see also \citet[Proposition 1]{lee1997central}, \citet[Proposition 1]{lee1999central} and \citet[Lemma 2.1]{penrose2003weak}), but these proofs use percolation properties of the the Poisson-Boolean model $\cup_{X \in \P}B_r(X), r \geq 0$. With the study of homological percolation still in a nascent stage (see \citet{skraba2020homological}), proof of weak stabilization in higher-dimensions necessitates a different approach. Our proof relies mainly upon local-finiteness of the weighted Delaunay graph, existence of certain homomorphisms between chain complexes and also implicitly that minimal spanning acycles can be constructed via greedy algorithms. We discuss the proof in more detail at the end of this section. Also to follow our proof ideas better, we give a proof overview in the case of minimal spanning tree in Section \ref{s:proof_MST} where we consider $\mcD \cap \P_n$ instead of $\mcD(\P_n)$. The difference between $\mcD \cap \P_n$ and $\mcD(\P_n)$ lies in the boundary effects. While this doesn't significantly affect the percolation theoretic approach, our approach necessitates the use of suitable chain homomorphisms to overcome the boundary effects. In the case of $\mcD \cap \P_n$, one can use combinatorial arguments to deduce stabilization of the MSA,  but for $\mcD(\P_n)$, simplices may disappear with increasing window size. So rather than rely on monotoncity, we resort to the existence of certain chain maps (Section \ref{s:lem_del}), which requires  more algebraic arguments (see Section \ref{s:stabchainmaps}). 

The main advantage of our approach is that it requires no change when going from minimal spanning trees to minimal spanning acycles. We use the randomness of the point process in controlling the stabilization of $\mcD$. This can be extended to fairly general point processes, see \citet[Theorem 2.5]{BYY2019} and the discussion therein. However, the central limit theorem which relies heavily on Theorem \ref{thm:clt_Duy17}, does require the Poisson point process assumption. 

We now comment on the use of weak stabilization in random topology and connections to our result. Weak stabilization has been a crucial tool to prove central limit theorems for topological statistics in recent years. This is mainly because other stronger notions of stabilization are insufficient to handle long-range dependence of topological statistics especially in the presence of percolation in the Poisson-Boolean model. In \citet{Yogesh17}, weak stabilization was shown for Betti numbers and later this was extended to persistent Betti numbers in \citet{Hiraoka2018limit, krebs2019asymptotic}. The statistics of minimal spanning acycle have more global dependence than Betti numbers and persistent Betti numbers and hence the methods do not naturally extend to minimal spanning acycles. The relation between Betti numbers and total lifetime sum ($\pL_k$) can be exploited to prove weak stabilization of $\pL_k$ as in the case of random cubical complexes in \citet{Hiraoka2018cub}. But this does not extend to the weight of a minimal spanning acycle (equivalently death times in the persistence diagram). In all these works, weak stabilization was shown en-route to the proof of central limit theorem for these statistics. Proposition \ref{prop:stab_death_times} and Theorem \ref{thm:clt_msa} significantly extend these results by proving stabilization not only for lifetime sums but also more generally for weight of a minimal spanning acycle and its complement. Recently strong stabilization has been shown for persistent Betti numbers in \cite{krebs2019asymptotic} but it is unclear whether this can be extended to minimal spanning acycles.

We now mention some possible extensions and related results in non-geometric or mean-field models. Quantifying the convergence in \eqref{e:WstabBn} via percolation-theoretic estimates for the Poisson-Boolean model has been used to prove normal approximation results for Euclidean minimal spanning trees in \citet{chatterjee2017minimal,lachieze2020quantitative}. Though one would expect this to  hold for minimal spanning acycles, a proof is far from obvious at this stage. The results of \cite{krebs2019asymptotic} may be considered a first step towards such a quantification and in also extending  central limit theorems in \cite{Yogesh17,Hiraoka2018limit} to inhomogeneous Poisson processes and Binomial point processes. 

In a different direction, central limit theorem for minimal spanning trees on randomly weighted complete graphs was proven in \citet{janson1995minimal}. Strong laws for lifetime sums have been shown for simplicial complex versions of randomly weighted complete graphs in \citet{hino2019asymptotic} using the relation between Betti numbers and lifetime sums. A central limit theorem still evades us in these models and again the stabilization-based approach of \citet{cao2021central} offers promise. 

\subsubsection*{\bf Discussion of proofs:} We end the introduction with a detailed sketch of our proofs. As is obvious from the above discussions, the key result is Proposition \ref{prop:stab_death_times} and then applying Theorem \ref{thm:clt_Duy17}, we obtain the central limit theorem - Theorem \ref{thm:clt_msa}. The crucial step towards the proof of Proposition \ref{prop:stab_death_times} is in understanding stabilization when $A_n = W_n$ i.e., a sequence of centred boxes. Proposition \ref{prop:stabilization_D} proves stabilization in this case and we shall outline here this proof alone. 

One of the technical difficulties in proving stabilization along windows $W_n$ (i.e., Proposition \ref{prop:stabilization_D}) lies in overcoming the boundary effects in the Delaunay complex. More precisely, it is easier to show weak stabilization for $\mcD(\P) \cap W_n$ but what is required is weak stabilization of $\mcD(\P_n)$. The former sequence of complexes satisfy monotonicity in $n$ but the latter do not. We overcome this via by passing to the nerve of covers induced by the Voronoi diagrams and using chain maps between $\mcD(\P_n)$ and $\mcD(\P_m)$ for $n < m$. 

The starting point of our proof strategy is to decompose addition of the origin into a.s. a finite number of changes in the complex locally around the origin. By showing that the effect of these elementary changes stabilizes for Delaunay complexes, we show that the effect of adding the origin stabilizes. More precisely, denoting by $\mcD$ the Delaunay complex on $\P$ and $\mrD$ the Delaunay complex on $\P \cup \{\0\}$, we show the following statements (either a.s. or pointwise):
\begin{enumerate}
\item The removal and addition of simplices to go from $\mcD$ to $\mrD$ happens in $W_N$ for some $N$ (random but a.s. finite); Proposition \ref{prop:const_diff}. 

\item Thus for $m > N$, $\pM_k(\mcD(P_m \cup \{\0\})) - \pM_k(\mcD(\P_m))$ can be expressed as sum of finite differences of the form $\pM_k(\mcD'_m) - \pM_k(\mcD_m)$ such that $\mcD'_m, \mcD_m$ are obtained from $\mcD(\P_m)$ or $\mcD(P_m \cup \{\0\})$ by the removal of finitely many simplices and furthermore $\mcD'_m$ and $\mcD_m$ differ by at most one simplex. 

\item Though there is no inclusion between $\mcD'_m, \mcD'_n$ for $n > m$ (resp.  $\mcD_m, \mcD_n$),  we show existence of appropriate chain maps between $\mcD'_m$ and $\mcD'_n$
(resp. between $\mcD_m$ and $\mcD_n$); see Section \ref{s:lem_del}.

\item Then, using a series of lemmas (Section \ref{s:stabchainmaps}) that analyse stabilization under chain maps when one simplex is added or removed, we show in Proposition \ref{prop:stabilization_D} that $\pM(\mcD'_m) - \pM(\mcD_m)$ converges as $m \to \infty$.

\item To show that the limiting variance is non-degenerate,  we show that limit of $\mcD(P_m \cup \{\0\}) - \mcD(\P_m)$ is a non-zero random variable by constructing some specific configurations for which the limiting random variable is strictly greater than zero with positive probability; see Section \ref{s:varlb}.
\end{enumerate}
For ease of understanding the main points of the above proof strategy and for the benefit of the readers interested in the case of minimal spanning trees, we give a 
proof overview for the special case of $k = 1$ and where $\phi$ is identity 
in Section \ref{s:proof_MST}.

Also, as mentioned below Definition \ref{def:delcomp}, our proof also works for Delaunay-\v{C}ech complex. All the above steps follow straightforwardly for the Delaunay-\v{C}ech case except that of variance lower bound proof. The latter uses specific constructions which must be suitably modified (see Remark \ref{r:configdc}).

\section{Preliminaries}
\label{s:prelims}

Weighted complexes, spanning acycles and connection to persistent homology are introduced respectively in Sections \ref{sec:weightsimpcompl}, \ref{sec:msaprelim} and \ref{sec:pers_homology}. Section \ref{sec:weak_stab} introduces Poisson process and recalls the general central limit theorem for functionals of Poisson process from \citet{trinh2019central}.

For more on algebraic topology notions considered in this paper, we refer the reader to~\citet{Edelsbrunner10,Munkres84,Hatcher02}.

\subsection{Weighted complexes}
\label{sec:weightsimpcompl}

We introduce weighted (simplicial) complexes and the associated filtration. A {\em (simplicial) complex} $\mcK$ on a vertex set $V$ is a collection of subsets of $\mcK$ such that if $\sigma \in \mcK$ and $\tau \subset \sigma$ then $\tau \in \mcK.$ We shall assume that $V \subset \mcK$. Elements of $\mcK$ are called {\em faces or simplices} and faces of cardinality $(k+1)$ are called as $k$-dimensional faces or simplices. We represent a $k$-dimensional face $\tau$ by $[v_0,\ldots,v_k]$ where $v_i \in V$, $i = 0,\ldots,k$. A {\em weighted complex} is a complex equipped with a weight function $w: \mcK \rightarrow \mR$. We shall assume that all our weight functions on a simplicial complex are {\em monotone} i.e., for all $\alpha \in \mR$, the preimage $w^{-1}(-\infty,\alpha]$ is a simplicial complex. The sub-complex of $\mcK$ consisting of only $k$-faces and lower is called {\em the $k$-skeleton} $\mcK_k$ i.e., $\mcK_k = \cup_{i=0}^k \mcF_i(\mcK)$. 
Given a weighted complex $\mcK$, we can define the following two filtrations :
\[ \mcK(t) := \{ \sigma \in \mcK : w(\sigma) \leq t \}, \, \, \mcK(t^-) := \{ \sigma \in \mcK : w(\sigma) < t \}, \, \, t \geq 0. \]
Also, given two weighted complexes $\mcK, \mcL$ with respective weight functions $w,w'$, we say $\mcK \subset \mcL$ if $\mcK$ is a subcomplex of $\mcL$ (i.e, the simplices of $\mcK$ are also in $\mcL$) and $w(\tau) = w'(\tau)$ for all $\tau \in \mcK$. For a weighted complex $\mcK$ and $\sigma$, a simplex, we denote by $\mcK(\sigma^-) = \mcK(w(\sigma)^-).$

Given a simplicial complex $\mcK$ and $k \geq 0$, let $C_k(\mcK), B_k(\mcK)$ and $Z_k(\mcK)$ denote the {\em $k$th-chain group}, {\em $k$-th boundary group} and {\em $k$-th cycle group} respectively\footnote{It will be clear from context whether the boundary group or a ball is denoted by $B$.}. These are defined via the boundary operator $\partial_k:C_k (\mcK)\rightarrow C_{k-1}(\mcK)$, where $Z_k(\mcK)$ the kernel of $\partial_k$ and $B_k$ is the image of $\partial_{k+1}$. The definition for $\partial$ is standard and we do not repeat it here. Also it is customary to suppress the subscript $k$ and we shall do so in future.

Set $C_{-1}(\mcK) = \mathbb{F}$, where $\mathbb{F}$ is our coefficient field. Since our coefficients are over a field $\mathbb{F}$, all our groups are actually $\mathbb{F}$-vector spaces. We define the {\em (reduced) $k$-th homology group $\Hg_k(\mcK)$} as the quotient group $\Hg_k(\mcK) := Z_k(\mcK) / B_k(\mcK).$ The rank of $\Hg_k(\mcK)$ is called as {\em the $k$-th Betti number} $\beta_k(\mcK)$. 

%
%

\subsection{Minimal spanning acycles}
\label{sec:msaprelim}
For a more detailed exposition on spanning acycles, we refer the reader to \citet[Section 2.1]{Skraba17}.

We now introduce spanning acycles. A subset $S \subset \mcF_k$ is said to be a {\em $k$-spanning acycle} if $\beta_{k-1}(\mcK_{k-1} \cup S) = \beta_{k-1}(\mcK)$ and $\beta_k(\mcK_{k-1} \cup S) = 0$. A {\em minimal $k$-spanning acycle} is a spanning acycle $S$ that minimizes the total weight $w(S) = \sum_{ \sigma \in S}w(\sigma)$. When the index $k$ in consideration is clear, we shall drop the index and refer to as spanning acyle and minimal spanning acycle (MSA). When unique, we shall denote the minimal spanning acycle by $\MSA$. The MSA is unique if the weight function $w$ is injective on $k$-faces \cite[Lemma 31]{Skraba17}. We refer the reader to Figure \ref{fig:msas} for illustrations of $2$-MSAs.

\subsection{Relation to persistent homology}
\label{sec:pers_homology}
We shall now relate minimal spanning acycles to persistent homology. This is important in understanding the implications of our results to persistent homology 
and also this shall yield us the important notion of negative and positive faces that will characterize minimal spanning acycles.

{\em Persistent homology} is an algebraic invariant which keep track of changes of homology in $\Hg_k(\mcK(t))$ for a weighted complex $\mcK$. \cite{Edelsbrunner10,Carlsson09,Carlsson14}. More formally, we define the {\em $(s,t)$-persistent homology group} as the quotient group 
$$ \Hg^{s,t}_k(\mcK) = \frac{Z_k(\mcK(s))}{Z_k(\mcK(t)) \cap B_k(\mcK(t))}, \quad s \leq t.$$
The ranks of $\Hg^{s,t}_k(\mcK), 0 \leq s \leq t \leq \infty$ can be encoded as a collection of intervals of $\mR_+$ called as {\em persistence barcode} \cite{zomorodian2005computing} or alternatively as a multi-set of points in called {\em the persistence diagram} $\dgm(\mcK,w)$ \cite{cohen2007stability}. We do not define persistence diagrams formally here but refer to \cite[Definition 9]{Skraba17}. For our purposes, it suffices to understand that $\dgm(\mcK,w)$ is a finite subset of $\{(s,t) : 0 \leq s \leq t \leq \infty \}$ and if $(b,d) \in \dgm(\mcK,w)$ it signifies that a certain homology class was `born' in $\mcK(b)$ and 'died' at $\mcK(d)$. The projection of the persistence diagram to the $y$-axis is called {\em the death times} (denoted by $\dgm_d$) and the projection to the $x$-axis is called {\em the birth times} (denoted by $\dgm_b$). There are two important alternate characterizations of birth and death times for us. The first is a straighforward one via Betti numbers and the second will be via minimal spanning acycles. 
\begin{lemma}(\cite[Section 3]{delfinado1993incremental})
\label{l:bettideath}
Let $\mcK$ be a finite weighted complex. If $\sigma \in \mcF_k$ then $\beta_j(\mcK(\sigma^-) \cup \{\sigma\}) = \beta_j(\mcK(\sigma^-))$ for $j \neq k, k -1$. Further, only one of the following two statements hold :
\begin{enumerate}[label=(\roman*),wide,  labelindent=0pt]
\item (Negative face) $\beta_{k-1}(\mcK(\sigma^-) \cup \{\sigma\}) = \beta_{k-1}(\mcK(\sigma^-)) - 1$ and $\beta_k(\mcK(\sigma^-) \cup \{\sigma\}) = \beta_k(\mcK(\sigma^-))$.
\item (Positive face) $\beta_{k-1}(\mcK(\sigma^-) \cup \{\sigma\}) = \beta_{k-1}(\mcK(\sigma^-))$ and $\beta_k(\mcK(\sigma^-) \cup \{\sigma\}) = \beta_k(\mcK(\sigma^-)) + 1$.
\end{enumerate}
Suppose that $w$ is an injective weight function. Then $w(\sigma)$ is a death time iff it is a negative face and $w(\sigma)$ is a birth time iff it is a positive face. 
\end{lemma}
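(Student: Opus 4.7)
The plan is to exploit the long exact sequence in homology of the pair $(\mcL, \mcL')$ where $\mcL' := \mcK(\sigma^-)$ and $\mcL := \mcL' \cup \{\sigma\}$. Since $\sigma$ is a single $k$-simplex added to $\mcL'$ (and every proper face of $\sigma$ already belongs to $\mcL'$ by monotonicity of $w$), the relative chain complex $C_*(\mcL, \mcL')$ is concentrated in degree $k$ and is isomorphic to $\mathbb{F}$. Hence the relative homology $\Hg_*(\mcL,\mcL')$ is $\mathbb{F}$ in degree $k$ and zero in all other degrees.

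The associated long exact sequence of the pair therefore reads, for each $j$,
\[
\cdots \to \Hg_j(\mcL,\mcL') \to \Hg_{j-1}(\mcL') \to \Hg_{j-1}(\mcL) \to \Hg_{j-1}(\mcL,\mcL') \to \cdots.
\]
For $j \neq k, k-1$ both relative groups flanking $\Hg_j(\mcL') \to \Hg_j(\mcL)$ vanish, so the induced map is an isomorphism; this gives the first claim, that $\beta_j$ is unchanged outside dimensions $k$ and $k-1$. For the remaining degrees, the relevant segment is
\[
0 \to \Hg_k(\mcL') \to \Hg_k(\mcL) \xrightarrow{\partial} \mathbb{F} \to \Hg_{k-1}(\mcL') \to \Hg_{k-1}(\mcL) \to 0,
\]
where the zero on the left uses $\Hg_k(\mcL,\mcL') = 0$ in one position and the zero on the right uses $\Hg_{k-1}(\mcL,\mcL') = 0$. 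The connecting map has domain $\mathbb{F}$, so it is either injective or zero; these are exactly the two cases in the statement. Chasing dimensions through the exact sequence in each case yields the desired $\pm 1$ adjustment of $\beta_k$ (positive) or $\beta_{k-1}$ (negative), while the other of the two is unaffected.

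For the second assertion, I would invoke the standard interval decomposition of persistence, together with the characterization (valid when $w$ is injective, so all filtration values are distinct) that a simplex $\sigma$ with $w(\sigma) = t$ is a birth time in dimension $k$ precisely when it creates a new class in $\Hg_k$ — that is, when $\beta_k$ strictly increases at time $t$ — and is a death time precisely when it kills a previously surviving class in $\Hg_{k-1}$ — that is, when $\beta_{k-1}$ strictly decreases. Combining this with the two-case dichotomy established above identifies positive faces with birth times and negative faces with death times.

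The only subtle point is justifying that the relative chain complex really is as simple as claimed; this uses that $w$ being monotone forces every face of $\sigma$ to satisfy $w(\tau) < w(\sigma)$ (strict, by injectivity) so that all proper faces of $\sigma$ already lie in $\mcL'$, and therefore $\sigma$ is a single free generator in the relative complex. No other step is an obstacle; the argument is a textbook application of the long exact sequence of a pair, specialized to adding a single simplex, followed by the standard dictionary between elementary filtration steps and persistence pairings.
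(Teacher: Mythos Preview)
The paper does not prove this lemma; it is stated with a citation to \cite{delfinado1993incremental} and used as a black box. Your long-exact-sequence argument for the pair $(\mcL,\mcL')$ is a correct and standard proof. Two small slips worth fixing: the connecting homomorphism $\partial$ in your displayed five-term sequence should label the arrow \emph{out of} $\mathbb{F} = \Hg_k(\mcL,\mcL')$ into $\Hg_{k-1}(\mcL')$, not the arrow into $\mathbb{F}$; and the ``zero on the left'' is justified by $\Hg_{k+1}(\mcL,\mcL') = 0$, not by $\Hg_k(\mcL,\mcL') = 0$ (the latter group is $\mathbb{F}$, which is the whole point). Neither slip affects the argument.

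The paper does add, immediately after the lemma, the remark that positivity of $\sigma$ is equivalent to $\partial\sigma \in \partial C_k(\mcK(\sigma^-))$ and negativity to $\partial\sigma \notin \partial C_k(\mcK(\sigma^-))$. In your framework this is precisely the dichotomy of whether the connecting map $\mathbb{F} \to \Hg_{k-1}(\mcL')$, which sends the generator to the class $[\partial\sigma]$, is zero or injective---so your approach and the paper's supplementary characterization are the same observation in different clothing.
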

 Positive faces may also  be characterized by $Z_k(\mcK(\sigma^-)) \subsetneq Z_k(\mcK(\sigma))$ or and negative faces by $B_k(\mcK(\sigma^-)) \subsetneq B_k(\mcK(\sigma))$. The first condition is equivalent to the statement that $\partial_k \sigma \in \partial_k C_k (\mcK(\sigma^-))$ and the second that $\partial_k \sigma \notin \partial_k C_k (\mcK(\sigma^-))$.

 With the classification of faces as positive and negative, we can assign labels to faces. We set $\ell(\sigma) = 1$, if $\sigma$ is positive and else $\ell(\sigma) = -1$ i.e., $\sigma$ is negative. The following theorem relating birth and death times to MSA was proven in \cite{Skraba17}.
\begin{theorem}(\cite[Theorem 3]{Skraba17})
\label{t:birthdeathmsa}
Let $\mcK$ be a finite weighted complex with an injective weight function $w$. Denote by $\dgm_b, \dgm_d$ respectively the birth and death times of the persistent diagram $\dgm(\mcK,w)$. Then we have that 
\begin{align*}
\dgm_d & = \{w(\sigma) : \ell(\sigma) = -1 \} \, = \{w(\sigma) : \sigma \in \MSA \} \nonumber \\
\dgm_b & = \{w(\sigma) : \ell(\sigma) = 1 \} \quad = \{w(\sigma) : \sigma \in \mcF_k \setminus \MSA \}.
\end{align*}
\end{theorem}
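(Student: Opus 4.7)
Both of the first equalities follow at once from Lemma \ref{l:bettideath}, which identifies death times with weights of negative faces and birth times with weights of positive faces for an injectively-weighted filtration. The task therefore reduces to proving the identity $\MSA = \{\sigma \in \mcF_k : \ell(\sigma) = -1\}$, from which the corresponding equality for $\mcF_k \setminus \MSA$ follows because every $k$-face is either positive or negative.

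The plan is to realize $\MSA$ as the output of the greedy algorithm on a linear matroid and then to check that the greedy rule selects exactly the negative $k$-faces. Declare $S \subseteq \mcF_k$ to be \emph{independent} iff the family $\{\partial \sigma : \sigma \in S\}$ is $\mathbb{F}$-linearly independent in $C_{k-1}(\mcK)$; this is equivalent to $\beta_k(\mcK_{k-1} \cup S) = 0$, so the inclusion-maximal independent sets are precisely the spanning acycles of Section \ref{sec:msaprelim} (maximality forces $\mathrm{span}\{\partial \sigma : \sigma \in S\} = B_{k-1}(\mcK)$, hence $\beta_{k-1}(\mcK_{k-1} \cup S) = \beta_{k-1}(\mcK)$; the converse is immediate). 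This independence relation is a linear matroid over $\mathbb{F}$, and because $w$ is injective on $\mcF_k$ the Rado--Edmonds theorem guarantees that the greedy algorithm produces the unique minimum-weight basis, namely $\MSA$.

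List the $k$-faces in increasing order of weight as $\sigma_1, \ldots, \sigma_N$, and let $S_i$ denote the greedy set after step $i$. The core step is to prove by induction on $i$ that
$$
\mathrm{span}\{\partial \sigma : \sigma \in S_{i-1}\} \;=\; B_{k-1}\bigl(\mcK(\sigma_i^-)\bigr).
$$
The base case $i = 1$ is trivial. For the inductive step, apply the characterization of positive and negative faces recorded immediately after Lemma \ref{l:bettideath}: if $\sigma_{i-1}$ is positive then $\partial \sigma_{i-1}$ already lies in $B_{k-1}(\mcK(\sigma_{i-1}^-))$, so the greedy set does not change and neither side of the identity grows when advancing from $i-1$ to $i$; if $\sigma_{i-1}$ is negative, then $\sigma_{i-1}$ is admitted to the greedy set and both sides acquire $\partial \sigma_{i-1}$ in their span. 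Granted the inductive identity, $\sigma_i$ is admitted by the greedy rule iff $\partial \sigma_i \notin B_{k-1}(\mcK(\sigma_i^-))$, which by the alternative characterization is iff $\sigma_i$ is negative. Hence $\MSA$ equals the set of negative $k$-faces, and Lemma \ref{l:bettideath} then gives both displayed lines.

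The only non-routine piece is the inductive identity relating the span of boundaries of greedy-selected faces to the boundary group $B_{k-1}(\mcK(\sigma_i^-))$; the matroid/greedy equivalence with $\MSA$ is standard once the linear-algebraic reformulation of acyclicity is in place, and the passage from the greedy output to the set of negative faces is then automatic.
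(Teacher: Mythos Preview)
The paper does not supply its own proof of this theorem; it is quoted verbatim from \cite[Theorem~3]{Skraba17} and used as a black box throughout. Your argument is correct and is in fact the standard route to this result: identify the spanning acycles with the bases of the linear matroid on $\mcF_k$ represented by $\sigma\mapsto\partial\sigma$, invoke the Rado--Edmonds theorem to conclude that the greedy algorithm (with injective weights) returns the unique $\MSA$, and then verify by the straightforward induction you give that greedy selects $\sigma_i$ iff $\partial\sigma_i\notin B_{k-1}(\mcK(\sigma_i^-))$, i.e.\ iff $\sigma_i$ is negative. Together with Lemma~\ref{l:bettideath} this yields both displayed lines. There is nothing to compare against in the present paper, but your proof is essentially the one in \cite{Skraba17}.
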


\subsection{Weak stabilization and central limit theorem for Poisson functionals}
\label{sec:weak_stab}

We now introduce Poisson processes formally and state the central limit theorem for statistics of Poisson processes. We refer the reader to \cite{last2017lectures,baccelli2020random} for more details on point processes and in particular, Poisson point processes.

Let $\cN$ be the set of all locally-finite point sets (or equivalently simple Radon counting measures) in $\mR^d$. By a locally-finite point set $\X$, we mean a point set $\X$ such that $\X(B) := | \X \cap B| < \infty$ for all bounded Borel subsets $B$. We shall also view $\X$ as a counting measure $\sum_{x \in \X} \delta_x$ and $\X$ is said to be a simple counting measure if $\X(\{x\}) \in \{0,1\}$ for all $x \in \mR^d$. The associated $\sigma$-algebra (called the {\em evaluation $\sigma$-algebra}) is the smallest $\sigma$-algebra such that $\X \mapsto \X(B)$ is measurable for all Borel subsets $B \subset \mR^d$. Here, we have used $|\cdot|$ to denote cardinality and later we shall also use it to denote Lebesgue measure.

By {\em a point process}, we mean a random element of the space $\cN$ i.e., a measurable mapping from a probability space $(\Omega,\mathcal{F},\mathbb{P})$ to $\cN$ equipped with the evaluation $\sigma$-algebra. A point process $\P$ is said to be a {\em stationary Poisson point process on $\mR^d$} with intensity $\lambda$ if it satisfies the following two properties:
\begin{enumerate}
\item For every Borel subset $B \subset \mR^d$, $\P(B)$ is a Poisson random variable with mean $\lambda |B|$.
\item For every $m \in \bN$ and pairwise disjoint Borel subsets $B_1,\ldots,B_m$ of $\mR^d$, the random variables $\P(B_1),\ldots,\P(B_m)$ are independent ({\em complete independence}).
\end{enumerate}
Given a bounded Borel subset $B$ of $\mR^d$, the restriction of Poisson point process to $B$ can be realized as follows:
$$\P \cap B \stackrel{d}{=} \{ X_1,\ldots,X_{N_B}\},$$
where $N_B$ is a Poisson random variable with mean $\lambda |B|$ and $X_1,X_2, \ldots$ are i.i.d. random vectors in $\mR^d$ with uniform distribution in $B$ and independent of $N_B$. We note that $\P$ is a locally-finite point set with points in general position.

We recall a general central limit theorem for functionals of Poisson point processes due to \cite{trinh2019central}. 
Recall that $W_n = \left[-\frac{n^{1/d}}{2},\frac{n^{1/d}}{2}\right]^d$, let $\mathfrak{A}$ ($=\mathfrak{A}(\{W_n\})$) be the collection of all subsets $A$ of $\mR^d$ such that $A = W_n + x$ for some $W_n$ in the sequence and some point $x \in \mR^d$. For any $\mathfrak{A}$-valued sequence $A_n, n \geq 1$, we say that $A_n \to \mR^d$,  if $\bigcup_{n \geq 1}\bigcap_{m \geq n} A_m \ =\ \mR^d$. 
\begin{theorem}(\cite[Theorem 3.1]{trinh2019central})
\label{thm:clt_Duy17}
Let $H$ be a real-valued functional defined for all finite subsets of $\mR^d$ and satisfying the following four conditions:
\begin{enumerate}[wide, labelindent=0pt]
\item[(i)] \emph{Translation invariance:} $H(\X + y) = H(\X)$ for all finite subsets $\X \subset \mR^d$ and $y \in \mR^d$.
\item[(ii)] \emph{Weak stabilization:} $H$ is said to be weakly stabilizing if there exists a random variable $D_{\infty}(H)$ such that for any $\mathfrak{A}$-valued sequence $A_n \to \mR^d$, the following holds.
\beq
\label{eqn:weakstab-rob}
(D_{\0}H)(\P \cap A_n) \stackrel{a.s.}{\rar} D_{\infty}(H), \, \mbox{as $n \to \infty$.}
\eeq
%
\item[(iii)] \emph{Poisson bounded moments:} $\sup_{A \in \mathfrak{A} ; \, 0 \in A} \EXP{\left[ (D_{\0}H)(\P \cap A)\right]^4} < \infty$.

\end{enumerate}
Then, there exists a constant $\sigma^2 \geq 0$, such that, as $n \to \infty$,
\[ n^{-1}\VAR{H(\P\cap W_n)} \to \sigma^2, \qquad n^{-1/2}\left[H(\P \cap W_n) - \EXP{H(\P\cap W_n)}\right]\ {\Rar}\ N(0,\sigma^2), \]
where  $\Rar$ denotes convergence in distribution and $N(0,\sigma^2)$ stands for normal random variable with mean $0$ and variance $\sigma^2$.\\  If $D_{\infty}(H)$ is a non-zero random variable (i.e., $\mathbb{P}(D_{\infty}(H) \neq 0) > 0$), then $\sigma^2 > 0$.
\end{theorem}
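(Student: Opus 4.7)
The natural route to this result is the martingale-difference CLT pioneered by \citet{kesten1996central} and systematised for Poisson functionals by \citet{Penrose01}. The plan is to partition $W_n$ into $k_n \asymp n$ unit subcubes $Q_1^{(n)}, \ldots, Q_{k_n}^{(n)}$ enumerated in a fixed order, and to set $\mathcal{F}_i^{(n)} := \sigma(\P \cap (Q_1^{(n)} \cup \cdots \cup Q_i^{(n)}))$ with $\mathcal{F}_0^{(n)}$ trivial. Writing
\[ H(\P \cap W_n) - \EXP{H(\P \cap W_n)} \;=\; \sum_{i=1}^{k_n} D_i^{(n)}, \quad D_i^{(n)} := \EXP{H(\P \cap W_n) \mid \mathcal{F}_i^{(n)}} - \EXP{H(\P \cap W_n) \mid \mathcal{F}_{i-1}^{(n)}}, \]
yields a square-integrable martingale-difference decomposition, and condition (iii) combined with Jensen upgrades this to $\sup_{i,n} \EXP{(D_i^{(n)})^4} < \infty$.

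The core step is to express each $D_i^{(n)}$ in terms of add-a-point increments to which weak stabilization applies. Via a resampling coupling --- replace the Poisson configuration inside $Q_i^{(n)}$ by an independent Poisson copy $\P'$ --- one writes $D_i^{(n)}$ as a conditional expectation of $H(\P \cap W_n) - H((\P \cap W_n \setminus Q_i^{(n)}) \cup (\P' \cap Q_i^{(n)}))$, and the inner difference telescopes over the finitely many points of $\P \cap Q_i^{(n)}$ and $\P' \cap Q_i^{(n)}$, producing a finite sum of $D_y H$-type increments for $y \in Q_i^{(n)}$. Translation invariance recentres each such increment at the origin, so that the relevant window is $W_n - c_i^{(n)} \in \mathfrak{A}$ where $c_i^{(n)}$ is the centre of $Q_i^{(n)}$. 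For any sequence of subcubes remaining asymptotically deep inside $W_n$, one has $W_n - c_i^{(n)} \to \mR^d$, whence the weak-stabilization hypothesis gives a.s. convergence of these increments to $D_{\infty}(H)$. Averaging this pointwise convergence over the $\Theta(n)$ subcubes, and upgrading to $L^2$ via the uniform $L^4$ bound (dominated convergence), produces $n^{-1}\sum_i \EXP{(D_i^{(n)})^2 \mid \mathcal{F}_{i-1}^{(n)}} \to \sigma^2$ in $L^1$ for an explicit $\sigma^2$ expressible through a second moment of $D_{\infty}(H)$. Orthogonality of the martingale differences then gives the variance asymptotics, Lindeberg's condition is immediate from the uniform $L^4$ bound, and McLeish's martingale CLT delivers the Gaussian limit.

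The main obstacle lies precisely in the identification step above: the conditional expectation mixes information from inside and outside $Q_i^{(n)}$, and one must verify that subcubes near the boundary of $W_n$ contribute negligibly while those in the bulk all converge to a common limit --- this is where translation invariance and the convergence $A_n \to \mR^d$ are jointly used, and where the flexibility of allowing $A_n$ to be any translated box in $\mathfrak{A}$ (rather than only centred windows $W_n$) becomes indispensable. For the lower bound $\sigma^2 > 0$ under non-degeneracy of $D_{\infty}(H)$, the plan is a single-box resampling argument: resampling the Poisson configuration inside a fixed box around the origin produces a variance contribution that is asymptotically $\Theta(1)$ and is controlled from below by a second moment of $D_{\infty}(H)$; since this contribution is dominated by $\VAR{H(\P \cap W_n)}$, the assumption $\BP(D_{\infty}(H) \neq 0) > 0$ forces $\sigma^2 > 0$.
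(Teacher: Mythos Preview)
The paper does not prove this theorem; it is quoted from \citet{trinh2019central}, with the variance lower bound attributed to \cite[Remark 2.9]{trinh2022random}. Your sketch follows the martingale-difference route of \citet{Penrose01} (building on \citet{kesten1996central}), which the paper explicitly mentions as one valid proof. However, the paper also points out that the proof in \citet{trinh2019central} proceeds instead via the Poincar\'e inequality for Poisson functionals, which has the advantage of dispensing with a growth condition required in \cite{Penrose01}. So your approach is a legitimate proof of the statement, but it is not the one in the cited source, and neither proof is reproduced in the present paper.
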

The last statement on variance lower bound follows from \cite[Remark 2.9]{trinh2022random}. The proof in \cite{Penrose01} uses a martingale-difference central limit theorem building upon the results of \cite{kesten1996central}. A newer proof via Poincar\'e inequality was given recently in \cite[Theorem 3.1]{trinh2019central} and compared to \cite{Penrose01}, this did not require a growth condition. We also note that the central limit theorem in \cite{Penrose01} is stated for a more general sequence of windows than $W_n$. 

\section{Proofs}
\label{s:proofs}

We first give a sketch of proof of stabilization for MST (i.e., in the special case of $k = 1$) in Section \ref{s:proof_MST} expanding upon the proof overview given in the introduction.  Next we recall some basic lemmas on weighted complexes in Section \ref{sec:basiclem} and then prove stabilization under chain maps in Section \ref{s:stabchainmaps}. The existence of chain maps between suitable Delaunay complexes is shown in Section \ref{s:lem_del}.  We prove stabilization of Poisson Delaunay complexes in Section \ref{s:stabPoisDel} and the proofs of main results are given in Section \ref{s:proofs_main} with the proof of variance lower bound alone postponed to Section \ref{s:varlb}. 

The above subdivision of sections is also to segregate the deterministic parts of the proof from the probabilistic parts. Sections \ref{sec:basiclem}-\ref{s:lem_del} are purely deterministic and the probabilistic proofs are in Sections \ref{s:stabPoisDel}-\ref{s:varlb}.

\subsection{Proof sketch in the case of \texorpdfstring{$k = 1$}.}
\label{s:proof_MST}

To better illustrate the proof strategy sketched at the end of Section \ref{s:intro}, we expound upon this now in the case of $k = 1$ and $\phi$ is identity. In this case $\pMi(\P_n)$ is simply the total weight of the minimal spanning tree formed on $\P_n$. Though accounting for boundary effects is an important part of our proof, we shall explain the proof steps for minimal spanning tree ignoring the boundary effects. It makes it easier to understand the crux of our proof.

Let $G := G(\X)$ be the Delaunay graph on a locally-finite point set $\X$  and recall that the edge-weights $w$ is given by the distance between the points. Consider also the Delaunay graph $G' := G(\X \cup \{\0\})$. Let $W_n = [-\frac{n^{1/d}}{2},\frac{n^{1/d}}{2}]^d, n \geq 0$ be the sequence of windows increasing to $\mR^d$ and $G_n, G'_n$ be the restriction of graphs $G,G'$ to $W_n$. Further, let $\MST_n : = \MST(G_n)$, $\MST'_n := \MST(G'_n)$ be the corresponding MSTs and $w(\MST_n), w(\MST'_n)$ be the total weight of the respective MSTs. Recall that for an edge $e$,  we denote the subgraph of $G_n$ consisting of edges whose weight is smaller than that of $e$ by $G_n(e^-)$.

\begin{enumerate}
\item Suppose that $G_n' = G_n - \{e_1,\ldots,e_k\} + \{e'_1,\ldots,e'_l\}$ for all $n \geq n_0$ and assume that the edges are all in $W_{n_0}$.
\item The add-one cost for MST upon adding a point to the point process can be decomposed as a finite sum of add-one cost (or its negative) of MST upon adding an edge i.e.,
\begin{align*}
& w(\MST'_n) - w(\MST_n) = \sum_{i=1}^k  w\big(\MST(G_n - \{e_1,\ldots,e_i\})\big) - w\big(\MST(G_n - \{e_1,\ldots,e_{i-1}\}) \big) \\
& + \sum_{i=1}^l  w\big(\MST(G_n - \{e_1,\ldots,e_k\} + \{e'_1,\ldots,e'_i\}) \big) - w\big( \MST(G_n - \{e_1,\ldots,e_k\} + \{e'_1,\ldots,e'_{i-1}\}) \big).
\end{align*}
\item Thus, we will show convergence of add-one cost for $\MST_n$'s of a suitable sequence of graphs under addition of a single edge and because of the above identity, this suffices to show convergence of $w(\MST'_n) - w(\MST_n)$. Hence we may assume that $G'_n = G_n \cup \{e_0\}$ for $n$ large enough, say $n \geq n_0$.

\item When an edge $e_0$ is added to the graph, the MST can increase at most by one and the symmetric difference of $\MST_n, \MST'_n$ is at most two for all $n$.
\item Suppose $e_0$ is positive for $G_n$ for some $n \geq n_0$, i.e., it creates a cycle in the graph for some $G_n(e_0^-)$, then it must also create a cycle for any larger graph $G_m(e_0^-)$ for $m > n$. In this case, the edge does not contribute to the difference of $\MST_m$, $\MST_m'$ for $m > n$ and so $\MST_m = \MST_m'$ for $m > n$.
\item If $e_0$ is negative for all $n \geq n_0$ i.e., it reduces the number of components when added for all $G_n(e_0^-)$, then the following three case can occur
\begin{enumerate}
\item If $\MST_m' = \MST_m + e_0$ for all $m \geq n_0$ (i.e., $e_0$ is not part of any cycle in $\MST_m$ for the entire sequence) then again stabilization follows easily.
\item If for some graph $G_n$, $\MST_n \setminus \MST'_n = e_n$ (for some edge $e_n \in G_n$) then for any larger graph $G_m$, $\MST_m \setminus \MST'_m = e_m$ where $w(e_m) \leq w(e_n)$. In other words, adding $e_0$ creates a cycle in $\MST_n$ such that there is a larger edge $e_n \in \MST_n$ and $\MST'_n = \MST_n - e_n + e_0.$ Then, the same holds for $\MST_m$ as well with an edge $e_m$ ($= e_n$ possibly) removed from $\MST_m$. Further, the edge $w(e_m)$ has to be at most $w(e_n)$ else $e_n$ itself would have been removed from $\MST_m$ as well.
\item Thus $w(\MST(G_n + e_0)) - w(\MST(G_n)) = w(e_0) - w(e_n)$ for all $n \geq n_0$ with $w(e_n)$ being a decreasing sequence and hence the add-one cost converges.
\end{enumerate}
\end{enumerate}
Formally,  we must consider minimal spanning trees on $G(\X_n)$ and $G(\X_n \cup \{\0\})$. The difference between $G(\X_n)$ and $G_n$ is that the latter has monotonicity in $n$ while the former does not. As discussed at the end of Section \ref{s:intro}, this is true for Delaunay complexes and we overcome this issue by using suitable chain maps between $G(\X_n)$ and $G(\X_m)$ for $m > n$.
\subsection{Basic lemmas on weighted complexes}
\label{sec:basiclem}

We shall fix a $k \geq 1$ in this subsection and assume all our minimal spanning acycles to be $k$-dimensional. Hence, for convenience, we shall drop the subscript $k$ in all our notations. We prepare for the main proofs with some preliminary lemmas on weighted complexes. Firstly, we recall the stability result from \cite[Theorem 4]{Skraba17} but stated for MSAs using \cite[Theorem 3]{Skraba17}. 
\begin{theorem}
\label{t:stabSTY17}(\cite[Theorem 4]{Skraba17})
Let $\mcK$ be a finite complex with two weight functions $w,w'$. Let $\MSA, \MSA'$ be the two induced minimal spanning acycles respectively. Then for any $p \in \{0,1, \ldots, \infty\}$ there exists a bijection between $\pi : \MSA \to \MSA'$ such that
$$ \sum_{\sigma \in \MSA}|w(\sigma) - w'(\pi(\sigma))|^p \leq \sum_{\sigma \in \mcF_k} |w(\sigma) - w'(\sigma)|^p,$$
where, $\sum_i |x_i|^0 := \sum_i \1[x_i \neq 0]$ and $\sum_i |x_i|^{\infty} = \sup_i |x_i|$ for a real sequence $x_i$.
\end{theorem}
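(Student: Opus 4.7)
The plan is to prove this via a linear homotopy argument that exploits the matroidal structure of spanning acycles. First, by a small generic perturbation followed by a continuity/limiting argument, I would reduce to the case where both $w$ and $w'$ are injective on $\mcF_k$, so that $\MSA$ and $\MSA'$ are each uniquely determined via the greedy algorithm characterization coming from Lemma \ref{l:bettideath}.

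Next, consider the straight-line interpolation $w_t := (1-t) w + t w'$ for $t \in [0,1]$. For all but finitely many values of $t$, $w_t$ is injective on $\mcF_k$, so the MSA $\MSA_t$ of $(\mcK, w_t)$ is well-defined and, as a function of $t$, constant between critical values. As $t$ crosses each of finitely many critical times $0 < t_1 < \cdots < t_N < 1$, exactly one pair $(\sigma,\tau)$ of $k$-faces has its relative $w_t$-order flipped (after a further small perturbation separating critical values). The key structural claim, which follows directly from the greedy algorithm, is that at each $t_i$ the MSA changes by at most one elementary swap: either $\MSA_{t_i^+} = \MSA_{t_i^-}$, or $\MSA_{t_i^+} = (\MSA_{t_i^-} \setminus \{\sigma\}) \cup \{\tau\}$ for the crossing pair. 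This is because only the relative order of $\sigma$ and $\tau$ changes, so the only possible modification to the greedy output is to swap which of them is selected. The desired bijection $\pi \colon \MSA \to \MSA'$ is then obtained by tracking each simplex through this finite sequence of swaps as $t$ runs from $0$ to $1$.

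For the quantitative bound, I would argue by induction on $N$. The base case $N = 0$ gives $\MSA = \MSA'$ with $\pi$ the identity, and the inequality is immediate. For the inductive step, at a swap between $\sigma$ and $\tau$ occurring at time $t_i$, the crossing condition $w_{t_i}(\sigma) = w_{t_i}(\tau)$ reads
\[
t_i \cdot \bigl[(w'(\sigma) - w(\sigma)) - (w'(\tau) - w(\tau))\bigr] = w(\tau) - w(\sigma),
\]
and together with the triangle inequality and elementary $\ell^p$ manipulations this lets one bound the cost of a swap chain $\sigma \to \tau \to \cdots$ in $\pi$ by the combined $|w-w'|^p$ contributions of the simplices appearing in that chain. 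Since distinct swap chains involve disjoint sets of simplices, summing then yields the required bound by $\sum_{\sigma \in \mcF_k}|w(\sigma) - w'(\sigma)|^p$. The extremal cases are handled more directly: for $p = 0$, each swap contributes at most one mismatch and involves simplices whose weights actually change, while for $p = \infty$, at each swap one has $|w(\sigma) - w'(\tau)| \leq \max(|w(\sigma) - w'(\sigma)|, |w(\tau) - w'(\tau)|)$, from which the bound propagates inductively.

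The main obstacle will be making the per-chain bookkeeping fully rigorous: one must verify that no simplex contributes its $|w-w'|^p$ cost to more than one chain, and that the successive swaps along a chain assemble telescopically into the claimed $\ell^p$ inequality. A convenient way to organise this is to use the strong basis exchange property of the spanning acycle matroid so that each swap corresponds to a unique "responsible" pair, giving a charging scheme from swaps to pairs $\{\sigma,\tau\}$ of simplices with $(w-w')(\sigma) \neq (w-w')(\tau)$.
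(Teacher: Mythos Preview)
The paper does not prove Theorem~\ref{t:stabSTY17}; it is quoted from \cite[Theorem~4]{Skraba17} and merely rephrased for MSAs via Theorem~\ref{t:birthdeathmsa}, so there is no in-paper argument to compare against. Your linear-interpolation strategy is nonetheless essentially the classical route to persistence stability, and the structural claim that a single transposition in the $w_t$-order changes the greedy MSA by at most one elementary swap is correct (it is exactly the matroid basis-exchange step).

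Your quantitative bookkeeping, however, has a genuine gap. The assertion that ``distinct swap chains involve disjoint sets of simplices'' is false: a simplex can enter $\MSA_t$ at one critical time (joining one chain), later leave, and then re-enter as part of a \emph{different} chain. What is true is that at each fixed $t$ every simplex in $\MSA_t$ lies on exactly one chain, so the \emph{time intervals} during which a given $\sigma$ is active are disjoint and have total length at most~$1$. With this corrected accounting the cases $1\le p<\infty$ do go through: along a chain one has $w'(\pi(\sigma_0))-w(\sigma_0)=\sum_i (t_i-t_{i-1})\bigl(w'(\sigma_i)-w(\sigma_i)\bigr)$, and Jensen's inequality followed by summation over chains gives the bound because each $\sigma$ contributes $|w(\sigma)-w'(\sigma)|^p$ weighted by its total active time. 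The $p=\infty$ case follows the same way. The $p=0$ case does \emph{not} follow from your charging scheme, since one simplex with $w(\sigma)\neq w'(\sigma)$ may sit on several chains; the cleanest fix is to change the weights one simplex at a time, use a one-swap bound of the type in Lemma~\ref{l:add_simplex}(i) at each step, and compose the resulting bijections (mismatches are subadditive under composition).
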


The following lemma shall help us to understand how minimal spanning acycle changes upon addition of a simplex.
\begin{lemma}
\label{l:add_simplex}
Let $\mcK,\mcK_\sigma$ be weighted complexes on a finite point set $\X$ such that $\mcK_\sigma = \mcK \cup \{\sigma\}$ where $\sigma$ is a $k$-simplex such that $\sigma \notin \mcK$ and $\mcK_\sigma$ is a simplicial complex. We denote the labels of simplices on $\mcK$ by $\ell$ and that on $\mcK_\sigma$ by $\ell_\sigma$. Then, we have that

\begin{enumerate}[label=(\roman*),wide,  labelindent=0pt]
\item $| \MSA(\mcK_\sigma) \setminus \MSA(\mcK)|, | \MSA(\mcK) \setminus \MSA(\mcK_\sigma)| \leq 1$ or equivalently,
$$ \sum_{\tau \in \mcK} \1(\ell(\tau) \neq \ell_\sigma(\tau)) \leq 1.$$
\item With the notation as above, if $\ell_\sigma(\sigma) = 1$ then $\MSA(\mcK) = \MSA(\mcK_\sigma)$ and hence $\pM(\mcK) = \pM(\mcK_\sigma).$
\end{enumerate}
\end{lemma}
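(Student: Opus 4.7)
The cleanest route is through the algebraic characterization stated just after Lemma \ref{l:bettideath}: a $k$-face $\tau$ is positive in a weighted complex $\mcJ$ iff $\partial_k \tau \in \partial_k C_k(\mcJ(\tau^-))$, and negative otherwise. By Theorem \ref{t:birthdeathmsa}, the MSA is precisely the set of negative $k$-faces, so controlling the symmetric difference of MSAs reduces to controlling which $k$-faces change label when $\mcK$ is replaced by $\mcK_\sigma = \mcK \cup \{\sigma\}$. The plan is first to verify (ii) by showing that a positive $\sigma$ causes \emph{no} label change at all, and then to prove (i) by showing that in the remaining case (i.e.\ $\sigma$ negative) at most one $\tau \in \mcF_k(\mcK)$ can flip, and such a flip is necessarily from negative to positive.

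\textbf{Step 1, proof of (ii).} Assume $\ell_\sigma(\sigma)=1$, so $\partial_k \sigma = \partial_k c_0$ for some $c_0 \in C_k(\mcK(\sigma^-))$. Fix $\tau \in \mcF_k(\mcK)$. If $w(\tau) < w(\sigma)$, then $\mcK_\sigma(\tau^-) = \mcK(\tau^-)$ and $\ell(\tau) = \ell_\sigma(\tau)$ trivially. If $w(\tau) > w(\sigma)$ then $\mcK(\sigma^-) \subset \mcK(\tau^-)$, so $c_0 \in C_k(\mcK(\tau^-))$ and therefore $\partial_k \sigma \in \partial_k C_k(\mcK(\tau^-))$. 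Consequently
\[ \partial_k C_k(\mcK_\sigma(\tau^-)) = \partial_k C_k(\mcK(\tau^-)) + \langle \partial_k \sigma \rangle = \partial_k C_k(\mcK(\tau^-)), \]
and whether or not $\partial_k \tau$ lies in this space is unchanged, i.e.\ $\ell(\tau)=\ell_\sigma(\tau)$. Since $\sigma$ itself is positive and hence not in $\MSA(\mcK_\sigma)$, the MSAs agree.

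\textbf{Step 2, proof of (i).} The space $\partial_k C_k$ at each prefix is weakly monotone under adjoining $\sigma$, so the only possible label flip for $\tau \in \mcF_k(\mcK)$ is negative$\to$positive, and it can occur only when $w(\tau) > w(\sigma)$. Suppose, for contradiction, that two such faces $\tau_1, \tau_2$ flip, with $w(\sigma) < w(\tau_1) < w(\tau_2)$. The flip for $\tau_i$ is equivalent to the existence of scalars $\alpha_i \neq 0$ and chains $c_i \in C_k(\mcK(\tau_i^-))$ with
\[ \partial_k \tau_i = \alpha_i\, \partial_k \sigma + \partial_k c_i, \qquad i = 1,2. \]
Eliminating $\partial_k \sigma$ using the first equation gives
\[ \partial_k \tau_2 = \partial_k\!\left( \tfrac{\alpha_2}{\alpha_1}\tau_1 - \tfrac{\alpha_2}{\alpha_1} c_1 + c_2 \right), \]
and since $\tau_1 \in \mcK(\tau_2^-)$ and $c_1, c_2 \in C_k(\mcK(\tau_2^-))$, this chain lives in $C_k(\mcK(\tau_2^-))$. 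Hence $\partial_k \tau_2 \in \partial_k C_k(\mcK(\tau_2^-))$, contradicting $\ell(\tau_2) = -1$. Thus at most one $\tau$ flips, and combining this with the (at most one) contribution from $\sigma$ itself entering $\MSA(\mcK_\sigma)$ gives both bounds in (i).

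\textbf{Main obstacle.} The only subtle step is the linear-algebra argument above that rules out two simultaneous negative$\to$positive flips. It crucially uses that coefficients are taken in a field (so $\alpha_1$ is invertible and a genuine substitution is possible) and that $w(\tau_1) < w(\tau_2)$ places $\tau_1$ itself inside $\mcK(\tau_2^-)$, absorbing $\partial_k \sigma$ into a boundary available at the later prefix. Everything else is bookkeeping via the prefix-boundary characterization of labels and Theorem \ref{t:birthdeathmsa}.
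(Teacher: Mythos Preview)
Your proof is correct and takes a genuinely different route from the paper. For part (i), the paper invokes the stability result (Theorem~\ref{t:stabSTY17}) as a black box: it introduces an auxiliary weight giving $\sigma$ maximal weight, observes that $\MSA(\mcK)\subset \MSA(\hat{\mcK})\subset \MSA(\mcK)\cup\{\sigma\}$, and then reads off the bounds from the $p=0$ case of that theorem. For part (ii), the paper argues by contradiction and cites \cite[Lemma~19]{Skraba17}. By contrast, you work directly with the prefix-boundary characterization of positive/negative faces and give an explicit linear-algebra argument showing that (a) only negative$\to$positive flips are possible, and (b) two simultaneous such flips would force the later one to have been positive already in $\mcK$. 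This is more self-contained: it needs neither the stability theorem nor the external lemma from \cite{Skraba17}, and it makes transparent exactly where field coefficients enter (the invertibility of $\alpha_1$). The paper's approach, on the other hand, is shorter once the stability machinery is in place and would generalize immediately to perturbations of more than one weight.
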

\begin{proof} \,
\noindent {\em (i):} This essentially follows from Theorem \ref{t:stabSTY17}. We shall use $w$ to denote the weight function on both $\mcK,\mcK_\sigma$. Define a weighted complex $\hat{\mcK}$ as follows : $\hat{\mcK} = \mcK \cup \{\sigma\}$ where $\hat{w}(\sigma) = \max_{\tau \in \mcK}w(\tau) + 1$. Observe that $\MSA(\mcK) \subset \MSA(\hat{\mcK}) \subset \MSA(\mcK) \cup \{\sigma\}$. Now, from Theorem \ref{t:stabSTY17}, we have that for any $p \geq 0$, there exists a bijection $\Pi : \MSA(\mcK_\sigma) \to \MSA(\hat{\mcK})$ such that
\[ \sum_{\tau \in \MSA(\mcK_\sigma)} |w(\tau) - \hat{w}(\Pi(\tau))|^p \leq |\hat{w}(\sigma) - w(\sigma)|^p.\]
For $p = 0$, the above inequality is nothing but $|\MSA(\mcK_\sigma) \setminus \MSA(\hat{\mcK})|,
|\MSA(\hat{\mcK}) \setminus \MSA(\mcK_\sigma)| \leq 1.$ Since $\MSA(\mcK) \subset \MSA(\hat{\mcK})$, first of the above inequalities yields immediately $|\MSA(\mcK) \setminus \MSA(\mcK_\sigma)| \leq 1$. If $\MSA(\mcK) = \MSA(\hat{\mcK})$, then the proof is complete.

Suppose $\MSA(\hat{\mcK}) = \MSA(\mcK) \cup \{\sigma\}$. Further, if $\sigma \notin \MSA(\mcK_\sigma)$ then $\sigma \notin \MSA(\hat{\mcK})$ as $w(\sigma) \leq \hat{w}(\sigma)$. Thus $\sigma \in \MSA(\mcK_\sigma)$ and so
$ \MSA(\mcK_\sigma) \setminus \MSA(\mcK) = \MSA(\hat{\mcK}) \setminus \MSA(\mcK) = \{\sigma\}$ and so the proof is complete. \\

\noindent {\em (ii):} Assume that $\MSA(\mcK) \neq \MSA(\mcK_\sigma)$. There are two possible cases: there exists a $\tau \in \MSA(\mcK) \backslash \MSA(\mcK_\sigma)$ or $\tau' \in \MSA(\mcK_\sigma) \backslash \MSA(\mcK)$. In the first case, this implies that the boundary of $\tau$ can be written as the linear combination using the boundary of $\sigma$. However since $\ell_\sigma(\sigma)=1$, this boundary can be expressed in terms of the boundaries of $\mcK$, contradicting $\tau\in \MSA(\mcK)$. In the latter case, we recall that for $\tau'\in \mcK\subseteq \mcK_\sigma$, $\ell_\sigma(\tau')=-1$ implies $\ell(\tau')=-1$ (\cite[Lemma 19]{Skraba17}) . This implies that $\tau'=\sigma$ which contradicts the assumptions. 

\end{proof}

\subsection{Stablization under Chain Maps}
\label{s:stabchainmaps}

In this section, we prove the topological lemmas regarding behaviour of weighted complexes under chain maps. Throughout this section we will consider the relationship of two weighted simplicial complexes $\mcK$ and $\mcK'$ (with the respective weight functions denoted $w,w'$) whose chain groups are related by a chain map, i.e.
$$ f: \bigoplus_k C_k(\mcK) \rightarrow \bigoplus_k C_k(\mcK') $$
We observe  that even if $\sigma \in \mcK \cap \mcK'$,  it need not hold that $w(\sigma) = w'(\sigma)$. 
We remind the reader that:  (1) $\partial$ is  the boundary map and that boundary of a simplex $\sigma$ is denoted by $\partial \sigma$; (2) since $\mathrm{dim}(\sigma) = k$, $\partial \sigma \in C_{k-1}$; (3) by definition, a chain map commutes with the boundary operator i.e.,  $f\circ \partial = \partial\circ f$.
\begin{assumption}\label{ass:chain_map}
We shall assume that the chain map satisfies following two properties:
\begin{enumerate}[wide, labelindent=0pt]
%
%
%
\item The restriction to any sublevel set of the weight function is a chain map i.e.,
$$ f: \bigoplus_k C_k(\mcK(t)) \rightarrow \bigoplus_k C_k(\mcK'(t)), $$
is a chain map for all $t \geq 0$.
\item For any $\sigma \in \mcK(t) \cap\mcK'(t)$ for some $t \geq 0$, $\sigma$ and $f(\sigma)$ are homologous as chains in $C_k(\mcK'(t))$.

\end{enumerate} 
\end{assumption}
Suppose that $\partial \sigma \in \mcK \cap \mcK'$ for a simplex $\sigma$. Then, we set
\begin{align*}
\mcK_\sigma &:= \mcK\cup\sigma\\
\mcK'_\sigma &:= \mcK'\cup\sigma
\end{align*}
For each of the complexes, we denote the label on the simplices as $\ell, \ell', \ell_\sigma$, and $\ell'_\sigma$ for $\mcK,\mcK', \mcK_\sigma$ and $ \mcK'_\sigma$ respectively. 

\newpage

\begin{lemma}\label{lem:neg_new}
\, 
\begin{enumerate}[wide,  labelindent=0pt]
\item[(i)] For any simplex $\tau\in \mcK\cap \mcK'$, if $\ell(\tau)=1$ then $\ell'(\tau)=1$.
\item[(ii)] If $\ell'_\sigma(\sigma)=-1$ then $\ell_\sigma(\sigma)=-1$.
\end{enumerate}
\end{lemma}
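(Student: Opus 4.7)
The plan is to prove (i) by transporting a chain witnessing positivity of $\tau$ from $\mcK$ to $\mcK'$ via $f$, and then using Assumption~\ref{ass:chain_map}(2) applied to the $(k-1)$-faces of $\tau$ to show that $f(\partial\tau)$ differs from $\partial\tau$ only by a boundary in $\mcK'$. Part~(ii) will then follow by essentially the same argument applied with $\sigma$ playing the role of $\tau$, since $\mcK_\sigma(\sigma^-) \subseteq \mcK$ and $\mcK'_\sigma(\sigma^-) \subseteq \mcK'$ automatically make the existing chain map $f$ available between the strict sublevel sets at $\sigma$.

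For (i), assume $\ell(\tau) = 1$. By the characterization of positive faces recalled after Lemma~\ref{l:bettideath}, there exists $c \in C_k(\mcK(w(\tau)^-))$ with $\partial c = \partial\tau$. Setting $c' := f(c)$, Assumption~\ref{ass:chain_map}(1) gives $c' \in C_k(\mcK'(w(\tau)^-))$, and the chain-map property yields $\partial c' = f(\partial c) = f(\partial\tau)$. Every $(k-1)$-face $\tau'$ of $\tau$ lies in $\mcK \cap \mcK'$ because both complexes are closed under taking faces, so Assumption~\ref{ass:chain_map}(2) supplies $e_{\tau'} \in C_k(\mcK'(\cdot))$ with $\tau' - f(\tau') = \partial e_{\tau'}$. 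Summing with the signs from $\partial\tau = \sum_{\tau'} \pm \tau'$ produces a chain $E \in C_k(\mcK')$ with $\partial E = f(\partial\tau) - \partial\tau$, so that $\partial(c' - E) = \partial\tau$. A weight check then places $c' - E$ in $C_k(\mcK'(w'(\tau)^-))$, yielding $\ell'(\tau) = 1$.

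For (ii) I argue the contrapositive: suppose $\ell_\sigma(\sigma) = 1$. Since the strict sublevel set of $\sigma$ in $\mcK_\sigma$ equals $\mcK(w_\sigma(\sigma)^-) \subseteq \mcK$, any chain $c \in C_k(\mcK_\sigma(\sigma^-))$ with $\partial c = \partial\sigma$ already lies in $\mcK$. The standing hypothesis $\partial\sigma \subset \mcK \cap \mcK'$ guarantees Assumption~\ref{ass:chain_map}(2) is applicable to every face of $\sigma$, and the computation of (i) transplanted verbatim produces $f(c) - E \in C_k(\mcK'_\sigma(\sigma^-))$ with $\partial(f(c) - E) = \partial\sigma$, which means $\ell'_\sigma(\sigma) = 1$.

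The hard part will be the bookkeeping of weight levels: because $w$ and $w'$ need not agree on $\mcK \cap \mcK'$, one must verify that the reconciling chain $E$ lies in the strict sublevel set $\mcK'(w'(\tau)^-)$ for (i) and in $\mcK'_\sigma(\sigma^-)$ for (ii). I expect this to follow from injectivity of $w'$ combined with strict monotonicity on faces, forcing every simplex appearing in $E$ to have $w'$-value strictly below $w'(\tau)$ (respectively $w'_\sigma(\sigma)$), since the chains $e_{\tau'}$ furnished by Assumption~\ref{ass:chain_map}(2) can be confined to levels strictly below that of $\tau$ (respectively $\sigma$).
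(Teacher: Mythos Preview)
Your argument is essentially correct and close in spirit to the paper's, but the paper takes a shorter route for (i): since $\tau\in\mcK\cap\mcK'$, Assumption~\ref{ass:chain_map}(2) can be applied directly to $\tau$ rather than to its $(k-1)$-faces. Writing $\tau=f(\tau)+\partial' c_{k+1}$ and using that $\partial'(f(\tau)+f(c_k))=f(\partial(\tau+c_k))=0$, one gets $\partial'(\tau+f(c_k))=0$ immediately, with no need to assemble your reconciling chain $E$ face by face. For (ii) the paper simply invokes the contrapositive of (i) applied to $\mcK_\sigma,\mcK'_\sigma$; your explicit argument via $\mcK_\sigma(\sigma^-)\subseteq\mcK$ is arguably more transparent, since it does not silently require an extension of $f$ to a chain map $\mcK_\sigma\to\mcK'_\sigma$.

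The one point where your proposal is genuinely incomplete is the weight bookkeeping you yourself flag. Your suggested fix (``injectivity of $w'$ combined with strict monotonicity on faces'') does not close the gap: you need $f(c)\in C_k(\mcK'(w'(\tau)^-))$, but Assumption~\ref{ass:chain_map}(1) only gives $f(c)\in C_k(\mcK'(w(\tau)^-))$, and nothing in the abstract hypotheses forces $w(\tau)\le w'(\tau)$. Likewise your chains $e_{\tau'}$ live in $C_k(\mcK'(t))$ for $t\ge\max(w(\tau'),w'(\tau'))$, and $w(\tau')<w'(\tau)$ is not guaranteed. The paper's proof glosses over the same issue (it needs $f(c_k)\in C_k(\mcK'(w'(\tau)^-))$), so this is not a defect relative to the paper; in the intended application to Delaunay complexes on $\X\subseteq\Y$, weights of common simplices can only increase, which is what actually makes the level check go through.
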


\begin{proof}
Note that it suffices to prove (i) as (ii) is its contrapositive applied to $\mcK_\sigma$ and $\mcK'_\sigma$. 
The proof of (i) follows from the existence of the chain map. 
\begin{equation}\label{eq:diag1}
\begin{tikzcd}
C_k(\mcK) \arrow["f_k"]{r}\arrow["\partial"]{d} & C_k(\mcK') \arrow["\partial'"]{d}\\
C_{k-1}(\mcK) \arrow["f_{k-1}"]{r} & C_{k-1}(\mcK')
\end{tikzcd}
\end{equation} 
By $\ell(\tau) = 1$, there exists a chain $c_k\in C_k(\tau^-)$ such that $\partial (\tau+c_k) = 0$. As $f$ is a chain map, by linearity, $f(\tau+c_k) = f(\tau) + f(c_k)$, and hence by the commutativity of $\partial$ and $f$,  we have that $\partial'(f(\tau) +f(c_k)) = 0$.
By assumption, $\tau$ and $f( \tau)$ are homologous as chains, so $\tau = f(\tau) + \partial' c_{k+1}$, where $c_{k+1}$ is some chain in $C_{k+1}(\mcK')$.
Applying the boundary operator, we obtain $\partial' (\tau +f(c_k) ) =  \partial' f(\tau) +\partial'f(c_k) +\partial'\partial' c_{k+1} = 0$, implying that $\ell'(\tau)=1$. 
\end{proof}
%
%
We will require one additional lemma which describes the relationship of the four complexes. %
\begin{lemma}
\label{lem:stab_negfac}
Let $\tau\in \mcK\cap\mcK'$ such that $\ell(\tau)=-1$ and $\ell_\sigma(\tau)=1$. If $\ell'_\sigma(\sigma) = -1$,
there exists a simplex $\tau'\in \mcK'$ such that 
\begin{itemize}
\item $\ell'(\tau')=-1$ 
\item $\ell'_\sigma(\tau')=1$ 
\item $w'(\tau')\leq w(\tau)$.
\end{itemize}
\end{lemma}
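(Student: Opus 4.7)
The plan is to identify $\tau'$ as the simplex in $\mcK'$ that kills the homology class $[\partial\sigma]\in H_{k-1}(\mcK'(\sigma^-))$; the algebraic hypotheses will force such a killer to exist at filtration level at most $w(\tau)$.

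First, I would extract algebraic witnesses from the labels on $\mcK$. The conditions $\ell(\tau)=-1$ and $\ell_\sigma(\tau)=1$ say that $\partial\tau$ is not a boundary in $C_{k-1}(\mcK(\tau^-))$ but becomes one after adjoining $\sigma$; writing the witness chain and isolating its $\sigma$-component gives $\partial\tau=\alpha\,\partial\sigma+\partial c$ with $c\in C_k(\mcK(\tau^-))$, $\alpha\neq 0$, and in particular $w(\sigma)<w(\tau)$. Applying the chain map $f$, using $\partial f=f\partial$, and invoking Assumption \ref{ass:chain_map}(2) facet by facet to $\partial\sigma$ (whose facets lie in $\mcK\cap\mcK'$) yields $f(\partial\sigma)=\partial\sigma-\partial d$ for a $k$-chain $d$ in $\mcK'$ at filtration level $\le w(\tau)$. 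Substituting into $\partial f(\tau)=\alpha\,f(\partial\sigma)+\partial f(c)$, and using Assumption \ref{ass:chain_map}(1) to place $f(\tau)$ and $f(c)$ at filtration levels $\le w(\tau)$ in $\mcK'$, gives $\partial\bigl(f(\tau)-f(c)+\alpha d\bigr)=\alpha\,\partial\sigma$, so $\partial\sigma$ is a $(k-1)$-boundary in $\mcK'$ at filtration level $w(\tau)$.

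Combining this with the hypothesis $\ell'_\sigma(\sigma)=-1$, which says $\partial\sigma$ is not yet a boundary in $\mcK'$ at level $w'(\sigma)^-$, there is a well-defined earliest time $t^*\in(w'(\sigma),w(\tau)]$ at which $\partial\sigma$ becomes a boundary in the filtration of $\mcK'$. Let $\tau'$ be the unique $k$-simplex of $\mcK'$ with $w'(\tau')=t^*$. The fill of $\partial\sigma$ realised at level $t^*$ must use $\tau'$ with nonzero coefficient, giving $\partial\tau'=\beta^{-1}(\partial\sigma-\partial c_0)$ for some $c_0\in C_k(\mcK'(\tau'^-))$ and $\beta\neq 0$. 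From this one reads off $\ell'(\tau')=-1$ (since $\partial\sigma\notin B_{k-1}(\mcK'(\tau'^-))$); and $\ell'_\sigma(\tau')=1$, since in $\mcK'_\sigma(\tau'^-)$ the chain $\beta^{-1}(\sigma-c_0)$ now fills $\partial\tau'$. The weight bound $w'(\tau')=t^*\le w(\tau)$ is built into the construction.

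The principal obstacle I anticipate is careful bookkeeping of sublevel-set memberships across the chain map, since common simplices of $\mcK$ and $\mcK'$ need not carry the same weight. Invoking Assumption \ref{ass:chain_map}(2) requires choosing a filtration value (such as $t=w(\tau)$) that simultaneously contains every facet of $\sigma$ under both $w$ and $w'$. One must also handle the edge case $t^*=w(\tau)$, $\tau'=\tau$, which is consistent with the conclusion because the three assertions only constrain $\ell'$, $\ell'_\sigma$, and $w'$, not the $\mcK$-behaviour of $\tau$.
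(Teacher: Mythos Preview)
Your proposal is correct and follows the same underlying strategy as the paper: both arguments identify $\tau'$ as the $k$-simplex of $\mcK'$ at which the $(k-1)$-cycle $\partial\sigma$ first becomes a boundary in the filtration of $\mcK'$, and both bound that death time by $w(\tau)$ using the chain map $f$ together with Assumption~\ref{ass:chain_map}. The paper packages the bound as the general principle that death times of persistence classes are non-increasing under persistence module homomorphisms, and then reads off $\tau'$ as the last simplex in the bounding chain; you instead unwind this at the chain level, writing $\partial\tau=\alpha\,\partial\sigma+\partial c$, pushing through $f$, and explicitly invoking Assumption~\ref{ass:chain_map}(2) on the facets of $\sigma$ to replace $f(\partial\sigma)$ by $\partial\sigma$ up to a boundary. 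Your route is more elementary and self-contained (it does not appeal to any black-box statement about persistence modules), and it surfaces the filtration bookkeeping issue that the paper's proof leaves implicit --- namely, that one must work at a level $t$ (e.g.\ $t=w(\tau)$) at which the facets of $\sigma$ lie in both $\mcK(t)$ and $\mcK'(t)$; this is harmless in the Delaunay application but is exactly the right thing to flag.
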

\begin{proof}
%
The assumptions imply that $\partial \sigma$ is a non-trivial cycle prior to the insertion of $\sigma$ in $C_k(\mcK)$, and so represents a non-trivial homology cycle in which must persist from at least $(a,w(\sigma))$ for some $a\leq w(\sigma)$. Note that this may represent an ephemeral class, i.e. $a=w(\sigma)$.  We observe that  $\ell'_\sigma(\sigma)=-1$ implies $\ell_\sigma(\sigma)=-1$ and $f(\sigma)=\sigma$. Note that this implies the weight of $\sigma$ remains unchanged. 

The assumptions further imply that $\partial \tau$ is homologous to $\partial \sigma$ in $\mcK(\tau^-)$. This can be seen since $\ell(\tau)=-1$, the chain $\partial\tau$ is a non-trivial homology class in $\mcK(\tau^-)$. However, since $\ell_\sigma(\tau)=1$, $\tau$ bounds the cycle $\partial_k\sigma$ in $\mcK$ and so the homology class $[\partial \sigma]$ is non-trivial until 
$w(\tau)$. 
Now we show the existence of $\tau'$. Consider the following commutative diagram 
\begin{equation}\label{e:commdiag1}
\begin{tikzcd}
\Hg_k (\mcK)\arrow["f"]{d} \arrow["\varphi"]{r} & \Hg_k (\mcK_\sigma)\arrow["f'"]{d}\\
\Hg_k (\mcK') \arrow["\varphi'"]{r} & \Hg_k (\mcK'_\sigma) 
\end{tikzcd}
\end{equation}
where $\varphi,\varphi',f,f'$ are the induced maps on homology groups. 
Consider the homology class $[\partial \sigma]$, i.e. the equivalence class of the cycle $\partial_k\sigma$. By assumption, as $\sigma$ is a negative simplex in all four complexes, it follows that $\partial\sigma$ is a representative of a non-trivial persistent homology class in an interval $(a,w(\sigma))$ for some $a\leq w(\sigma)$ in all four complexes. 
We remark that the bounds on the birth time, i.e. $a$, need not be the same in the four spaces, but are all upper-bounded by the value in $\Hg_k(\mcK)$.

By the properties of persistence module homomorphisms, the death time of an image (under $f$) of a persistence class is non-increasing. Since $[\partial \sigma]$ is trivial in $\Hg_k(\mcK')$ at $w(\tau)$, it follows that 
there must exist a bounding chain for $\psi([\partial \sigma])$ in $\Hg_k(\mcK')$ at or before $w(\tau)$. Note that this upper bound, is with respect to the filtration value in $\mcK$.
Set $\tau'$ to be the last simplex in this bounding chain (where the ordering is induced by $w'$). By construction, it is negative in $\mcK$ and by the insertion of $\sigma$ in $\mcK'$, it creates a new cycle -- as it bounds $\partial \sigma$, hence adding $\sigma$ creates a cycle, completing the proof. 
\end{proof}

\subsection{Existence of chain maps between Delaunay complexes}
\label{s:lem_del}

In order to use the results of the previous section, we need existence of chain maps between Delaunay complexes built on two finite point sets $\X \subseteq \Y$ such that they satisfy the assumptions in Assumption \ref{ass:chain_map}. We assume $\X,\Y$ are \emph{generic} in the sense that there are no co-linear or cocircular points. We first recall a few standard definitions and facts which are used in this section.
\begin{definition}
Given an open cover $\mathcal{U} := \{\mathcal{U}_i\}_{i\in \mathcal{I}}$ of a closed subset $ U \subset \mathbb{R}^d$, the nerve $\mathcal{N}(\mathcal{U})$ is the set of finite subsets of $\mathcal{I}$ defined as follows. A finite set $I\subseteq \mathcal{I}$ belongs to $\mathcal{N}(\mathcal{U})$ if and only if the intersection of the $\mathcal{U}_i$ whose indices are in $I$, is non-empty, i.e. 
$$\mathcal{U}_I = \bigcap\limits_{i\in I} \mathcal{U}_i \neq \emptyset$$
If $I$ belongs to $\mathcal{N}(\mathcal{U})$, then so do all of its subsets making $\mathcal{N}(\mathcal{U})$ an abstract simplicial complex. 
If a cover satisfies the condition that all finite non-empty intersections are contractible, then it is said to be a \emph{good cover}.
\end{definition}
%
If an open cover is good, the corresponding nerve captures the same topological information combinatorially. This is formalized in the Nerve Theorem; see \cite[Theorem 10.7]{bjorner1995topological} for example.
\begin{theorem} If $\mathcal{U}$ is a good open cover of a closed subset $U \subset \mathbb{R}^d$, then $U$ is homotopy equivalent to the nerve $\mathcal{N}(U)$.
\end{theorem}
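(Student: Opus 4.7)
The plan is to realize both $U$ and the geometric realization $|\cN(\cU)|$ as deformation retracts of a common auxiliary space, namely the Mayer--Vietoris blow-up
\[
B := \Bigl(\bigsqcup_{I \in \cN(\cU)} \cU_I \times \Delta^I\Bigr) \big/ \sim,
\]
where, for $J \subseteq I$ and $\alpha \in \Delta^J \subseteq \partial \Delta^I$, one identifies $(x,\alpha) \in \cU_I \times \Delta^I$ with $(x,\alpha) \in \cU_J \times \Delta^J$ (noting $\cU_I \subseteq \cU_J$). There are two natural projections $\pi : B \to U$ and $q : B \to |\cN(\cU)|$, and the strategy is to show both are homotopy equivalences.

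First I would show $\pi$ is a homotopy equivalence. Its fiber over $x \in U$ is the closed simplex spanned by $I(x) := \{i : x \in \cU_i\}$, which is nonempty (since $\cU$ covers $U$) and convex, hence contractible. Because $U \subseteq \mR^d$ is paracompact, a locally finite continuous partition of unity $\{\rho_i\}$ subordinate to $\cU$ exists; the map $s(x) := \bigl(x, \sum_i \rho_i(x) e_i\bigr)$ is a well-defined section of $\pi$ (the support condition forces $\{i : \rho_i(x) > 0\} \subseteq I(x)$, so the barycentric combination lies in the fiber-simplex), and the straight-line homotopy inside each fiber-simplex from $(x,\alpha)$ to $s(x)$ gives a deformation retraction of $B$ onto $s(U) \cong U$.

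Next, $q$ is a homotopy equivalence. For each $I \in \cN(\cU)$, the preimage under $q$ of the open star of the barycenter of $\Delta^I$ deformation retracts onto $\cU_I$, which is contractible by the \emph{good cover} hypothesis. A skeleton-by-skeleton induction on $|\cN(\cU)|$ using Mayer--Vietoris gluing (equivalently, Quillen's Theorem A applied to the diagram $I \mapsto \cU_I$ while viewing $B$ as its homotopy colimit) upgrades these local contractibility statements into the global claim that $q$ is a weak homotopy equivalence, and hence a genuine homotopy equivalence since both $B$ and $|\cN(\cU)|$ have the homotopy type of a CW complex.

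The main obstacle is precisely this last upgrade from pointwise or piecewise contractibility to a global homotopy equivalence: the good-cover hypothesis only gives contractibility of each $\cU_I$ in isolation, so promoting it to a statement about the whole blow-up requires either the homotopy-colimit machinery or an explicit inductive gluing. Once both $\pi$ and $q$ are known to be homotopy equivalences, the composition of $q$ with the section $s$ yields the desired homotopy equivalence $U \simeq |\cN(\cU)|$.
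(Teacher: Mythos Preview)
The paper does not supply a proof of this theorem; it merely quotes it as a known result with a reference to Bj\"orner's survey. Your sketch is the standard argument via the Mayer--Vietoris blow-up (equivalently, realizing both $U$ and $|\cN(\cU)|$ as the homotopy colimit of the diagram $I \mapsto \cU_I$), as in Hatcher's \emph{Algebraic Topology}, Section~4.G, or Segal's original treatment. As a sketch it is correct: the section $s$ via a partition of unity and the straight-line homotopy in the simplex fibers handle $\pi$ cleanly, and you correctly identify that the only substantive step is upgrading the local contractibility of each $\cU_I$ to the global statement that $q$ is a weak equivalence. In a complete write-up you would want to make that induction or the homotopy-colimit lemma explicit, and to justify that $B$ has CW homotopy type (which follows here since $U$, being a subset of $\mR^d$, is an ANR). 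But there is nothing to compare against in the paper itself.
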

We show that the sequence of simplex deletions and additions which occur when adding a point to $\X$ can be realized as a sequence of topological spaces with the Voronoi cells inducing a good cover of each space. This allows us to pass back and forth from the combinatorial description of the Delaunay complex to the geometric realization of the union of balls around point in $\X$ via the Nerve Theorem. This allows us to relate the Delaunay complexes on point sets $\X \subset \Y$, by defining a chain map which meets the requirements of Assumption \ref{ass:chain_map}. We remark more on the necessity for considering such a sequence of complexes at the end of this subsection.

First, we use the fact the Delaunay complex is the nerve of the cover induced by the Voronoi diagram of a generic point set $\mathcal{X}$. Define the $\varepsilon$-offset of the Voronoi cell of a point $x\in\mathcal{X}$ as 
$$\Vor^\varepsilon_\mathcal{X}(x) = \{p\in\mathbb{R}^d : \exists y \in \Vor_\mathcal{X}(x) \, \mbox{such that} \, |p-y|<\varepsilon \}$$ 
We now give a straightforward result whose proof will be sketched for completeness.
\begin{lemma}\label{lem:voronoi}
For $\varepsilon>0$ small enough, the cover induced by 
$$\Vor^\varepsilon_\mathcal{X}(x)\cap B_r(x) \qquad \forall x \in \X$$ 
form a good cover of the $\bigcup_{x\in \X} B_r(x)$ for all $r$. We call this cover {\em the Voronoi-induced cover} of $\X$.
\end{lemma}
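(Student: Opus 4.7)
The plan is to verify directly the three defining properties of a good open cover: (i) each $U_x := \Vor^\varepsilon_\X(x) \cap B_r(x)$ is open; (ii) the family $\{U_x\}_{x \in \X}$ covers $\bigcup_{x \in \X} B_r(x)$; and (iii) every non-empty finite intersection $U_I := \bigcap_{x \in I} U_x$ is contractible. The central geometric input is that each Voronoi cell $\Vor_\X(x)$ is a closed convex polyhedron, since it is the intersection of the half-spaces $\{y : |y - x| \le |y - x'|\}$ indexed by $x' \in \X$. Consequently, the offset $\Vor^\varepsilon_\X(x)$ equals the Minkowski sum of this convex set with the open $\varepsilon$-ball about the origin, and is therefore open and convex; since $B_r(x)$ is open and convex, each $U_x$, and hence each non-empty $U_I$, is open and convex as well.

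For coverage, given any $y \in \bigcup_{x \in \X} B_r(x)$, fix some $x_0 \in \X$ with $y \in B_r(x_0)$ and let $x^* \in \X$ minimise $|y - x^*|$; such an $x^*$ exists by local finiteness of $\X$. Then $|y - x^*| \le |y - x_0| < r$, so $y \in B_r(x^*)$, and by definition $y \in \Vor_\X(x^*) \subseteq \Vor^\varepsilon_\X(x^*)$, placing $y$ in $U_{x^*}$. Combined with the elementary fact that any non-empty open convex subset of $\mR^d$ is star-shaped about each of its points and hence contractible, this will give the good-cover conclusion.

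The ``$\varepsilon$ small enough'' clause deserves a brief comment. For the good-cover property alone, the convexity argument above shows that any $\varepsilon > 0$ suffices, uniformly in $r$. The smallness hypothesis is presumably needed for subsequent applications of this lemma, where one also wants the nerve $\cN(\{U_x\})$ to agree combinatorially with the Delaunay complex on $\X$ truncated at radius $r$. The generic-position assumption (no $d+2$ cocircular points, no $d+1$ colinear points) implies that any two Voronoi-face intersections which are disjoint inside a bounded region are separated by a strictly positive distance, and by local finiteness of $\X$ only finitely many such separations are relevant inside any given bounded region; taking $\varepsilon$ strictly below all of them ensures that the $\varepsilon$-thickening creates no spurious intersections. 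The main obstacle, therefore, is the bookkeeping behind this uniform-in-$r$ choice of $\varepsilon$; the topological content — contractibility of the non-empty intersections — is immediate from convexity.
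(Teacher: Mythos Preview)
Your proof is correct and follows essentially the same route as the paper's: both arguments rest on the observation that the $\varepsilon$-offset of a Voronoi cell is convex (as the Minkowski sum of a convex polyhedron with an open ball), so intersecting with the convex ball $B_r(x)$ yields convex cover elements whose finite intersections are again convex and hence contractible. Your treatment is more thorough---you verify openness and coverage explicitly, and you correctly note that the smallness of $\varepsilon$ is needed only so that the nerve matches the Delaunay complex combinatorially, not for the good-cover property itself---whereas the paper gives a brief sketch along the same lines.
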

\begin{proof}
As $\X$ is finite, we can choose $\varepsilon>0$ small enough such that the intersections of the $\varepsilon$-offsets of the Voronoi cells are in one-to-one correspondence with the adjacencies of the Voronoi cells.
The Voronoi cells (and their $\varepsilon$-offsets) are convex, hence they form a good cover of the full space. Additionally, the balls centered on the points are also convex, hence all intersections between balls and cells are either empty or convex, and hence contractible, implying the result. 
\end{proof}
Note that the choice of $\varepsilon$ depends on the point set and can be arbitrarily small. We simply require that for any finite generic point set (without multiplicity), an $\varepsilon>0$ exists. 
Assume that $\X \subset \Y$ are generic finite point sets as above. First, we define 
$$\X' = \X\cup\{\0\}, \qquad \qquad \Y' = \Y \cup\{\0\}, $$
and let $\mcK = \mcD(\X)$, $\mcL = \mcD(\Y)$ the Delaunay complex of $\X, \Y$ respectively. Similarly let $\mcK'$ and $\mcL'$ be the Delaunay complexes on $\X'$ and $\Y'$ respectively.  Observe that $\mcK,\mcK',\mcL,\mcL'$ are complexes that correspond to Voronoi-induced covers on $\X,\X',\Y,\Y'$ respectively. 

We assume that 
\begin{equation}
\label{e:KLdiffass}
\mcK \triangle \mcK' =\mcL \triangle \mcL' = \{\sigma_0,\sigma_1,\ldots,\sigma_k\}.
\end{equation}
We can choose $\varepsilon$ small enough, but dependent on the point sets $\X,\Y$, so that the nerve of this cover is combinatorially equivalent to the Delaunay complex. This follows from the finiteness of the point sets. 

There exists an ordering of the the simplices in \eqref{e:KLdiffass} such that 
$$\mcK^i = \begin{cases} \mcK^{i-1} - \sigma_i & 0 < i \leq l_1 \\ \mcK^{i-1} \cup \sigma_i & l_1 <i \leq l\end{cases} \quad ; \quad \mcL^i = \begin{cases} \mcL^{i-1} - \sigma_i & 0 < i \leq l_1 \\ \mcL^{i-1} \cup \sigma_i & l_1 <i \leq l \end{cases}$$ 
\begin{align}
\mcK & = \mcK^0 \supseteq \mcK^1 \supseteq \ldots \supseteq\mcK^{l_1} = \mcK \cap \mcK' \subseteq \mcK^{l_1+1} \subseteq \ldots \subseteq \mcK^{l} = \mcK', \nonumber \\
\label{e:insdelDnD'n} \mcL & = \mcL^0 \supseteq \mcL^1 \supseteq \ldots \supseteq\mcL^{l_1} = \mcL \cap \mcL' \subseteq \mcL^{l_1+1} \subseteq \ldots \subseteq \mcL^{l} = \mcL'.
\end{align}
In proving the main result of this section, we recount the following result
\begin{theorem}(\cite[Theorem 5.10]{bauer2017morse}) \label{thm:bauer}
 Let $\X$ be a finite set of points in general position in $\mathbb{R}^d$, then there is a simplicial collapse from the Delaunay-\v Cech complex to the Delauany complex for any weight $w\in \mathbb{R}$. 
\end{theorem}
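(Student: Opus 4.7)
The plan is to establish the collapse via discrete Morse theory: I would construct a discrete gradient vector field $V$ on the sublevel Delaunay-\v{C}ech complex $\mcK^{w'}(t) := \{\sigma : w'(\sigma) \leq t\}$ such that the unpaired (critical) simplices are precisely the simplices of the Delaunay sublevel set $\mcD(\X)(t)$, and such that every matched pair consists of simplices of equal $w'$-weight. Once this is in place, the fundamental theorem of discrete Morse theory produces a sequence of elementary collapses from $\mcK^{w'}(t)$ down to $\mcD(\X)(t)$, proving the theorem for each fixed $t = w$.

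The matching I would use is the \emph{closest-point} pairing. For $\sigma = [x_0,\ldots,x_k] \in \mcK^{w'}(t)$, let $c_\sigma$ denote the center of the smallest enclosing ball of its vertices; by definition of $w'$, this ball has radius $w'(\sigma)$. Either $c_\sigma \in \bigcap_i \Vor_\X(x_i)$, in which case $\sigma \in \mcD(\X)(t)$ and we declare $\sigma$ critical, or else there is a unique $y \in \X \setminus \sigma$ whose Voronoi cell contains $c_\sigma$. In the latter case I would pair $\sigma$ with the coface $\sigma \cup \{y\}$ (viewing the pair as $\sigma \nearrow \sigma \cup \{y\}$ in the Hasse diagram of the complex). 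The key geometric observation is that when $c_\sigma \in \Vor_\X(y)$, the point $y$ lies on the boundary sphere of the enclosing ball of $\sigma$, so the smallest enclosing balls of $\sigma$ and $\sigma \cup \{y\}$ coincide, giving $w'(\sigma) = w'(\sigma \cup \{y\})$. This simultaneously ensures that the pairing is internal to $\mcK^{w'}(t)$ and that it preserves the filtration value.

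Verifying that this yields a valid discrete Morse gradient requires three checks. First, well-definedness: the prescription assigns the pair $\{\sigma, \sigma \cup \{y\}\}$ independently of whether one approaches it from below (starting at $\sigma$) or from above (starting at $\sigma \cup \{y\}$, whose smallest enclosing ball has the same center $c_\sigma$ realized by a unique non-Delaunay facet). Second, the matched pairs are disjoint, because each non-critical simplex has a canonically associated nearest exterior point. Third, acyclicity of $V$-paths, which I expect to be the main obstacle: one must show that no sequence $\sigma_1 \nearrow \tau_1 \searrow \sigma_2 \nearrow \tau_2 \searrow \cdots \searrow \sigma_1$ can close up. The natural strategy is to exhibit a strict monovariant along gradient paths, e.g.\ a lexicographic order on (radius of enclosing ball, squared distance from $c_{\sigma_i}$ to the vertex set of $\sigma_i$), using that passing from $\tau$ to a facet $\sigma'$ via the closest-point rule strictly changes the geometric configuration in a controlled way.

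Once acyclicity is established, the critical simplices of $V$ are by construction exactly those $\sigma$ with $c_\sigma \in \bigcap_i \Vor_\X(x_i)$, i.e.\ the Delaunay complex at level $w$. The collapsibility of $\mcK^{w'}(w)$ onto $\mcD(\X)(w)$ then follows from the standard correspondence between discrete gradient fields and sequences of elementary collapses (see, e.g., Forman's theorem). Since $t = w$ was arbitrary, this yields the simplicial collapse for every real weight, which is the claim of the theorem.
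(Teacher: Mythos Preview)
The paper does not prove this theorem; it is quoted directly from \cite[Theorem~5.10]{bauer2017morse} and used as a black box inside the proof of Lemma~\ref{lem:chainKnm}. So there is no in-paper argument to compare against. Your overall strategy---a discrete gradient on the Delaunay--\v{C}ech sublevel set whose critical cells are the Alpha sublevel set---is indeed the approach of Bauer--Edelsbrunner, but the specific matching you write down has genuine gaps.

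The main problem is that your dichotomy is false. You claim that either $c_\sigma \in \bigcap_i \Vor_\X(x_i)$, or else the unique nearest point of $\X$ to $c_\sigma$ lies in $\X\setminus\sigma$. But for an ``obtuse'' simplex---one whose smallest enclosing sphere does not pass through all its vertices---the nearest point of $\X$ to $c_\sigma$ is an \emph{interior} vertex $x_j\in\sigma$, while $c_\sigma\notin\Vor_\X(x_i)$ for the vertices $x_i$ lying on the sphere. Concretely: take three points in $\mR^2$ forming an obtuse triangle, no other points. The triangle $\sigma$ is Delaunay, $c_\sigma$ is the midpoint of the long edge, $c_\sigma$ lies in the Voronoi cell of the obtuse vertex, and there is no $y\in\X\setminus\sigma$ at all. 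Your rule neither matches $\sigma$ nor declares it critical. Consequently your critical set is exactly the simplices with $w(\sigma)=w'(\sigma)$ (the ``Gabriel'' simplices), which is strictly smaller than the Alpha sublevel set $\{w\le t\}$ you are aiming for; even if one could complete the matching, it would collapse past the target.

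Two further issues. First, you never check that the coface $\sigma\cup\{y\}$ is itself a Delaunay simplex, so it may not lie in $\mcK^{w'}(t)$. Second, your ``key geometric observation'' is misstated: $y$ nearest to $c_\sigma$ only puts $y$ \emph{inside} the enclosing ball of $\sigma$, not on its boundary sphere (the conclusion $w'(\sigma)=w'(\sigma\cup\{y\})$ still holds, for this corrected reason). The fix in \cite{bauer2017morse} is to pass to \emph{generalized} discrete Morse theory, partitioning the face poset into intervals $[\sigma_-,\sigma_+]$ where $\sigma_-$ records the vertices on the enclosing sphere and $\sigma_+$ adds all points of $\X$ strictly inside it; these intervals handle the obtuse case and the coface-existence issue simultaneously, and a separate argument relates them to the Alpha filtration.
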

\begin{lemma}
\label{lem:chainKnm}
For each $i$, there exists a chain map 
$$f_i:\bigoplus\limits_k C_k(\mcK^{i}) \rightarrow \bigoplus\limits_k C_k(\mcL^{i}),$$
which satisfies the conditions listed in Assumption \ref{ass:chain_map}.
\end{lemma}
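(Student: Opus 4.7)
The plan is to construct the chain maps $f_i$ inductively along the sequence \eqref{e:insdelDnD'n}, beginning with a base chain map $f_0\colon \bigoplus_k C_k(\mcK)\to \bigoplus_k C_k(\mcL)$ and then restricting or extending at each step. The geometric engine is the Nerve Theorem applied to the Voronoi-induced good covers (Lemma \ref{lem:voronoi}): at each filtration value $t$, the complexes $\mcK(t)$ and $\mcL(t)$ are homotopy equivalent to the ball unions $\bigcup_{x\in\X}B_t(x)$ and $\bigcup_{y\in\Y}B_t(y)$ respectively, and the inclusion of ball unions induced by $\X\subseteq\Y$ realizes the sought chain map at the homotopy level.

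For the base case, I would realize this homotopy-level map explicitly by routing through an ambient complex on $\Y$, e.g.\ the \v{C}ech complex on $\Y$, which receives filtered inclusions from both $\mcK$ (using the inequality $w'(\sigma)\leq w(\sigma)$ for Delaunay simplices) and $\mcL$. The second inclusion induces an isomorphism on homology at every filtration value because both $\mcL(t)$ and the ambient complex at level $t$ have the nerve-theoretic homotopy type of $\bigcup_{y\in\Y}B_t(y)$. Inverting it up to filtered chain homotopy — for instance via the Acyclic Carrier Theorem with contractible carriers taken from the Voronoi-induced cover of $\Y$ — yields $f_0$. Condition (i) of Assumption \ref{ass:chain_map} is automatic from the filtered nature of each factor, and (ii) follows because any $\sigma\in\mcK(t)\cap\mcL(t)$ has the same image in the ambient complex under both inclusions, so the inversion returns a chain homologous to $\sigma$ in $C_k(\mcL(t))$.

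The inductive step leverages the hypothesis \eqref{e:KLdiffass} that the same ordered set $\{\sigma_1,\dots,\sigma_l\}$ governs both transitions. I would first refine $f_0$ (again via the Acyclic Carrier Theorem) so that the images of simplices in $\mcK\cap\mcK'$ are chains in $\mcL\cap\mcL'$ that avoid the finite transition set $\{\sigma_1,\dots,\sigma_l\}$; this is possible because the transition set is localized near $\0$ in the Voronoi diagram (Proposition \ref{prop:const_diff}), so carriers can be steered around it. With this refined $f_0$, define $f_i$ as follows. In the deletion range $1\leq i\leq l_1$, set $f_i := f_{i-1}\vert_{C_*(\mcK^i)}$; the image lies in $C_*(\mcL^i)$ because every surviving simplex of $\mcK^i$ lies in $\mcK\cap\mcK'$, whose image avoids $\sigma_i$ by construction. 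In the addition range $i>l_1$, extend by $f_i(\sigma_i):=\sigma_i$ and $f_i\vert_{C_*(\mcK^{i-1})}:=f_{i-1}$; the chain map identity $\partial\sigma_i = f_i(\partial\sigma_i)$ then holds since the faces of $\sigma_i$ are either in $\mcK\cap\mcK'$ (fixed by the refinement of $f_0$) or among the previously added $\sigma_j$ with $j<i$ (fixed by this very rule).

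The principal technical obstacle is the refinement of $f_0$, which must simultaneously (a) avoid the finite transition set in all images on the common portion and (b) fix simplices that lie in both $\mcK\cap\mcK'$ and $\mcL\cap\mcL'$. The Acyclic Carrier Theorem accommodates both requirements because $\{\sigma_1,\dots,\sigma_l\}$ is finite and geometrically localized, leaving enough contractible Voronoi-induced carriers to steer around it, while the self-homologous condition on common simplices is compatible with identity-type carriers. Once the refinement is secured, both conditions of Assumption \ref{ass:chain_map} propagate through the induction: filtration compatibility because $f_0$ is filtered and restriction/extension preserve filtrations, and the homology condition because added simplices are trivially self-homologous and the remaining simplices inherit the property from $f_0$.
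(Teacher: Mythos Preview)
Your inductive strategy is genuinely different from the paper's. The paper does not build $f_i$ from $f_{i-1}$ by restriction or extension; instead, for each $i$ it constructs auxiliary topological spaces $X^i(r)\subseteq Y^i(r)$ by excising small neighbourhoods of the Voronoi faces dual to the deleted simplices from the clipped ball unions $X_r,Y_r$. The Voronoi-induced cover of these surgered spaces is still good and has nerve exactly $\mcK^i$ (resp.\ $\mcL^i$), so the composite $C_*(\mcK^i)\to C_*(X^i(r))\hookrightarrow C_*(Y^i(r))\to C_*(\mcL^i)$ furnishes $f_i$ directly, with filtration compatibility coming for free from the geometric inclusion $X^i(r)\subseteq Y^i(r)$ at every scale. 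Condition~(2) of Assumption~\ref{ass:chain_map} is then checked by interposing the Delaunay--\v Cech complex and invoking the collapse of Theorem~\ref{thm:bauer}.

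Your argument has a concrete error in the deletion range. The claim that ``every surviving simplex of $\mcK^i$ lies in $\mcK\cap\mcK'$'' is false for $i<l_1$: the simplices $\sigma_{i+1},\dots,\sigma_{l_1}$ remain in $\mcK^i$ and belong to $\mcK\setminus\mcK'$, so your refinement~(a) says nothing about where $f_0$ sends them, and hence nothing guarantees that the restricted map lands in $C_*(\mcL^i)$. You could try to patch this by also demanding $f_0(\sigma_j)=\sigma_j$ for $j\le l_1$ (these simplices do lie in $\mcK\cap\mcL$), but that runs into the next issue.

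The more structural problem is filtration compatibility. The Alpha weight of a common simplex is non-decreasing when the ambient point set is enlarged (Voronoi cells only shrink), so for $\sigma\in\mcK\cap\mcL$ one has $w_\X(\sigma)\le w_\Y(\sigma)$, possibly strictly. Consequently, forcing $f_0(\sigma)=\sigma$ exactly --- which is what your condition~(b) and the rule $f_i(\sigma_i):=\sigma_i$ in the addition range require for the chain-map identity $\partial\sigma_i=f_i(\partial\sigma_i)$ --- need not respect sublevel sets: $\sigma\in\mcK(t)$ does not imply $\sigma\in\mcL(t)$. The Acyclic Carrier Theorem will give you a filtered map, or it will give you one that fixes designated simplices, but not both in general. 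The paper's geometric route bypasses this tension because the inclusion $X^i(r)\subseteq Y^i(r)$ holds at every radius regardless of how the induced nerve weights compare.
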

\begin{proof}
 Define
\begin{align*}
X_r &= \bigcup\limits_{p \in \X} B_r(p)\cap \Vor_\X(p)\\
Y_r &= \bigcup\limits_{q \in \Y} B_r(p)\cap \Vor_\Y(q) 
\end{align*}
By standard results for the Delaunay complex, there exist chain maps for $i=0,L$ i.e., 
\begin{equation}
\label{e:chainmapsKnm}
C_*(\mcK_r) \xhookrightarrow{\alpha} C_*(X_r) \xhookrightarrow{\beta} C_*(Y_r) \xrightarrow{\gamma} C_*(\mcL_r).
\end{equation}
Note that from this point on, we omit the radius/weight parameter as $\alpha$ and $\gamma$ are homotopy equivalences for any value of the radius and they commute with inclusion, e.g. ~\cite{bauer2017morse}.
For $i\neq 0,L'$, the homotopy equivalences are no longer obvious as the underlying complex is not simply the nerve of a cover of the space, as we are deleting and adding simplices. To prove the existence of chain maps, we introduce a sequence of topological spaces where the nerve of the Voronoi-induced cover is $\mcK^i$ and forms a good cover, implying the existence of the homotopy equivalences and the chain maps. 
We define the topological spaces corresponding to $X^i(r)$ and $Y^i(r)$.

For each simplex $\tau$ in a Delaunay complex, there is a corresponding face in the Voronoi diagram. Denote this face by $\nu^\X(\tau)$ and its $\varepsilon$-offset by $\nu^\X_\varepsilon(\tau)$. 
\begin{align*}
X^i(r)&= \bigcup\limits_{p \in \X} B_r(p)\cap \Vor_{\P}(p)-\bigcup\limits_{\tau \in \sigma_i} \nu^\X_\varepsilon(\tau)\\
Y^i(r)&= \bigcup\limits_{q \in \Y} B_r(q)\cap \Vor_\Y(q)-\bigcup\limits_{\tau \in \sigma_i} \nu^\Y_\varepsilon(\tau)
\end{align*} 
where $\varepsilon$ is arbitrarily small and depends on $\X$ and $\Y$. As $\nu^\Y(\tau) \subseteq \nu^\X(\tau)$, it follows that $X^i(r)\subseteq Y^i(r)$. 

By construction the nerves of these covers are $\mcK^i$ and $\mcL^i$ respectively, as the intersections corresponding to missing simplices (and their cofaces) are empty, since they have been removed from the space. It remains a good cover, as all the cover elements and non-empty intersections are star-shaped and so contractible. We can now define the chain map 
$$\psi_i = \gamma_i \circ \beta_i \circ \alpha_i$$
Applying the standard good cover homotopy equivalence argument to these intermediate spaces we obtain a chain map. This proves the existence of a chain map satisfying Assumption \ref{ass:chain_map}(1).

To show that the map satisfies Assumption~\ref{ass:chain_map} (2), we show that the following diagram for $s<t$,
\begin{center}
\begin{tikzcd}
\mcK^i_s \arrow{r}{f_s}\arrow{d} & \mcL^i_{s}\arrow{d}\\
\mcK^i_t \arrow{r}{f_t} & \mcL^i_{t}
\end{tikzcd}
\end{center}
commutes up to chain homotopy, which implies the that $f(\sigma)$ and $\sigma$ are homologous. If $\sigma\in \mcK^i_s \cap \mcL^i_s$, then $f_s(\sigma)=f_t(\sigma)=\sigma$, and the diagram commutes. 
However, we must also show that the diagram commutes when $\sigma\in \mcL^i_t\backslash \mcL^i_s$. In this case, we insert an intermediate step in the above diagram using the Delaunay-\v Cech complex, which we denote by $\mcD\mcC$. We observe that it can be constructed as a cover of the $i$-th space, so we denote the corresponding complex by $\mcD\mcC^i$. This follows from the fact that  $\mcD\mcC$ is equal to the Delaunay complex, with a different weight function (see  Definition \ref{def:delcomp}). Including the complex in the diagram, we obtain
\begin{center}
\begin{tikzcd}
\mcK^i_s \arrow{r}\arrow{d} & \mcD\mcC^i_s \arrow{r}\arrow{d} &\mcL^i_{s}\arrow{d}\\
\mcK^i_t \arrow{r} & \mcD\mcC^i_t\arrow{r}&\mcL^i_{t}
\end{tikzcd}
\end{center}
As the weight function for the Delaunay-\v Cech cannot decrease all the maps in the left-hand square are inclusions, so the square commutes up to homotopy. By Theorem~\ref{thm:bauer}, the horizontal maps  $\mcD\mcC^i \rightarrow \mcL^i$ are simplicial collapses which commute with inclusions, hence the original diagram commutes up to homotopy.


\end{proof}

\begin{remark}
At first glance, the intermediate step of the spaces may seem unnecessary as one could pass to the \v Cech complex rather than the topological spaces. However, in this case inserting and removing simplices is complicated by the fact that we need to insert and remove the corresponding cofaces in the \v Cech complex, which no longer corresponds to the cover by balls of a given radius complicating the analysis. By constructing the sequence of spaces, we are able to define the appropriate maps for all intermediate steps which allows us to use the results of the previous section.
\end{remark}

\subsection{Stabilization of Poisson Delaunay Complexes}
\label{s:stabPoisDel}

To use the approach of Section \ref{s:stabchainmaps} for showing stabilization of death times in Poisson-Delaunay complex, we need to establish \eqref{e:KLdiffass} which is essentially the  stabilization of simplices in the Poisson-Delaunay complex. Recall the set of boxes $\mathfrak{A}$ defined before Theorem \ref{thm:clt_Duy17}.
\begin{prop}
\label{prop:const_diff}
Let $\P$ be a stationary Poisson point process of unit intensity and $A_n \in \mathfrak{A}$ be a sequence of sets such that $A_n \to \mR^d$. Let $\mcD_n = \mcD(\P \cap A_n)$ and $\mrD_n = \mcD((\P \cap A_n) \cup \{\0\})$ be the sequence of Delaunay complexes on $\P \cap A_n$ and  $(\P \cap A_n) \cup \{\0\}$, $n \geq 1$ respectively. There exist a.s. finite random variables $L,M$ such that for all $n > M$,
$$ D_n \bigtriangleup \mrD_n = \{\sigma_1,\ldots,\sigma_L\}$$
and furthermore $w(\sigma_i) \leq M$ for all $i \leq L$. 
\end{prop}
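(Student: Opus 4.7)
The plan is to first bound the limit $\mcD(\P) \triangle \mcD(\P \cup \{\0\})$---showing it is a.s.\ finite with spatial extent and weights controlled by a random but a.s.\ finite $R$---and then to exploit the local determinism of the Poisson-Delaunay complex to show that $\mcD(\P \cap A) \triangle \mcD((\P \cap A) \cup \{\0\})$ coincides with this limit whenever $A \in \mathfrak{A}$ contains a sufficiently large ball around $\0$. The assumption $A_n \to \mR^d$ then translates the latter condition into a threshold on $n$.

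First I will decompose $\mcD(\P) \triangle \mcD(\P \cup \{\0\})$ via Voronoi--Delaunay duality into two classes: \emph{star-type} simplices containing $\0$ as a vertex (dual to Voronoi vertices of the cell $V_0 := \Vor_{\P \cup \{\0\}}(\0)$) and \emph{killed-type} simplices $\sigma \subset \P$ whose dual Voronoi face in $\Vor(\P)$ lies inside $V_0$. Standard properties of the stationary Poisson--Voronoi tessellation give that $V_0$ is a.s.\ a bounded convex polytope, say $V_0 \subset B(\0, R_1)$; hence both classes are a.s.\ finite, so $L < \infty$ a.s. For a killed simplex and any $v$ in its dual Voronoi face, the inequalities $|v - \0| \leq |v - x_i| \leq |v - y|$ (for each vertex $x_i$ and every $y \in \P$, coming from $v \in V_0 \cap \Vor(\P)$) identify $B(v, |v - x_i|)$ as an empty ball of $\P$ containing $\0$; hence $|v - x_i| \leq d_0$, where $d_0 < \infty$ a.s.\ is the distance from $\0$ to its nearest $\P$-neighbor. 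Putting this together, every simplex in $\mcD(\P) \triangle \mcD(\P \cup \{\0\})$ has its vertices and circumball inside $B(\0, R)$ and weight $\leq R$, with $R := R_1 + d_0$.

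Next I will show that for $A \in \mathfrak{A}$ with $A \supseteq B(\0, R')$ and $R'$ a.s.\ finite and large enough, the two symmetric differences coincide. The inclusion $\mcD(\P) \triangle \mcD(\P \cup \{\0\}) \subseteq \mcD(\P \cap A) \triangle \mcD((\P \cap A) \cup \{\0\})$ is immediate: each limiting simplex has circumball inside $B(\0, R) \subseteq A$, so the emptiness conditions transfer between $\P$ and $\P \cap A$. The reverse inclusion requires two ingredients: (a) once $A \supseteq B(\0, 2R_1)$, one has $\Vor_{(\P \cap A) \cup \{\0\}}(\0) = V_0$ (because any $y \in \P$ whose bisector with $\0$ can cut $V_0 \subseteq B(\0, R_1)$ satisfies $|y| \leq 2R_1$), matching the star-type simplices; and (b) once $A$ further contains all Voronoi neighbors in $\Vor(\P)$ that define the dual Voronoi faces of the killed simplices---again a set in $B(\0, R')$ for some a.s.\ finite $R'$ by the circumradius bound---the dual face of any killed simplex is the same whether computed in $\Vor(\P)$ or in $\Vor(\P \cap A)$, matching the killed-type simplices. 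Potential ``spurious'' simplices $\tau \in \mcD(\P \cap A)$ whose dual Voronoi face meets $V_0$ but which are not in $\mcD(\P)$ are ruled out by repeating the empty-ball argument: such a $\tau$ would force an empty ball of $\P$ centred at $\0$ of radius exceeding $d_0$, contradicting the definition of $d_0$.

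Finally, since $A_n \to \mR^d$, there is an a.s.\ finite random $N$ with $A_n \supseteq B(\0, R')$ for every $n > N$; setting $M := \max(R, N)$, for $n > M$ the above yields $D_n \triangle \mrD_n = \{\sigma_1, \ldots, \sigma_L\}$ with $w(\sigma_i) \leq R \leq M$. The main obstacle is part (b) of the reverse inclusion, where one must verify that the dual Voronoi faces of killed simplices are preserved under restriction to $A$; the essential probabilistic input---used repeatedly---is the a.s.\ finiteness of $R_1$ and $d_0$, both standard consequences of the stationarity and intensity of $\P$.
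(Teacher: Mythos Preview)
Your overall strategy---analyse $\mcD(\P)\triangle\mcD(\P\cup\{\0\})$ first and then show it coincides with the finite-window version once $A$ is large---is sound but differs from the paper's. The paper never passes through the limiting complex: it covers $\mR^d$ by finitely many cones of half-angle $\pi/6$ with apex $\0$, takes $R$ to be the maximum over cones of the distance from $\0$ to the nearest $\P$-point in that cone, and observes that this single random variable bounds $\Vor_{(\P\cap A)\cup\{\0\}}(\0)\subset B_R(\0)$ for \emph{every} window $A\supseteq B_R(\0)$ simultaneously. The symmetric difference then sits inside $B_{3R}(\0)$ for all large $n$ at once, bypassing your two separate inclusion arguments. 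The cone bound is terser and also feeds directly into the uniform tail estimates of the companion moment proposition; your route is more explicit about \emph{why} the symmetric difference stabilises, at the cost of extra bookkeeping.

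There is, however, one concrete error in your plan. From ``$B(v,|v-x_i|)$ is an empty ball of $\P$ containing $\0$'' you cannot deduce $|v-x_i|\le d_0$: such a ball may have large radius with $\0$ near its boundary while the nearest $\P$-neighbour of $\0$ sits just outside it. The correct estimate routes through that neighbour $X_0$: since $v\in V_0\subset B(\0,R_1)$ and $x_i$ minimises distance to $v$ among $\P$-points, $|v-x_i|\le|v-X_0|\le|v|+d_0\le R_1+d_0$, so the vertices lie in $B(\0,2R_1+d_0)$ rather than $B(\0,R_1+d_0)$. Separately, your forward inclusion is too quick: knowing that one empty circumball of a killed $\sigma$ (for $\P$) lies in $A$ gives $\sigma\in\mcD(\P\cap A)$, but $\sigma\notin\mcD((\P\cap A)\cup\{\0\})$ requires that \emph{every} point of the possibly larger dual face $\cap_i\Vor_{\P\cap A}(x_i)$ lie in $V_0$. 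This is true, but needs a short additional argument (e.g.\ use convexity of that dual face and examine where it would meet $\partial V_0$).
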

The proof is a modification of arguments from \cite[Lemma 5.1]{Penrose2007laws} with more quantitative estimates provided in Proposition \ref{prop:bddmomentsA}. Similar arguments are used in the case of $d = 2$ in \cite[Section 4]{McGivney99} and \cite[Section 8]{Penrose01}.
\begin{proof}
Let $\cone_i, 1 \leq i \leq m$ be infinite open cones with apex at $\0$ and angular radius $\pi/6$ such that $\mR^d = \cup_i \cone_i$. Let $R_i = d(\0, \P \cap \cone_i)$ i.e., distance from origin to the closest point in the cone $\cone_i$. Define $R = \max R_i$. Since $\cone_i$'s have infinite volume, $\P \cap \cone_i \neq \emptyset$ a.s. and so $R_i < \infty$ a.s. for all $1 \leq i \leq m$. Thus $R < \infty$ a.s. and also adding $\0$ does not affect the Voronoi cells outside $B_{R}(\0)$ i.e., if $\Vor_{\P_n \cup \{\0\}}(X) \cap \Vor_{\P_n \cup \{\0\}}(\0) \neq \emptyset$ or $\0 \in \Vor_{\P_n}(X)$ then $|X| \leq R$. This also gives that $\Vor_{\P_n \cup \{\0\}}(X) = \Vor_{\P_n}(X)$ if $|X| \geq 2R$. So, we have that if $\sigma \in \mcD(\P \cap W_j) \bigtriangleup  \mrD(\P \cap W_j)$ then $\sigma \cap B_{2R}(\0) \neq \emptyset$. This implies that Delaunay faces outside $B_{3R}(\0)$ are unchanged by adding $\0$. 

We now consider the case $A_n = W_n, n \geq 1$. Set $M' := 3R$. Thus in this case for all $j \geq M'$
$$ \mcD(\P \cap W_j) \bigtriangleup \mrD(\P \cap W_j) \subset \{ \sigma \in D_j : \sigma \subset B_{3R}(\0) \} \cup \{ \sigma' \in \mrD_j : \sigma' \subset B_{3R}(\0) \}.$$
If $\sigma \subset B_{3R}(\0)$, then $w(\sigma) \leq 3R$ and similarly for $\sigma'$. So, we get the first claim in the Proposition when $A_n = W_n$, $n \geq 1$ and with $M = M'$. As for $L$ in this case, observe that 
$$|\mcD(\P \cap W_M)| \leq |\P \cap W_M|^{k+1} \, \, ; |\mrD(\P \cap W_M)| \leq (|\P \cap W_M| +1)^{k+1},$$
and these bounds together with a.s. finiteness of $M$ give the a.s. finiteness of $L$. 

Set $M' = 3R$. Suppose that $A_n \to \mR^d$ is an arbitrary sequence of boxes in $\mathfrak{A}$. Then there exists an a.s. finite random variable $M$ such that $\P \cap W_{M'} \subset \P \cap A_M$ as $\P \cap W_{M'}$ is a.s. finite. Thus by the above argument, we obtain that for $j > M$ 
$$ \mcD(\P \cap A_j) \bigtriangleup \mrD(\P \cap A_j) \subset \{ \sigma \in D_j : \sigma \subset B_{3R}(\0) \} \cup \{ \sigma' \in \mrD_j : \sigma' \subset B_{3R}(\0) \}.$$
The a.s. finiteness of $L$ in this case follows as in the special case of $A_n = W_n$. 
\end{proof}

As indicated after Proposition \ref{prop:const_diff}, we provide some tail estimates for the random variables therein and for arbitrary boxes $A \in \mathfrak{A}$ to verify the moment conditions for our CLT.
\begin{prop}(\cite[Section 5.1]{Penrose2007laws})
\label{prop:bddmomentsA}
Let $\P$ be a stationary Poisson point process of unit intensity and let $A \in \mathfrak{A}$ such that $\0 \in A$. Let $\mcD_A = \mcD(\P \cap A)$ and $\mrD_A = \mcD((\P \cap A) \cup \{\0\})$ be the Delaunay complexes on $\P \cap A$ and $(\P \cap A) \cup \{\0\}$ respectively. There exists a.s. finite random variables $L_A,M_A$ such that
$$ D_A \bigtriangleup \mrD_A = \{\sigma_1,\ldots,\sigma_{L_A}\}$$
and further $w(\sigma_i) \leq M_A$ for all $i \leq L_A$. Also, we have that $L_A, M_A$ have exponentially decaying tails uniformly in $A$ i.e., there exist constants $C,c > 0$ (depending on $d$) such that for all $t \geq 1$, 
$$ \sup_{A \in \mathfrak{A}, \0 \in A} \BP(M_A > t) \leq Ce^{-ct^d} \, \, ; \, \,  \sup_{A \in \mathfrak{A}, \0 \in A} \BP(L_A > t) \leq Ce^{-ct^{1/(d+1)}} .$$
\end{prop}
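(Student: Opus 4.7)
The plan is to refine the cone-based stabilization argument from the proof of Proposition~\ref{prop:const_diff} so as to produce quantitative tail estimates that are uniform over $A\in\mathfrak{A}$ with $\0\in A$; this essentially follows \cite[Section 5.1]{Penrose2007laws}.

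As in the proof of Proposition~\ref{prop:const_diff}, cover $\mR^d$ by finitely many cones $\cone_1,\ldots,\cone_m$ of angular radius $\pi/6$ with apex at $\0$, and set
\[
R_A \;:=\; \max_{1\le i\le m}\inf\{r\ge 0 : \P\cap A\cap\cone_i\cap B_r(\0)\ne\emptyset\},
\]
where by convention the inner infimum is taken to be $\diam(A)$ if the cone contains no Poisson point of $A$, so that $R_A\le\diam(A)$ always. The same Voronoi-cell argument as in Proposition~\ref{prop:const_diff} then shows that every simplex in $\mcD_A\triangle\mrD_A$ has all of its vertices in $B_{3R_A}(\0)$, which yields
\[
M_A\le 3R_A \qquad\text{and}\qquad L_A\le 2\bigl(|\P\cap A\cap B_{3R_A}(\0)|+1\bigr)^{d+1}.
\]

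To bound $\BP(R_A>t)$ uniformly in $A$, I would combine the union bound with the Poisson void probability
\[
\BP(R_A>t)\ \le\ \sum_{i=1}^m \exp\bigl(-\lambda\,|\cone_i\cap A\cap B_t(\0)|\bigr).
\]
The key geometric input is that, since $A$ is a cube containing $\0$, an elementary case analysis on the position of $\0$ in $A$ produces for every $t\ge 1$ at least one cone $\cone_i$ (pointing into the bulk of $A$) satisfying $|\cone_i\cap A\cap B_t(\0)|\ge c_d t^d$, provided $t$ is at most a constant multiple of $\diam(A)$; for larger $t$, the event $\{R_A>t\}$ is empty by the definition of $R_A$. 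This yields $\BP(M_A>t)\le Ce^{-ct^d}$ uniformly in $A$. For $L_A$, conditioning on $R_A$, the count $N_r:=|\P\cap A\cap B_{3r}(\0)|$ is stochastically dominated by a $\mathrm{Poisson}(\lambda\omega_d(3r)^d)$ variable; splitting at a crossover radius $r_t\asymp t^{1/(d(d+1))}$ and combining the Chernoff bound for Poisson tails (which gives $\BP(N_{r_t}>s)\le\exp(-s\log(s/(e\mu)))$ for $s\gg\mu$) with the preceding tail of $R_A$ then yields the claimed $\BP(L_A>t)\le Ce^{-ct^{1/(d+1)}}$.

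The principal obstacle is obtaining the uniform-in-$A$ volume lower bound $|\cone_i\cap A\cap B_t(\0)|\ge c_d t^d$. When $\0$ lies close to a face or corner of $A$, cones pointing outward have vanishing volume in $A$ and one must instead select a cone pointing into the interior. The required case analysis on the position of $\0$ relative to the faces of the cube is elementary but needs to be carried out for each relative configuration, and it parallels the argument in \cite[Section~5.1]{Penrose2007laws}.
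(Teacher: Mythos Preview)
Your treatment of $L_A$ via the Poisson-tail/splitting argument matches the paper's and is fine. The gap is in the tail bound for $R_A$.

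The union bound $\BP(R_A>t)\le\sum_{i=1}^m\exp\bigl(-\lambda\,|\cone_i\cap A\cap B_t(\0)|\bigr)$ is correct, but it requires \emph{every} summand to be small, since $R_A$ is the \emph{maximum} over cones. Your proposed resolution---that ``at least one cone pointing into the bulk of $A$'' has volume $\gtrsim t^d$---controls only one term of the sum and cannot close the argument. Concretely, when $\0$ lies near a corner of $A$, a cone $\cone_i$ pointing outward can have $|\cone_i\cap A|$ arbitrarily small; then $\P\cap A\cap\cone_i=\emptyset$ with probability close to $1$, your convention gives $R_i=\diam(A)$, and hence $\BP(R_A>t)\approx 1$ for all $t<\diam(A)$. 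No case analysis on the position of $\0$ that merely \emph{selects} a single interior cone can fix this, because the outward cones still force $R_A=\diam(A)$ in the maximum.

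The paper's remedy combines two ingredients you are missing. First, it uses \emph{nested} cones $\cone_i\subset\cone_i^+$ of angular radii $\pi/12$ and $\pi/6$: one searches for the nearest Poisson point in the larger cone $\cone_i^+$ while partitioning space by the smaller cones $\cone_i$. Second---and this is what makes the bound uniform in $A$---it caps $R_i(A)$ at $d_i(A):=\diam(\cone_i\cap A)$ rather than at $\diam(A)$. The event $\{R_i(A)\ge t\}$ then forces $d_i(A)\ge t$, so there exists $y\in\cone_i\cap A$ with $|y|\asymp t$; the ball $B_{at}(y/2)$ sits inside $\cone_i^+$ (this is where the nesting is used), its centre $y/2\in A$ by convexity, and since $A$ is a cube containing $y/2$ one has $|A\cap B_{at}(y/2)|\ge c't^d$. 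As $\{R_i(A)\ge t\}$ also entails $\P\cap\cone_i^+\cap B_t(\0)=\emptyset$, this ball is Poisson-empty, giving $\BP(R_i(A)\ge t)\le e^{-ct^d}$ for \emph{each} $i$. The per-cone cap at $d_i(A)$ renders the outward-pointing cones automatically harmless, which is exactly what a selection-of-one-cone argument cannot achieve.
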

We sketch the proof from \cite[Lemma 5.1]{Penrose2007laws} for sake of completeness. 
\begin{proof}
Let $\cone_i, 1 \leq i \leq m$ be infinite open cones with apex at $\0$ and angular radius $\pi/12$ such that $\mR^d = \cup_{i=1}^m \cone_i \cup \{\0\}$. Let $\cone_i^+$ be the open cone concentric to $\cone_i$ with apex at $\0$ and angular radius $\pi/6$. Set $d_i(A) := \diam(\cone_i \cap A)$. Define
\begin{align*}
R_i(A) & := \min \{ \min \{ |X| : X \in \P \cap \cone^+_i \cap B_{d_i(A)}(\0) \}, d_i(A) \}, 1 \leq i \leq m, \\
R(A) & := \max_{1 \leq i \leq m} R_i(A). 
\end{align*}
By geometric considerations, we see that $\Vor_{(\P \cap A) \cup \{\0\}}(\0) \cap \cone_i \subset B_{R_i(A)}(\0) \cap \cone_i$ and so $\Vor_{(\P \cap A) \cup \{\0\}}(\0) \subset B_{R(A)}(\0)$. Thus, arguing as in the proof of Proposition \ref{prop:const_diff}, we have that
\[ D_A \bigtriangleup \mrD_A \subset B_{3R_A}(\0). \]
Now, we shall derive tail bounds for $R_A$ and using that, derive suitable bounds for $M_A, L_A$. Suppose that for some $1 \leq i \leq m$, $R_i(A) \geq t$ for some $t \geq 1$. Then there is a $y \in \cone_i \cap A$ such that $|y| = t$ and also $B_{at}(y/2) \subset \cone^+_i$ for some $a > 0$. Here $a$ depends on $d$ but not on $A$. Since $R_i(A) \geq t$, it holds that $\P \cap A \cap B_{at}(y/2) = \emptyset$. As $A = W_n+x$ for some $n,x$ and $\0 \in A$, we have that $y/2 \in A$ and $|A \cap B_{at}(y/2)| \geq c'a^dt^d$ for some constant $c' > 0$ dependent on $d$ but not on $A$. Thus, we derive that for $t \geq 1$, 
$$ \BP(R_i(A) \geq t) \leq \BP(\P \cap A \cap B_{at}(y/2) = \emptyset) \leq e^{-c'a^dt^d}.$$
This yields that if $\sigma \in D_A \bigtriangleup \mrD_A$ then $w(\sigma) \leq M_A := 3R_A$ and the required bound as well. 

To bound $L_A$ observe that $L_A \leq 2(d+1) (|\P \cap B_{M_A}(\0)| + 1)^{d+1}$ as $|\P \cap B_{M_A}(\0)| + 1$ is the total number of vertices in $(\P \cap A \cap B_{M_A}(\0)) \cup \{\0\}$ and $d$ is the maximum dimension of the Delaunay complex. For $t \geq 1$, set $s = s(t)$ such that $\theta_ds^d= (\frac{t}{2(d+1)})^{1/(d+1)}-1)$ where $\theta_d$ is the volume of the unit ball. Using the concentration inequality for Poisson random variables from \cite[Lemma 1.4]{Penrose03}, we can derive that
\begin{align*}
\BP(L_A \geq t) & \leq \BP\Big(|\P \cap B_{s/2}(\0)| \geq \theta_ds^d \Big) + \BP(M_A \geq s/2) \\
& \leq \BP\Big( \Big| |\P \cap B_{s/2}(\0)| - \theta_d(s/2)^d | \geq \theta_d(1-1/2^d)s^d \Big) + Ce^{-cs^d/2^d} \\
& \leq \tilde{C}e^{-\tilde{c}s^d} + Ce^{-cs^d/2^d}.
\end{align*}
Now substituting the choice of $s$ into the above bound completes the proof of the proposition. 
\end{proof}

\subsection{Proofs of main results - Proposition \ref{prop:stab_death_times} and Theorem \ref{thm:clt_msa}.}
\label{s:proofs_main}

We now prove weak stabilization of the sum of death times in the Poisson-Delaunay complex and thereby prove also weak stabilization of the sum of birth times and lifetimes as well. We use this to prove the central limit theorem for all three quantities. The starting point is a stablization result for the sum of death times for Delaunay complexes on locally finite point sets but along increasing windows $W_n$. 
\begin{prop}
\label{prop:stabilization_D}
Let $\X$ be a locally-finite point set with points in general position and  as before, $W_n = [-\frac{n^{1/d}}{2},\frac{n^{1/d}}{2}]^d$, $n \geq 1$. Set $\X_n = \X \cap W_n$ and let $\mcD_n = \mcD(\X_n), \mrD_n = \mcD(\X_n \cup \{0\})$ and $\mcD = \mcD(\X)$ be the Delaunay complexes on $\X_n, \X_n \cup \{\0\}$ and $\X$ respectively. Assume that $\mcD_n$ stabilizes i.e., there exists a $n_0$ such that for all $n \geq n_0$, 
\begin{equation}
\label{e:stabD}
\mcD_n \triangle \mrD_n = \{\sigma_1,\ldots,\sigma_l\},
\end{equation}
where $\sigma_k \subset W_{n_o}$ for all $1 \leq j \leq l$. Then there exists a constant $\pM_k(\mcD) \in [0,\infty)$ such that $\pM_k(\mrD_n) - \pM_k(\mcD_n) \stackrel{n \to \infty}{\to} \pM_k(\mcD).$
\end{prop}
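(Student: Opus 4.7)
My plan is to decompose the add-one cost $\pM_k(\mrD_n) - \pM_k(\mcD_n)$ into a telescoping sum of elementary changes (a single simplex insertion or deletion) and show each term converges as $n \to \infty$. By assumption \eqref{e:stabD}, for $n \geq n_0$ the symmetric difference $\mcD_n \triangle \mrD_n = \{\sigma_1, \ldots, \sigma_l\}$ is a fixed finite set contained in $W_{n_0}$. Ordering as in \eqref{e:insdelDnD'n} so that deletions precede insertions, I would introduce intermediate complexes $\mcD_n = \mcD_n^0, \mcD_n^1, \ldots, \mcD_n^l = \mrD_n$ in which consecutive complexes differ by exactly one $\sigma_i$, and set $\Delta_n^i := \pM_k(\mcD_n^i) - \pM_k(\mcD_n^{i-1})$. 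The remaining task is to prove that $\Delta_n^i$ converges for each fixed $i$.

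For a fixed $i$, I would assume without loss of generality that $\mcD_n^i = \mcD_n^{i-1} \cup \{\sigma_i\}$, the deletion case being symmetric. For $n < m$, Lemma \ref{lem:chainKnm} applied with $\X = \X_n, \Y = \X_m$ supplies chain maps $\mcD_n^{i-1} \to \mcD_m^{i-1}$ and $\mcD_n^i \to \mcD_m^i$ satisfying Assumption \ref{ass:chain_map}. By Lemma \ref{l:add_simplex}(i), $\MSA(\mcD_n^{i-1})$ and $\MSA(\mcD_n^i)$ differ by at most one simplex, so at each $n$ the quantity $\Delta_n^i$ falls into one of three cases: (A) $\sigma_i$ is positive in $\mcD_n^i$ and $\Delta_n^i = 0$; (B1) $\sigma_i$ is negative with $\MSA(\mcD_n^i) = \MSA(\mcD_n^{i-1}) \cup \{\sigma_i\}$ and $\Delta_n^i = \phi(w(\sigma_i))$; or (B2) $\sigma_i$ is negative with some swap $\tau_n \in \MSA(\mcD_n^{i-1}) \setminus \MSA(\mcD_n^i)$ and $\Delta_n^i = \phi(w(\sigma_i)) - \phi(w(\tau_n))$.

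Convergence in each case follows from the monotonicity lemmas of Section \ref{s:stabchainmaps}. In Case A at some $n$, Lemma \ref{lem:neg_new}(i) applied to the chain map $\mcD_n^i \to \mcD_m^i$ (noting $\sigma_i$ belongs to both by the stable symmetric difference) forces $\sigma_i$ to be positive for all $m \geq n$, so $\Delta_m^i = 0$ eventually by Lemma \ref{l:add_simplex}(ii). Otherwise $\sigma_i$ is negative for all $n \geq n_0$; Case B1 gives a constant value, and in Case B2 Lemma \ref{lem:stab_negfac} (with $\mcK = \mcD_n^{i-1}, \mcK' = \mcD_m^{i-1}$) shows that once a swap exists at level $n$ it persists at all levels $m \geq n$, with $w(\tau_m) \leq w(\tau_n)$. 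Since the swap condition forces $w(\sigma_i) < w(\tau_n)$ (adding $\sigma_i$ must trivialize $\partial \tau_n$ in the sublevel filtration), the sequence $w(\tau_n)$ is monotone non-increasing and bounded below by $w(\sigma_i)$; strict monotonicity of $\phi$ then gives convergence of $\phi(w(\tau_n))$ as a monotone bounded sequence, hence convergence of $\Delta_n^i$. Summing the finitely many convergent $\Delta_n^i$ yields the finite limit $\pM_k(\mcD)$.

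The hard part is the deterministic scaffolding on which this case analysis rests: Delaunay complexes are not monotone under point insertion, so $\mcD_n$ and $\mcD_m$ for $n<m$ are not directly comparable and the combinatorial reasoning of the MST sketch in Section \ref{s:proof_MST} does not extend. The chain maps of Lemma \ref{lem:chainKnm}, built via intermediate Voronoi-induced covers in Section \ref{s:lem_del}, are precisely what propagate the MSA labels and swap data across the differing intermediate complexes; verifying Assumption \ref{ass:chain_map} for these maps and ensuring the hypotheses of Lemmas \ref{lem:neg_new} and \ref{lem:stab_negfac} hold uniformly in $n$ (in particular, that the swap simplex $\tau_n$ lies in $\mcD_m^{i-1}$ for $n$ sufficiently large) is the technical crux that permits the above case analysis to go through uniformly.
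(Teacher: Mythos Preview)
Your proposal is correct and follows essentially the same approach as the paper's proof: the telescoping decomposition into single-simplex changes via the intermediate complexes \eqref{e:insdelDnD'n}, the appeal to Lemma~\ref{lem:chainKnm} for chain maps between $\mcD_n^{i}$ and $\mcD_m^{i}$, and the three-case analysis (positive $\sigma_i$ via Lemma~\ref{lem:neg_new}(i) and Lemma~\ref{l:add_simplex}(ii); negative without swap; negative with swap via Lemma~\ref{lem:stab_negfac} and monotonicity of $w(\tau_n)$) are exactly the paper's Cases~(i), (ii)(a), (ii)(b). Your additional observation that $w(\tau_n) > w(\sigma_i)$ is correct but unnecessary, since monotonicity and nonnegativity already suffice for convergence.
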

\begin{proof}
We shall fix $k \in \{1,\ldots,d\}$ and drop the subscript $k$ everywhere. Let $n \geq n_0$. Firstly, let us set that $\{\sigma_1,\ldots,\sigma_{l_1}\} = \mrD_n \setminus ( \mrD_n \cap \mcD_n)$ and $\{\sigma_{l_1+1},\ldots,\sigma_{l}\} = \mcD_n \setminus ( \mrD_n \cap \mcD_n)$. Thus, as in \eqref{e:insdelDnD'n}, we have the following decomposition 
$$ \mcD_n = \mcK^0_n \supseteq \mcK^1_n \supseteq \ldots \supseteq\mcK^{l_1}_n = \mcD_n \cap \mrD_n \subseteq \mcK^{l_1+1}_n \subseteq \ldots \subseteq \mcK^{l}_n = \mrD_n,$$
where $\mcK^{j}_n \triangle \mcK^{j-1}_n = \{\sigma_j\}$. Thus, we obtain that
\begin{equation*}
  \pM(\mcD_n) - \pM( \mrD_n) = \sum_{j=1}^{l} \pM(\mcK^{j-1}_n) - \pM(\mcK^j_n)
\end{equation*}
and so convergence of $ \pM(\mcD_n) - \pM( \mrD_n))$ follows if we show that for all $1 \leq j \leq l$ that
\begin{equation}
\label{e:cvgdnj}
\lim_{n \to \infty} \pM(\mcK^j_n) - \pM(\mcK^{j-1}_n) = \pM(\mcK^j).
\end{equation}
The rest of the proof will be devoted to showing \eqref{e:cvgdnj}. Without loss of generality, we assume $\mcK^j_n = \mcK^{j-1}_n \cup \{\sigma_j\}$. If $\sigma_j$ is not a $k$-face, $\pM(\mcK^j_n) - \pM(\mcK^{j-1}_n) = 0$ for all $n$ and so we assume that $\sigma_j$ is a $k$-face. Denote the label of a simplex in $\mcK^j_n$ by $\ell^j_n$ i.e., $\ell^j_n(\sigma) := \ell_{\mcK^j_n}(\sigma)$ for $\sigma \in \mcK^j_n$. Since $\X_n \subset \X_m$ for $m \geq n$, Lemma \ref{lem:chainKnm} gives us a chain map from $\mcK^j_n$ to $\mcK^j_m$ satisfying assumptions in Assumption \ref{ass:chain_map} for all $m \geq n$. Thus, we shall use the results on stabilization under chain maps in Section \ref{s:stabchainmaps}. 
We shall now prove \eqref{e:cvgdnj} by breaking into 3 cases. 
\begin{itemize}[labelwidth=*,align=left]
\item[Case (i):] Suppose that $\ell^j_m(\sigma_j) = 1$ for some $n \geq n_0$. Thus, by Lemma \ref{lem:neg_new}(i) $\ell^j_m(\sigma_j) = 1$ for all $m \geq n$ and so we have by Lemma \ref{l:add_simplex}(2) that $\MSA(\mcK^j_m) = \MSA(\mcK^{j-1}_m)$ for all $m \geq n$. Consequently $\pM(\mcK^j_m) = \pM(\mcK^{j-1}_m)$ for all $m \geq n$. Thus \eqref{e:cvgdnj} holds with limit being $0$. 

\item[Case (ii):] Suppose that $\ell^j_n(\sigma_j) = -1$ for all $n \geq n_0$. Thus $\MSA(\mcK^j_n) \setminus \MSA(\mcK^{j-1}_n) = \{ \sigma_j \}$ (see Theorem \ref{t:birthdeathmsa} and Lemma \ref{l:add_simplex}(1)). We subdivide this into two further cases. \\
\begin{itemize}[wide,  labelindent=0pt]
\item[(a):] Suppose $\MSA(\mcK^{j-1}_n) \setminus \MSA(\mcK^j_n) = \emptyset$ for all $n \geq n_0$ i.e., no negative faces in $\mcK^{j-1}_n$ turns positive upon adding $\sigma_j$. Then we have that $\pM(\mcK^j_n) - \pM(\mcK^{j-1}_n) = \phi(w(\sigma_j))$ and so \eqref{e:cvgdnj} holds trivially with the limit being $\phi(w(\sigma_j))$. \\

\item[(b):] Suppose that $\MSA(\mcK^{j-1}_n) \setminus \MSA(\mcK^j_n) \neq \emptyset$ for some $n \geq i_0$. Let us set $\{\tau^j_n\} = \MSA(\mcK^{j-1}_n) \setminus \MSA(\mcK^j_n)$ i.e., $\ell_n^j(\tau^j_n) = 1$ and $\ell_n^{j-1}(\tau^j_n) = -1$. 

From Lemma \ref{lem:stab_negfac}, we have that for all $m \geq n$, there exists $\tau^j_m \in \mcK^{j-1}_m$ such that $\ell_m^j(\tau^j_m) = 1$ and $\ell_m^{j-1}(\tau^j_m) = -1$. Furthermore, $w(\tau^j_m)$ is decreasing in $m$. Thus $w(\tau^j_n)$ converges as $n \to \infty$ and consequently
$$ \lim_{n \to \infty} \pM(\mcK^j_n) - \pM(\mcK^{j-1}_n) = \phi(w(\sigma_j)) - \lim_{n \to \infty}\phi(w(\tau^j_n)) =: \pM(\mcK^j).$$
Hence \eqref{e:cvgdnj} holds with the limit being $\pM(\mcK^j)$ as defined above. 
\end{itemize}
\end{itemize}
\end{proof}

\begin{proof}[Proof of Proposition \ref{prop:stab_death_times}.]
We shall again fix $k \in \{1,\ldots,d\}$ and drop the subscript $k$ everywhere. Suppose $\pF(\mcD(\P \cap A_n))$ and $\pM(\mcD(\P \cap A_n))$ satisfy weak stabilization (i.e., \eqref{e:WstabBn} holds) then so does $\pB(\mcD(\P \cap A_n))$. This is because of linearity of weak stabilization and that $\pB(\mcD(\P \cap A_n)) = \pF(\mcD(\P \cap A_n)) - \pM(\mcD(\P \cap A_n))$. Thus, to conclude the proof, we show weak stabilization of $\pF(\mcD(\P \cap A_n))$ and $\pM(\mcD(\P \cap A_n))$. Weak stabilization of $\pF(\mcD(\P \cap A_n))$ follows from Proposition \ref{prop:const_diff} as $D_{\0}(F^{\phi}(\mcD_n))$ remains unchanged for $n > M$ and so the rest of the proof will be devoted to showing weak stabilization of $\pM(\mcD(\P \cap A_n))$.

Assume that we have an arbitrary sequence of sets $A_n$ as in the statement of the proposition i.e., $A_n \to \mR^d, A_n \in \mathfrak{A}$. Hence, we have that for all $n \geq 1$, there exists an a.s. finite random variable $M_n$ and also a finite $m_n \in \bN$ such that the following inclusions hold.
\begin{equation}
\label{e:inclusionBnWn}
A_n \subset W_{m_n} \, \, \mbox{and} \, \, \P \cap W_n \subset \P \cap A_m, \, \forall m \geq M_n.
\end{equation}
We can assume without loss of generality that $m_n \geq n$ and $M_n \geq n$ a.s. 

By Proposition \ref{prop:const_diff}, there exists a.s. finite random variables $N_0,L$ such that for all $n \geq N_0$, we have that a.s., 
\begin{equation}
\label{e:stabDWnDBn}
\mcD(\P \cap W_n) \triangle \mrD(\P \cap W_n) = \mcD(\P \cap A_n) \triangle\mrD(\P \cap A_n) = \{\sigma_1,\ldots,\sigma_L\} \subset W_{N_0}. 
\end{equation}
Furthermore, as in \eqref{e:insdelDnD'n}, we have the following decompositions 
$$ \mcD(\P \cap W_n) = \mcK^0_n \supseteq \mcK^1_n \supseteq \ldots \supseteq\mcK^{L_1}_n = \mcD_n \cap \mrD_n \subseteq \mcK^{L_1+1}_n \subseteq \ldots \subseteq \mcK^{l}_n = \mrD(\P \cap W_n),$$
and
$$ \mcD(\P \cap A_n) = \mcL^0_n \supseteq \mcL^1_n \supseteq \ldots \supseteq\mcL^{L_1}_n = \mcD_n \cap \mrD_n \subseteq \mcL^{L_1+1}_n \subseteq \ldots \subseteq \mcL^{l}_n = \mrD(\P \cap A_n),$$
where $\mcK^{j}_n \triangle \mcK^{j-1}_n = \mcL^{j}_n \triangle \mcL^{j-1}_n = \{\sigma_j\}$. 
So we obtain that
$$ D_{\0}( \pM(\mcD(\P \cap A_n)) = \pM(\mcD(\P \cap A_n)) - \pM( \mrD(\P \cap A_n)) = \sum_{j=1}^{L} \pM(\mcL^{j-1}_n) - \pM(\mcL^j_n)$$
From Propositions \ref{prop:stabilization_D} and \ref{prop:const_diff},  we have that
\begin{equation}
\label{e:cvgDKnj}
\lim_{n \to \infty} \pM(\mcK^j_n) - \pM(\mcK^{j-1}_n) = \pM(\mcK^j),
\end{equation}
where the limit $\pM(\mcK^j)$ is explicitly identified in Cases (i), (ii)(a) and (ii)(b) of the proof of Proposition \ref{prop:stabilization_D}.
So the a.s. convergence of $D_{\0}( \pM(\mcD(\P \cap A_n))$ follows if we show that for all $1 \leq j \leq L$ that a.s., 
\begin{equation}
\label{e:cvgDBnj}
\lim_{n \to \infty} \pM(\mcL^j_n) - \pM(\mcL^{j-1}_n) = \pM(\mcK^j)
\end{equation}
with $\mcK^j_n$ as in \eqref{e:cvgDKnj}.

The labels $\ell_n$ are with respect to $A_n$ but to distinguish labels with respect to $W_n$ , the latter are denoted by $\ell_{W_n}$. We also use $\ell_n^j, \ell_n^{j-1}$ to represent labels with respect to $\mcL^j_n, \mcL^{j-1}_n$ respectively. We shall work with realizations for which $N_0,L < \infty$ and furthermore \eqref{e:stabDWnDBn} holds. We shall again divide into cases as in the proof of Proposition \ref{prop:stabilization_D}. 

Without loss of generality assume that $\mcK_n^j = \mcK_n^{j-1} \cup \{\sigma_j\}$ and $\mcL_n^j = \mcL_n^{j-1} \cup \{\sigma_j\}$.
\begin{itemize}[labelwidth=*,align=left]
\item[Case (i):] Suppose that $\ell^j_{W_n}(\sigma_j) = 1$ for some $n \geq N_0$. From \eqref{e:inclusionBnWn}, \eqref{e:stabDWnDBn} and Lemma \ref{lem:chainKnm}, we obtain that there exists a chain map from $\mcK^j_n$ to $\mcL^j_m$ satisfying Assumption \ref{ass:chain_map} for all $m \geq M_n$. Thus $\ell^j_m(\sigma_j) = 1$ for all $m \geq M_n$. So we have that $\MSA(\mcK^j_m) = \MSA(\mcK^{j-1}_m)$ for all $m \geq M_n$ and consequently $\pM(\mcK^j_m) = \pM(\mcK^{j-1}_m)$ for all $m \geq M_n$ (see Lemmas \ref{lem:neg_new}(i) and \ref{l:add_simplex}(ii)). Hence \eqref{e:cvgDBnj} holds with limit being $0$ for both the LHS and RHS. 

\item[Case (ii):] Suppose that $\ell^j_{W_n}(\sigma_j) = -1$ for all $n \geq N_0$. Using Lemma \ref{lem:chainKnm}, \eqref{e:inclusionBnWn} and \eqref{e:stabDWnDBn}, we see that there is chain map from $\mcL^j_n$ and $\mcK^j_{m_n}$ for all $n \geq N_0$ that satisfies Assumption \ref{ass:chain_map}. Thus we have that $\ell^j_{n}(\sigma_j) = -1$ for all $n \geq M_0 := M_{N_0}$ (see Lemma \ref{lem:neg_new}(ii)). So by the stability result (Lemma \ref{l:add_simplex}(i)) ,we have that $\MSA(\mcK^j_n) \setminus \MSA(\mcK^{j-1}_n) = \MSA(\mcL^j_n) \setminus \MSA(\mcL^{j-1}_n) = \{ \sigma_j \}$ for all $n \geq M_0$. We subdivide this into two further cases. \\ 

\begin{itemize}[wide, labelindent=0pt]
\item[(a):] Suppose $\MSA(\mcL^{j-1}_n) \setminus \MSA(\mcL^j_n) = \emptyset$ for all $n \geq M_0$ i.e., no negative faces in $\mcL^{j-1}_n$ turns positive upon adding $\sigma_j$. Then we have that $\pM(\mcL^j_n) - \pM(\mcL^{j-1}_n) = \phi(w(\sigma_j))$ and so \eqref{e:cvgDBnj} holds trivially with the limit being $\phi(w(\sigma_j))$ for both the LHS and RHS. \\

\item[(b):] Suppose that $\MSA(\mcL^{j-1}_n) \setminus \MSA(\mcL^j_n) \neq \emptyset$ for some $n \geq M_0$. Let us set $\{\tau^j_n\} = \MSA(\mcL^{j-1}_n) \setminus \MSA(\mcL^j_n)$ i.e., $\ell_m^j(\tau^j_n) = 1$ and $\ell_n^{j-1}(\tau^j_n) = -1$. 

From \eqref{e:inclusionBnWn}, \eqref{e:stabDWnDBn} and Lemma \ref{lem:chainKnm}, we have that there exists a chain map between $\mcL^j_n$ and $\mcK^j_{m_n}$. Lemma \ref{lem:stab_negfac} now gives that for all $m \geq m_n$, there exists $\pi^j_m \in \mcK^{j-1}_m$ such that $\ell_{W_m}^j(\pi^j_m) = 1$, $\ell_{W_m}^{j-1}(\pi^j_m) = -1$ and $w(\pi^j_m) \leq w(\tau^j_n)$. Thus using that $\phi$ is increasing 
$$ \liminf_{n \to \infty} \phi(w(\tau^j_n)) \geq \liminf_{m \to \infty}\phi(w(\pi^j_n)) = \phi(w(\sigma_j)) -\pM(\mcK^j),$$
where the limit follows from Case(ii)(b) in the proof of Proposition \ref{prop:stabilization_D}. Now again using \eqref{e:inclusionBnWn}, \eqref{e:stabDWnDBn}, Lemmas \ref{lem:chainKnm} and \ref{lem:stab_negfac}, we can deduce that $w(\pi^j_n) \geq w(\tau^j_m)$ for all $m \geq M_n$ and $n \geq N_0$. Thus, 
$$ \limsup_{n \to \infty} \phi(w(\tau^j_n)) \leq \limsup_{m \to \infty}\phi(w(\pi^j_m)) = \phi(w(\sigma_j)) -\pM(\mcK^j).$$
This yields that $ \lim_{n \to \infty} \phi(w(\tau^j_n)) = \phi(w(\sigma_j)) -\pM_k(\mcK^j)$ and hence \eqref{e:cvgDBnj} holds always. 
\end{itemize}
\end{itemize}
\end{proof}

\begin{proof}[Proof of Theorem \ref{thm:clt_msa}]
The proof proceds by verifying the conditions of Theorem \ref{thm:clt_Duy17}. The difficult condition to verify is weak stabilization. Weak stabilization has already been shown to hold in Proposition \ref{prop:stab_death_times}. Since translation invariance holds trivially, we only have to show the fourth moment bound.

Now, all that remains is to verify the Poisson bounded moments condition. Let $0 \in A$ for some $A \in \mathfrak{A}$. From Proposition \ref{prop:bddmomentsA}, we have that there exist random variables $L_A, M_A$ such that
\begin{equation}
\label{e:diff_del_A}
\mcD(\P \cap A \cup \{0\}) \triangle \mcD(\P \cap A) = \{\sigma_1, \ldots, \sigma_{L_A} \} 
\end{equation}
and $w(\sigma_i) \leq M_A$ for all $1 \leq i \leq L_A$. Then by H\"{o}lder's inequality, growth condition on $\phi$ and Theorem \ref{t:stabSTY17}, we obtain
$$ |D_{\0}(\pM(\mcD(\P \cap A)))|^4 \leq 16 L_A M_A^{4p}.$$
The Poisson bounded moment condition now follows by applying Cauchy-Schwarz inequality and the tail bounds for $L_A, M_A$ in Proposition \ref{prop:bddmomentsA}. 

It remains to show that the limiting variances $\sigma^2(\pMi_k,\varphi), \sigma^2(B_k,\varphi)$ and $\sigma^2(L_k,\varphi)$ are non-zero. This is shown in upcoming Proposition \ref{prop:varlb} by proving that these are non-zero random variables. 
\end{proof}

\subsection{Proof of variance lower bound}
\label{s:varlb}

The key to our variance lower bound proof is the existence of a particular robust configuration of points that gives non-trivial lower bounds on the add-one cost for the total weight of minimal spanning acycles.
\begin{lemma}
\label{lem:config}
Let 
$\phi : \mR_+ \to \mR_+$ be a strictly increasing function. For any $r > 0$, there exists $\varepsilon > 0$ (depending on $d$), a finite set of points $\cpts := \cpts(r) = \{x_1,\ldots,x_{2d+2}\} \subset B_{r}(\0)$ and disjoint sets $A(x_i) \subset B_{\epsilon}(x_i)$ with  $|A(x_i)| \geq c\theta_d\epsilon^d$ for some $c > 0$ such that  for any finite point-set $\X$ with $\X \cap B_{3r}(\0) = \{y_1,\ldots,y_{2d+2}\}$ and $y_i\in A(x_i)$ for all $1 \leq i \leq m$,  the following inequalities hold:
\begin{equation}
\label{e:configbounds}
0<\phi(r/4)< D_{\0}(\pM_k(\X)) = \pM_k(\X \cup \{\0\}) - \pM_k(\X)  \quad  \mbox{for}\;\;1\leq k \leq d, \end{equation}
\begin{equation}
\label{e:configbounds_birth}
0<\phi(r/4)< D_{\0}(\pB_k(\X)) = \pB_k(\X \cup \{\0\}) - \pB_k(\X) \quad \mbox{for}\; 1\leq k < d.
\end{equation}
\end{lemma}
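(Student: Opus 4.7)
The plan is to exhibit an explicit rigid configuration $\cpts=\{x_1,\ldots,x_{2d+2}\}\subset B_r(\0)\setminus B_{r/2-\delta}(\0)$ (for a small $\delta$) such that the combinatorial type of $\mcD(\cpts)$ and of $\mcD(\cpts\cup\{\0\})$ is stable under small perturbations of the $x_i$, and such that adding $\0$ causes, for each $k$, the insertion of a specific negative $k$-face of weight at least $r/2-\delta>r/4$ with either no companion simplex exchanged out of $\MSA$ or only one of significantly smaller $\phi$-weight. Continuity of circumradii and of sign labels then supplies the sets $A(x_i)\subset B_\varepsilon(x_i)$ of positive Lebesgue measure; the hypothesis $\X\cap B_{3r}(\0)=\{y_1,\ldots,y_{2d+2}\}$ localizes the entire analysis inside $B_{2r}(\0)$ because every Delaunay neighbour of $\0$ lies in $B_{r+\delta}(\0)$.

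\emph{Construction and homological analysis.} Place $x_1,\ldots,x_{d+1}$ as the vertices of a regular $d$-simplex of circumradius $\delta$ around $p:=(r/2,0,\ldots,0)$, and set $x_{d+1+i}:=-x_i$ for $1\leq i\leq d+1$. This yields two tight ``clusters'' symmetric about $\0$. For $\delta$ sufficiently small, $\mcD(\cpts)$ consists of two full cluster $d$-simplices plus bridging Delaunay faces whose circumradii all exceed $r/2$; in $\mcD(\cpts\cup\{\0\})$ every simplex containing $\0$ has circumradius $\geq r/2-\delta$. For each $1\leq k\leq d$, the star of $\0$ contains $k$-dimensional simplices, and by a direct Betti-number calculation on the filtered Delaunay complex, at least one such $k$-face is negative, killing a $(k-1)$-cycle that first appears at filtration value $\approx r/2-\delta$. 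By Theorem \ref{t:birthdeathmsa} it lies in $\MSA$; the symmetry of $\cpts$ together with the label bookkeeping of Lemma \ref{l:add_simplex} rules out a simultaneous equally-weighted companion being exchanged out. The same configuration also produces new positive $k$-faces of weight $\geq r/2-\delta$ contributing to $\pB_k$ for $1\leq k<d$, giving the second inequality.

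\emph{Robustness and main obstacle.} Delaunay complexes, boundary maps, and sign labels are locally constant in the vertex coordinates on the open complement of the degenerate (cocircular) locus, so there exists $\varepsilon>0$ such that replacing each $x_i$ by any $y_i\in B_\varepsilon(x_i)$ preserves the combinatorial type of both $\mcD(\cpts)$ and $\mcD(\cpts\cup\{\0\})$ and preserves the strict weight inequality $w(\sigma)>r/4$ for the relevant new face. Take $A(x_i)\subset B_\varepsilon(x_i)$ to be pairwise disjoint open balls of radius $\varepsilon/2$ (shrinking $\varepsilon$ further so that $\varepsilon<\tfrac{1}{4}\min_{i\neq j}|x_i-x_j|$), giving $|A(x_i)|\geq c\theta_d\varepsilon^d$ with $c=2^{-d}$. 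The main obstacle I anticipate is the MSA bookkeeping: the case split from the proof of Proposition \ref{prop:stabilization_D} (Cases (i), (ii)(a), (ii)(b)) must be carried out for every simplex in $\mcD(\cpts)\triangle\mcD(\cpts\cup\{\0\})$, and one must verify that the new negative $k$-face identified falls in the favourable Case (ii)(a) (no companion subtracted) or in a version of Case (ii)(b) where the subtracted companion has $\phi$-weight smaller than $\phi(w(\sigma_{\mathrm{new}}))-\phi(r/4)$. This is where the symmetric two-cluster configuration is essential: the symmetry forces any companion, if present, to be a bridging $k$-face whose weight can be independently bounded using the geometry of $\cpts$, and this is the step on which the whole variance lower bound hinges.
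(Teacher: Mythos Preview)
Your configuration is genuinely different from the paper's, but the proposal leaves the central computation undone, and the configuration you chose makes that computation harder, not easier.

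The paper uses a single regular $d$-simplex $\{p_1,\dots,p_{d+1}\}$ inscribed in a sphere together with one ``outer'' point $q_i$ per facet, placed far out along the ray through the facet's center. The outer points pin down the Delaunay structure so that adding $\0$ has a completely explicit effect (Lemma~\ref{lem:coarsebound}): the only simplex \emph{removed} is the single $d$-simplex $[p_1,\dots,p_{d+1}]$, while the simplices \emph{added} are exactly those of the form $[\0,p_{i_1},\dots,p_{i_k}]$. In particular, for $k<d$ the symmetric difference consists of additions only, so $\pM_k$ and $\pB_k$ can only increase, and since at least one negative and one positive $k$-face of weight $\geq r/2$ is added, the bound is immediate. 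For $k=d$ one compares the $d+1$ added $d$-simplices (weight $dr/2$) against the single removed one (weight $r$). A separate argument with cones instead of balls is needed for $k=d=2$, where $dr/2=r$ and the naive perturbation fails.

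Your two-cluster configuration does not have this monotonicity: inserting $\0$ between the clusters will delete bridging Delaunay simplices in \emph{every} dimension $1\leq k\leq d$, not just in dimension $d$. So the argument ``only additions, hence strict increase'' is unavailable, and you are forced into the full MSA exchange bookkeeping that you correctly identify as the main obstacle --- but then do not carry out. The sentence ``by a direct Betti-number calculation \dots\ at least one such $k$-face is negative'' is a placeholder, and the claim that ``symmetry together with Lemma~\ref{l:add_simplex} rules out a simultaneous equally-weighted companion'' is not justified: Lemma~\ref{l:add_simplex} only says the companion set has size at most one, it does not control its weight. Worse, exact antipodal symmetry puts the configuration on the cocircular/degenerate locus, so after the perturbation that you must perform anyway the symmetry is gone and there is no mechanism left to bound the companion's $\phi$-weight from above. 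The paper's configuration sidesteps all of this precisely because for $k<d$ there is no companion to bound.
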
%
\begin{remark}
\label{r:config}
We note that the proof of the above lemma also implies the result for  $ D_{\0}(\pL_k)$ for $1\leq k<d$. An upper bound of $2^{d+1}\phi(dr)$ also holds for the above the add-one costs - $ D_{\0}(\pM_k(\X)), D_{\0}(\pB_k(\X)),  D_{\0}(\pL_k)$ - and follow easily from our proofs. See Lemma \ref{lem:coarsebound} and proof of Lemma \ref{lem:config}.
\end{remark}
Assuming the above lemma, we prove the required condition for variance lower bound and then work towards the proof of the lemma. Recall $D_{\infty}(\pM_k)$, $D_{\infty}(\pB_k)$, and $D_{\infty}(\pL_k)$ as defined in Proposition \ref{prop:stab_death_times}.  
\begin{prop}
\label{prop:varlb}
Let assumptions be as in Theorem \ref{thm:clt_msa}. The random variables $D_{\infty}(\pM_k)$, $D_{\infty}(\pB_k)$ and $D_{\infty}(\pL_k)$ are non-zero.
\end{prop}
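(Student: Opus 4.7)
The plan is to exhibit, for each of the three functionals, a single positive-probability event on which the a.s.\ limit $D_\infty$ is bounded away from zero; since the limit exists a.s.\ by Proposition~\ref{prop:stab_death_times}, this is enough.

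Fix $r>0$ and let $\cpts(r)=\{x_1,\ldots,x_{2d+2}\}\subset B_r(\0)$ together with the disjoint Borel sets $A(x_i)\subset B_\varepsilon(x_i)$ provided by Lemma~\ref{lem:config}. Define the event
\[
 E_r \;:=\; \Bigl\{|\P\cap A(x_i)|=1\ \text{for all } 1\le i\le 2d+2\Bigr\}\ \cap\ \Bigl\{\P\cap \bigl(B_{3r}(\0)\setminus\bigcup_i A(x_i)\bigr)=\emptyset\Bigr\}.
\]
By complete independence of the Poisson process and the bound $|A(x_i)|\ge c\theta_d\varepsilon^d>0$,
\[
 \BP(E_r)\;=\;\Bigl(\prod_{i=1}^{2d+2}\lambda|A(x_i)|\Bigr)\,\exp\!\Bigl(-\lambda|B_{3r}(\0)|\Bigr)\;>\;0.
\]

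On $E_r$, for every $n$ large enough that $W_n\supset B_{3r}(\0)$ the finite point set $\X:=\P\cap W_n$ satisfies the hypothesis of Lemma~\ref{lem:config}, namely $\X\cap B_{3r}(\0)=\{y_1,\ldots,y_{2d+2}\}$ with each $y_i\in A(x_i)$ (the remaining points of $\X$ lying outside $B_{3r}(\0)$ are unconstrained). Consequently, by \eqref{e:configbounds},
\[
 D_{\0}\pM_k(\P\cap W_n)\;\ge\;\phi(r/4)\;>\;0\qquad\text{for all such }n,
\]
and analogously by \eqref{e:configbounds_birth} (respectively Remark~\ref{r:config}),
\[
 D_{\0}\pB_k(\P\cap W_n)\;\ge\;\phi(r/4),\qquad D_{\0}\pL_k(\P\cap W_n)\;\ne\;0
\]
for $1\le k\le d-1$ and all sufficiently large $n$ on $E_r$.

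By Proposition~\ref{prop:stab_death_times} applied to the canonical sequence $A_n=W_n\in\mathfrak{A}$, the three add-one costs converge a.s.\ to $D_\infty(\pM_k)$, $D_\infty(\pB_k)$ and $D_\infty(\pL_k)$ respectively. Passing the inequalities above to the a.s.\ limit gives, on $E_r$,
\[
 D_\infty(\pM_k)\;\ge\;\phi(r/4)>0,\qquad D_\infty(\pB_k)\;\ge\;\phi(r/4)>0,\qquad D_\infty(\pL_k)\;\ne\;0.
\]
Combined with $\BP(E_r)>0$, this yields $\BP(D_\infty(\pM_k)\ne 0)>0$, and the same for $\pB_k$ and $\pL_k$, which is the desired non-degeneracy. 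The last statement of Theorem~\ref{thm:clt_Duy17} then promotes this to strict positivity of the limiting variances, completing the picture.

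The only real content is Lemma~\ref{lem:config}: once the robust configuration is in hand, the argument is a one-line application of weak stabilization. The main obstacle—producing a point configuration whose add-one cost is provably bounded away from zero in every one of $\pM_k$, $\pB_k$, $\pL_k$ simultaneously, robustly under perturbations within the $A(x_i)$'s—is thus quarantined in the proof of that lemma rather than here.
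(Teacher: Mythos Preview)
Your proof is correct and follows essentially the same route as the paper: invoke Lemma~\ref{lem:config} to produce a positive-probability Poisson configuration on which the add-one cost is uniformly bounded below for all large $n$, then pass to the a.s.\ limit from Proposition~\ref{prop:stab_death_times}. Your framing via a single event $E_r$ on the full process $\P$ is in fact slightly cleaner than the paper's version, which goes through $\lim_n \BP(D_{\0}\pM(\P\cap W_n)>0)$.

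One small sloppiness: for $\pL_k$ you write $D_{\0}\pL_k(\P\cap W_n)\ne 0$ and then conclude $D_\infty(\pL_k)\ne 0$ ``passing to the limit''. A sequence of nonzero reals can of course converge to zero, so ``$\ne 0$'' does not survive the limit. What Remark~\ref{r:config} actually provides is the same quantitative lower bound $\phi(r/4)$ for $D_{\0}\pL_k$, and you should cite that bound explicitly (as you did for $\pM_k$ and $\pB_k$) so that the inequality passes to the limit cleanly.
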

\begin{proof}
We shall use Proposition \ref{prop:stab_death_times} and Lemma \ref{lem:config} to prove that $D_{\infty}(\pM_k)$ is 
non-zero. The same proof may be followed to show that $D_{\infty}(\pB_k)$ and $D_{\infty}(\pL_k)$ are non-zero; see also Remark \ref{r:config}.  So, we shall skip the details for the proofs for $D_{\infty}(\pB_k)$ and $D_{\infty}(\pL_k)$. We shall again drop the subscript $k$.

From Proposition \ref{prop:stab_death_times}, we have that $D_{\0}\pM(\P \cap W_n) \to D_{\infty}(\pM)$ a.s. as $n \to \infty$. Thus, we have that 
\begin{equation}
\label{e:ltDinftyDwn}
\BP(D_{\infty}(\pM) > 0) = \lim_{n \to \infty} \BP(D_{\0}\pM(\P \cap W_n) > 0).
\end{equation}
Since $\phi$ is strictly increasing and non-negative, we have that $\phi(r/4) > 0$ for $r >  0$. Let $n$ be such that $n > 4r$. We shall show that there exists $C_0 > 0$ (depending on $r,\varepsilon,d$) such that for all $n > 4r$
\begin{equation}
\label{e:poissonconfiglb}
\BP(D_{\0}\pM(\P \cap W_n) > 0)  \geq   \BP(D_{\0}\pM(\P \cap W_n) \geq \phi(r/4)) > C_0.
\end{equation}
Thus, we obtain that $D_{\infty}(\pM)$ is a non-zero random variable. Hence, to complete the proof it suffices to show \eqref{e:poissonconfiglb}.

Set $\P_n := \P \cap W_n$. From Lemma \ref{lem:config} and using that $n > 4r$  we have that
\begin{align*}
&  \BP(D_{\0}\pM(\P \cap W_n) \geq \phi(r/4))    \\
& \geq \BP( |\P_n \cap B_{3r}(\0)| = 2d+2,  |\P_n \cap A(x_i)| = 1 \, \forall 1 \leq i \leq 2d+2) \\
& \geq \BP( |\P \cap B_{3r}(\0)| =2d+2,  |\P \cap A(x_i)| = 1 \, \forall 1 \leq i \leq 2d+2) > 0,
\end{align*}
where the last probability is non-zero can be shown using that $|A(x_i)| \geq c\theta_d\epsilon^d, i=1,\ldots,2d+2$ (i.e., $A(x_i)$'s have non-trivial measure) and the two properties of the Poisson process (see Section~\ref{sec:weak_stab}). 
Thus, we have proved \eqref{e:poissonconfiglb} and that $D_{\infty}(\pM)$ is a non-zero random variable as required.
\end{proof}

Now we shall we focus on the proof of Lemma \ref{lem:config}. We first introduce the configuration then prove the required properties. Without loss of generality, we set $r=1$ and $d\geq 2$. The case of MSAs on 1-dimensional spaces is straightforward. Consider $(d+1)$ points which lie the unit $d$-sphere and form a regular $d$-simplex, i.e. all points are mutually equidistant. These form a $d$-simplex which we denote $[p_1,\ldots, p_{d+1}]$. For each $(d-1)$-face, we add a point on the ray from the origin to the center of the face but lying on the hypersphere of radius $\rho$ where $\rho > 20d$. Note that by the center of the face, we are referring to the center of the minimum enclosing ball of the vertices of the face. We denote these points $\{q_1,\ldots, q_{d+1}\}$ such that $q_i$ corresponds to the $(d-1)$-face with $p_i$ removed. 
The configuration for $d=2$ is shown in Figure~\ref{fig:config}. 
\begin{figure}
\centering\includegraphics[width=0.5\textwidth]{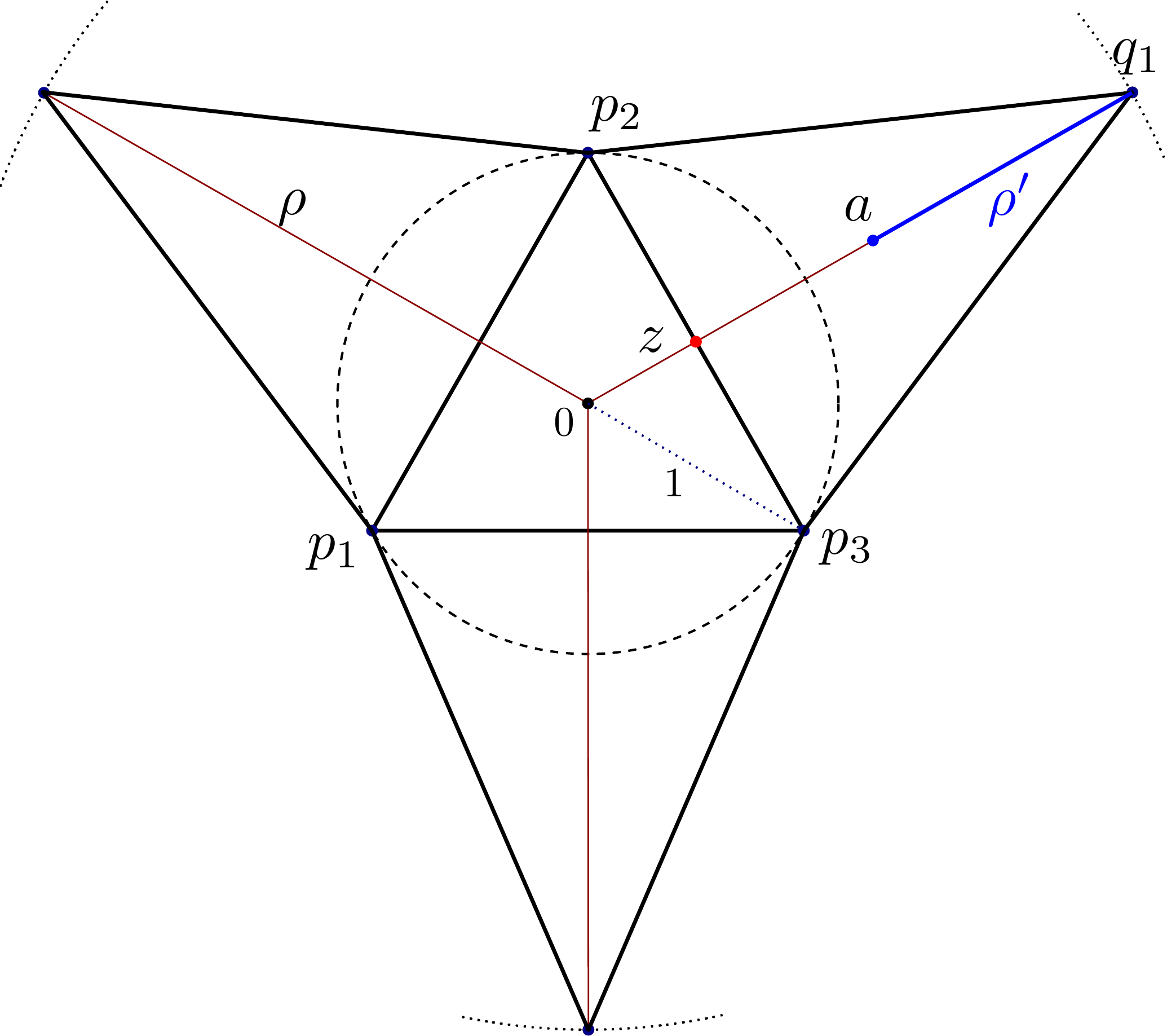}
\caption{\label{fig:config} Configuration of points $\cpts(r)$ in $\mathbb{R}^2$.}
\end{figure}
Setting $\cpts := \cpts(1) = \{p_1,\ldots,p_{d+1}, q_1,\ldots, q_{d+1}\}$, let $\mcD(\cpts)$ denote the Delaunay complex built on $\cpts$ and 
$\mrD(\cpts)$ denote the Delauanay complex built on $\cpts\cup\{\0\}$.
\begin{lemma}\label{lem:geometric1}
Let $\cpts := \cpts(1)$ as above and in $\mcD(\cpts)$, consider the $d$-simplex $\{q_1,\allowbreak p_2,\allowbreak \ldots,\allowbreak p_{d+1}\}$. Let $a$ and $\rho'$ denote the center and radius of its corresponding circumsphere, then 
$$||a|| - \rho' > \frac{1}{10d}.$$ 
The above bound also holds for all $d$-simplices $\{p_1,\ldots,p_{i-1},q_i,p_{i+1},\ldots,p_d\}, i = 1,\ldots,d.$
\end{lemma}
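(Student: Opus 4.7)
The plan is to exploit the rotational symmetry that fixes the face $\{p_2,\ldots,p_{d+1}\}$ in order to reduce the computation to a one-dimensional problem along the line through the origin and $q_1$, and then to evaluate everything explicitly using the fact that $\|p_i\| = 1$ and the centroid of $p_1,\ldots,p_{d+1}$ is the origin.

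First I would observe that any point equidistant from $p_2,\ldots,p_{d+1}$ must lie on the affine axis of symmetry of this regular $(d-1)$-simplex, namely the line through the origin (which is equidistant from all of them, being the common centre of the unit sphere) and the centroid $c_1 = \frac{1}{d}\sum_{i=2}^{d+1} p_i$ of the opposite face. Since $\sum_{i=1}^{d+1} p_i = \0$, one gets $c_1 = -p_1/d$, so $h := \|c_1\| = 1/d$. By construction the point $q_1$ lies on this same axis at distance $\rho$ from the origin, so the circumsphere centre $a$ of $\{q_1,p_2,\ldots,p_{d+1}\}$ lies on this axis too. Writing $a = tv$ where $v = c_1/\|c_1\|$ and $q_1 = \rho v$, I would equate $\|a-q_1\|^2$ with $\|a-p_i\|^2 = (t-h)^2 + (1-h^2)$ (the latter using Pythagoras since $\|p_i - c_1\|^2 = 1 - h^2$), which yields
\[
t \;=\; \frac{\rho^2-1}{2(\rho-h)}, \qquad \rho' \;=\; \rho - t \;=\; \frac{(\rho-h)^2 + 1 - h^2}{2(\rho-h)}.
\]

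A short algebraic simplification then gives the clean identity
\[
\|a\| - \rho' \;=\; 2t - \rho \;=\; \frac{\rho h - 1}{\rho - h} \;=\; \frac{\rho - d}{d\rho - 1},
\]
using $h = 1/d$. The final step is to check that $\rho > 20d$ forces this quantity to exceed $1/(10d)$. Rearranging, the inequality is equivalent to $9d\rho > 10d^2 - 1$, which holds comfortably since $\rho > 20d$ gives $9d\rho > 180 d^2$. The remaining $d$-simplices $\{p_1,\ldots,p_{i-1},q_i,p_{i+1},\ldots,p_{d+1}\}$ are handled by the symmetry of the configuration: the isometry permuting the indices of the regular simplex maps $\{q_1,p_2,\ldots,p_{d+1}\}$ to $\{p_1,\ldots,q_i,\ldots,p_{d+1}\}$, preserves the origin, and therefore preserves both $\|a\|$ and $\rho'$.

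No single step is a real obstacle; the one that needs care is justifying that $a$ lies on the axis through the origin and $c_1$ (so that the whole computation is effectively one-dimensional), and then evaluating $h = 1/d$ cleanly from $\sum p_i = \0$. Everything else is bookkeeping and the elementary inequality $\rho > 20d$.
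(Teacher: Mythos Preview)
Your proof is correct and follows essentially the same approach as the paper: both reduce to the symmetry axis through the origin and the face-centroid $z=c_1$ (with $\|z\|=1/d$), compute $\|a\|-\rho'$ explicitly, and then invoke $\rho>20d$. The only cosmetic difference is that you solve directly for the coordinate $t$ of $a$ along the axis to obtain the closed form $(\rho-d)/(d\rho-1)$, whereas the paper reaches the equivalent expression $1/d-(d^2-1)/(d^2\|\overline{zq_1}\|)$ via the angle at $q_1$ and a cosine relation; the two formulas agree after substituting $\|\overline{zq_1}\|=\rho-1/d$.
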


\begin{lemma}\label{lem:coarsebound}
Let $\cpts := \cpts(1)$ be as above and $k \leq d$. Then the following holds. (1) $\mrD(\cpts) \setminus \mcD(\cpts)$ consists of at most $2^{d+1}$ simplices whose maximum weight is $\frac{d}{2}$ and minimum weight is $1/2$.  (2) $\mcD(\cpts) \setminus \mrD(\cpts)$ consists of a single $d$-simplex of weight $1$.
\end{lemma}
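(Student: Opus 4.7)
The plan is to exploit the symmetry of the configuration $\cpts$: the points $\{p_1,\ldots,p_{d+1}\}$ form a regular $d$-simplex with $\0$ as both centroid and circumcenter, while the $q_j$'s sit far away at radial distance $\rho > 20d$ along the antipodal rays. Thus adding $\0$ can only modify the Delaunay structure locally near $\0$, where the $q_j$'s are geometrically irrelevant.

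For Part (2), I would show that the only simplex of $\mcD(\cpts)$ destroyed by inserting $\0$ is $[p_1,\ldots,p_{d+1}]$. Its circumsphere is centered at $\0$ with radius $1$, so it contains no $q_j$ (each at distance $>20d$) and is a valid Delaunay $d$-simplex of weight $1$ in $\mcD(\cpts)$. Inserting $\0$ places a point strictly inside this circumsphere, making the simplex non-Delaunay. For every other $d$-simplex of $\mcD(\cpts)$, which by construction has the form $[p_1,\ldots,\hat p_i,\ldots,p_{d+1},q_i]$, Lemma \ref{lem:geometric1} guarantees its circumsphere avoids $\0$ (center $a_i$ with $\|a_i\|-\rho'_i > 1/(10d)$), so it persists into $\mrD(\cpts)$. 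Every lower-dimensional simplex of $\mcD(\cpts)$ is a face of one of these surviving $d$-simplices, hence also persists; so $[p_1,\ldots,p_{d+1}]$ is the unique removed simplex.

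For Part (1), the new simplices of $\mrD(\cpts)$ are precisely those containing $\0$. The main geometric step is to identify the Voronoi cell of $\0$ in $\mrD(\cpts)$ as the bounded polytope $P := \{x : x\cdot p_i \leq 1/2 \text{ for all } i\}$: boundedness follows because $\{-p_i\}$ positively spans $\mR^d$, and the diameter of $P$ is $O(d)$ whereas the bisector of $\0$ and any $q_j$ lies at Euclidean distance $\gtrsim \rho/2 > 10d$ from the origin and hence never cuts into $P$. Consequently the new simplices are exactly $\{\0\}\cup S$ for $S\subsetneq\{p_1,\ldots,p_{d+1}\}$ (the case $S=\{p_1,\ldots,p_{d+1}\}$ is excluded as being $(d+1)$-dimensional), giving $2^{d+1}-1 \leq 2^{d+1}$ simplices. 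To compute weights, rotational symmetry forces the circumcenter of $\{\0\}\cup S$ (for $|S|=k\geq 1$) to be of the form $c_S = \alpha_k \sum_{p\in S} p$; solving $c_S\cdot p = 1/2$ for $p\in S$ gives $\alpha_k = d/(2(d-k+1))$, and a direct check shows $c_S$ lies in the required Voronoi intersection (in particular $c_S\cdot p_j < 1/2$ for $p_j\notin S$), so the simplex weight equals the circumradius $|c_S| = \tfrac{1}{2}\sqrt{dk/(d-k+1)}$. This is monotone increasing in $k$, yielding minimum $1/2$ at $k=1$ (the edges $[\0,p_i]$) and maximum $d/2$ at $k=d$ (the $d+1$ subdividing simplices $[\0,p_1,\ldots,\hat p_i,\ldots,p_{d+1}]$).

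The main obstacle is the boundedness-and-isolation argument for the Voronoi cell of $\0$: one must rule out any contribution from the far-away $q_j$'s and any "leakage" in which $\0$'s Voronoi cell could meet a $q_j$-cell, producing spurious added simplices. Once this containment is established, the rest of the argument is essentially symmetric bookkeeping together with the single circumcenter computation and a direct appeal to Lemma \ref{lem:geometric1}.
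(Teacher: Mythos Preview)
Your proposal is correct and follows essentially the same route as the paper: identify the $d$-simplices of $\mcD(\cpts)$ and $\mrD(\cpts)$ via the empty-circumsphere/Voronoi criterion, use Lemma~\ref{lem:geometric1} to show the $q_i$-containing simplices survive, conclude that only $[p_1,\ldots,p_{d+1}]$ is removed and only simplices of the form $[\0,p_{i_1},\ldots,p_{i_k}]$ are added, and then compute the extremal weights. The one noteworthy difference is in the weight computation: the paper argues that the $d$-simplices carry the maximal weight by invoking the filtration property (faces precede cofaces) and then computes $w([\0,p_2,\ldots,p_{d+1}])=d/2$ by a separate trigonometric calculation in the appendix, whereas you derive the closed form $w(\{\0\}\cup S)=\tfrac{1}{2}\sqrt{dk/(d-k+1)}$ for all $k=|S|$ directly from the circumcenter $c_S=\tfrac{d}{2(d-k+1)}\sum_{p\in S}p$ and read off both extrema and the monotonicity at once. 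Your Voronoi-cell containment argument for $\0$ (bounding $\mathrm{diam}(P)\leq d$ against the $q_j$-bisector distance $\rho/2>10d$) is also more explicit than the paper's treatment, which simply asserts the list of $d$-simplices as a consequence of empty circumspheres.
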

\begin{proof}
Lemma ~\ref{lem:geometric1} implies that the $d$-simplices in $\mcD(\cpts)$ are 
\[ [p_1,\ldots,p_{d+1}] \cup \{ [p_1,\ldots,p_{i-1},q_i,p_{i+1},\ldots,p_{d+1}] : i = 1,\ldots,d+1 \} \]
and the $d$-simplices in $\mrD(\cpts)$ are
\[ \{ [\0,p_1,\ldots,p_{i-1},p_{i+1},\ldots,p_{d+1}], [p_1,\ldots,p_{i-1},q_i,p_{i+1},\ldots,p_{d+1}] : i = 1,\ldots,d+1 \}. \]
Note that the two complexes are nothing but the above $d$-simplices and all the lower dimensional ones contained in these $d$-simplices. These are consequences of the fact that the circumspheres of the $d$-simplices are empty in $\cpts$ or $\cpts \cup \{0\}$ as required and hence the $d$-simplices are in $\mcD(\cpts)$ or $\mrD(\cpts)$ as needed.

Thus (2) now follows easily as the only $d$-simplex in $\mrD(\cpts) \setminus \mcD(\cpts)$ is $[p_1,\ldots, p_{d+1}]$ which is weight $1$ by construction.

In $\mrD$, the simplex $[p_1,\ldots, p_{d+1}] \in \mcD$ is replaced by the simplices
\[ \{ [\0,p_{i_1},\ldots,p_{i_k}] : 1 \leq k \leq d,  \{i_1,\ldots,i_k\} \subset \{1,\ldots,d+1\} \}. \]
This gives the upper bound of at most $2^{d+1}$ simplices. The minimum weight is achieved by a $1$-simplex. For example, we may take $[\0,p_i]$ for any $1 \leq i \leq d+1$ and check that $w([\0,p_i]) = 1/2$.

The maximum weight is achieved by any of the $d$-simplices. For example, consider $[\0,p_2,\ldots,p_{d+1}]$.  While all the simplices are contained in the unit sphere, the weight for a simplex in the Delaunay complex may be larger. To bound the new weight we must find where $\0$ and $p_i$ are all equidistant. Since the weight function is known to induce a filtration, the $d$-simplices must have the largest weight. We can directly compute the radius as $\frac{dr}{2}$ using straightforward geometry (see Appendix~\ref{app:weight}).
\end{proof}

\begin{proof}[Proof of Lemma~\ref{lem:config}]
Let $\cpts = \cpts(r) := r\cpts(1)$, where $\cpts(1)$ is  as above. With an abuse of notation, we shall denote the weights on $\mcD(\cpts(r))$ and $\mcD(\cpts(1))$ by $w$. Observe that $[v_0,\ldots,v_k] \in \mcD_{\cpts(1)}$ iff $[rv_0,\ldots,rv_k] \in \mcD_{\cpts(r)}$ and thus trivially
$$w([rv_0,\ldots,rv_k]) = rw([v_0,\ldots,v_k]).$$    
Hence,  the weights in Lemma \ref{lem:coarsebound} when applied to $\cpts(r)$ undergo a simple scaling by $r$.  We first give bounds for $\cpts =\cpts(r)$ and then derive bounds for $\X$ by a perturbation argument. Set $\pM_k(\cpts) = \pM_k(\mcD(\cpts))$ and similarly for $\cpts \cup \{\0\}$ and $\pB_k$. 

We consider two cases. First for $k<d$, $k$-simplices are only added. While the weights of the simplices which are in both complexes may change, the weights may only increase. 
Since we add at least one $k$-simplex with weight at least $\frac{r}{2}$ and the added edges have weight $\frac{r}{2}$, all other simplices must have higher weight. Also we observe that we add at least one positive and at least one negative simplex for $k<d$ (see Lemma \ref{l:bettideath}). Using Theorem \ref{t:birthdeathmsa}, this gives the following bound for $k<d$.
\begin{equation}
\label{e:pMkQlowerbd}
 \pM_k(\cpts\cup \{\0\}) - \pM_k(\cpts) \geq \phi\left(\frac{r}{2}\right) \, ; \,
  \pB_k(\cpts \cup \{\0\}) - \pB_k(\cpts) \geq \phi\left(\frac{r}{2}\right).
 \end{equation}

For $k=d$, we need only consider $\pM_d$, since there are no births in dimension $d$. We note that we add $(d+1)$ $d$-simplices, one for each $(d-1)$-face, each of weight $\frac{dr}{2}$ and we remove one $d$-simplex of weight $r$. This gives the lower bounds  
\begin{align}
\label{e:pMdQlowerbd}
\begin{split}
    \pM_d(\cpts\cup \{\0\}) - \pM_d(\cpts) &\geq (d+1)\phi\left(\frac{dr}{2}\right)-\phi(r)\\
    &\geq d\phi\left(\frac{dr}{2}\right)\geq \phi\left(\frac{r}{2}\right) >0
    \end{split}
\end{align}
The final inequalities follow since $\phi$ is a strictly increasing function and $d>1$. 

We extend this to $\X$ as in statement of the lemma by a perturbation argument for $\varepsilon > 0$ small enough. For  $d\geq 3$,  choose $\varepsilon > 0$ such that
\begin{equation}\label{eq:epsbound}
 2(d+2)\varepsilon < \frac{r}{4}, 
 \end{equation}
%
Now let $\X$ be as in the statement of the lemma and assume that $\cpts' := \X \cap B_{3r}(\0) = \{p'_1,\ldots,p'_{d+1},q'_1,\ldots,q'_{d+1}\}$ where $|p_i-p'_i|, |q_i - q'_i| \leq \varepsilon$ for all $i =1,\ldots,d+1$, i.e.  $ A(p_i) = B_\varepsilon(p_i)$ and $A(q_i) = B_\varepsilon(q_i)$. 

Observe that there is a bijection between simplices in $\mcD_{\cpts'}$ (resp. $\mrD_{\cpts}$) and $\mcD_{\cpts}$ (resp. $\mcD_{\cpts'}$) with the difference between the corresponding weights bounded above by $2(d+2)\varepsilon$. Thus by assumption on $\X$ and construction of $\cpts$, we have that
\[ \pM_k(\X \cup \{\0\}) - \pM_k(\X) = \pM_k(\cpts' \cup \{\0\}) - \pM_k(\cpts'), \]
and similarly for $\pB_k$. Now following the derivation as in \eqref{e:pMkQlowerbd} and using our choice of $\varepsilon$,  we derive that for $1 \leq k < d$ 
\[ 
\phi(r/4) < \pM_k(\mathcal{X}\cup \{\0\}) - \pM_k(\mathcal{X}), 
\]
\[
\phi(r/4) < \pB_k(\mathcal{X}\cup \{\0\}) - \pB_k(\mathcal{X}), \]
Using \eqref{e:pMdQlowerbd} in the above argument, we can also derive for $k = d$,  $d > 3$ that
\[ \phi(r/4) < \pM_k(\mathcal{X}\cup \{\0\}) - \pM_k(\mathcal{X})  .\]
These prove \eqref{e:configbounds_birth} with $A(x_i) = B_{\varepsilon}(x_i)$. Thus, the above arguments have shown \eqref{e:configbounds} for $1 \leq k < d, d \geq 2$ and $k = d \geq 3$ with $A(x_i) = B_{\varepsilon}(x_i)$. This leaves the case of $k = d = 2$ open in \eqref{e:configbounds}.  

Let $k = d = 2$.  As we only require $\phi$ to be a positive increasing function, the above perturbation argument does not work for $d=2$  as is for the lower bound. In \eqref{e:pMdQlowerbd}, it is sufficient to show that after perturbation, at least one of the added simplices has a weight higher than the removed simplex.  Once this is established, the rest of the proof remains the same. 
As in the higher dimensional case, we must choose $\varepsilon > 0$ small enough so that the combinatorial structure of the complex changes in a predictable way. Therefore, as in \eqref{eq:epsbound}, we can choose
%
$\varepsilon < \frac{r}{40}$,
and set $A(q_i)=B_\varepsilon(q_i)$. 
Let a cone at a point $x$ in the direction $\overrightarrow{v}$ of angle $\alpha$ be denoted by $\mathrm{Cone}(x,\overrightarrow{v},\alpha)$. We the set:
\begin{align*}
    A(p_1) &= B_\varepsilon(p_1)\cap \mathrm{Cone}(p_1,(-2,-1),\pi/12)\\
    A(p_2) &= B_\varepsilon(p_2)\cap \mathrm{Cone}(p_2,(0,-1),\pi/12)\\
    A(p_3) &= B_\varepsilon(p_3)\cap \mathrm{Cone}(p_3,(2,-1),\pi/12)
\end{align*}
This can be seen in Figure~\ref{fig:cones}.
\begin{figure}
    \centering
    \includegraphics[width=0.5\textwidth,page=6]{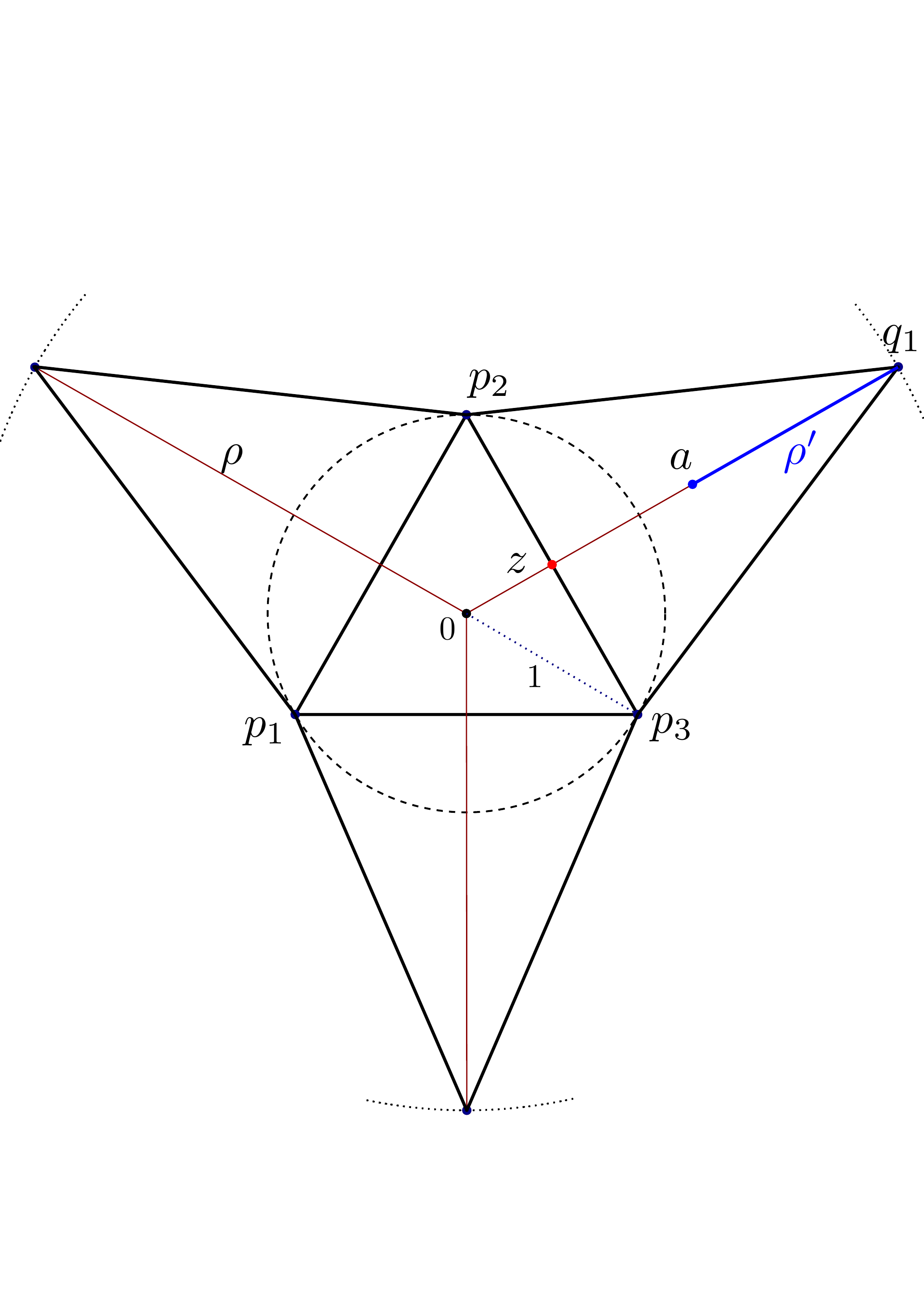}
    \caption{Balls around the points in the configuration for $d=2$ are shown in gray. The corresponding cones $A(p_i)$ are shown in green.}
    \label{fig:cones}
\end{figure}
Let $p'_i\in A(p_i)$ for all $i$. To complete the proof of bounds in \eqref{e:configbounds} and \eqref{e:configbounds_birth}, it suffices to show that
\[ w([p'_1,p'_2,p'_3]) \leq w([p'_1,\0,p'_3]) .\]
%

We observe that the angle $\angle (p'_1, \0, p'_3) \geq \frac{2\pi}{3}$, which we denote by $\alpha$. Further, let $c$ denote the center of the circumcircle of the points $\{p'_1,p'_2,p'_3\}$. It is  straightforward to also check that the angle $\angle (p'_1, c, p'_3)\geq \frac{2\pi}{3}$, which we denote by $\alpha'$.   

The weight  $w([p'_1,p'_2,p'_3])$ can be related with $||\overline{p'_1 p'_3}||$ and $\alpha$.
\begin{equation*}
 ||\overline{p'_1 p'_3}||^2  =  2 w^2([p'_1,p'_2,p'_3]) (1-\cos \alpha')
\end{equation*}
Likewise, the $w([p'_1,\0,p'_3]$ may be expressed as
\begin{align*}
    w([p'_1,\0,p'_3] = \frac{ ||\overline{p'_1 p'_3}||}{2\sin\alpha}
\end{align*}
Combining these we obtain
\begin{align*}
    \frac{w^2([p'_1,\0,p'_3]}{w^2([p'_1,p'_2,p'_3])} = \frac{ (1-\cos \alpha')}{2\sin^2\alpha}
\end{align*}
Rewriting $\alpha=2\pi/3 +\delta$ and $\alpha'=2\pi/3 +\delta'$ for $\delta,\delta'>0$, we must show that
\begin{align*}
    \frac{ (1-\cos(2\pi/3+\delta'))}{2\sin^2(2\pi/3+\delta)} \geq 1
\end{align*}
We note that $\delta'\geq \delta$, which are functions of $\varepsilon$, where $\delta' \rightarrow 0$ as $\varepsilon\rightarrow 0$. So as $\delta'\rightarrow 0$, we can directly verify that this expression approaches 1 from above proving the result for $\varepsilon$ small enough. Using the triangle inequality on the points, one can directly verify that the above inequality holds for $\varepsilon<\frac{r}{40}$. As stated above, this implies that the weight of at least one added simplex is greater that the removed simplex, so the lower bound of $\phi(r/2)$ holds.


%
%
\end{proof}
\begin{remark}
\label{r:configdc}
The construction above does not hold for all $\phi$ if we are using the Delaunay-\v Cech weights rather than  Alpha  weights. Rather, the configuration must be changed to achieve similar bounds. The alternate configuration is moving one of the inner points to the origin (and suitable adjusting the outer points), resulting in a flattened $d$-simplex, which is translated so that the centroid of the simplex is the origin. For this case, the upper bound as stated holds, but the lower bound can be shown to be strictly positive and depending on $r$ for all $\phi$.   
\end{remark}

\section*{Acknowledgements}
DY's research was partially supported by SERB-MATRICS Grant MTR/2020/000470, DST-INSPIRE Faculty grant and CPDA from the Indian Statistical Institute. The project also benefitted from discussion between the authors during their visit to Banff International Research Station in August 2017 and would like to thank the institution for its support. The authors would like to thank Omer Bobrowski and Gugan Thoppe for discussions at an early stage of the project. DY is very thankful to K. D. Trinh for pointing out some simplifications regarding growth conditions and variance lower bound using results from \cite{trinh2019central,trinh2022random}. DY is thankful to Xiaochuan Yang for some comments on an earlier draft.  

\bibliographystyle{plainnat}
\bibliography{CLT_MSA}

\appendix
\section{Some geometric computations.}\label{app:weight}
\begin{proof}[Proof of Lemma \ref{lem:geometric1}]
This follows from elementary geometry. Let $z$
be the center point of the simplex $[p_2,\ldots,p_{d+1}]$, where the ray from the origin to $q_1$ intersects the simplex. See Figure~\ref{fig:config} for an illustration. We note that
\begin{align*} 
||z||=\frac{1}{d}, \qquad\qquad ||\overline{zp_i}|| = \frac{\sqrt{d^2-1}}{d},  \, i = 2,\ldots,d+1. 
\end{align*}
We may rewrite:
$$||a|| - \rho' = ||\overline{zq_1}|| + ||z|| - 2\rho' = ||\overline{zq_1}|| + \frac{1}{d} - 2\rho' $$ 
We set $||\overline{zq_1}|| \geq 10d$, so that $\rho \geq 20d \geq 10d+\frac{1}{d}$. Let $\alpha$ denote the angle of $z q_1 p_2$. Then we have
$$ \cos \alpha = \frac{||\overline{zq_1}||}{||\overline{p_2q_1}||}, \qquad \rho' = \frac{||\overline{p_2q_1}||}{2\cos \alpha}$$
Combining these we obtain, 
$$2 \rho' = ||\overline{zq_1}|| + \frac{d^2-1}{d^2 ||\overline{zq_1}||} $$
Substituting above 
\begin{align*}
||a|| - \rho' &= ||\overline{zq_1}|| + \frac{1}{d} - 2\rho'\\ 
&= \frac{1}{d} - \frac{d^2-1}{d^2 ||\overline{zq_1}||} 
\end{align*}
Setting $||\overline{zq_1}||\geq 10d$, 
\begin{align*}
||a|| - \rho' &\geq \frac{1}{d} - \frac{d^2-1}{10d^3}\\
& = \frac{9d^2+1}{10d^3} \geq \frac{1}{10d}
\end{align*}
yielding the result.
\end{proof}

We give a derivation of the geometric fact used in the proof of Lemma \ref{lem:coarsebound}. \\

\paragraph{{\bf Weight of added $d$-simplex in $\cpts(r)$: $w([0,p_2,\ldots,p_{d+1}] = dr/2$. \\ \\}} 

Here we compute the weight of the added $d$-simplices in the Delaunay complex for  the  configuration $\cpts(r)$ in Section~\ref{s:varlb}. Without loss of generality, we can consider  the simplex $\sigma=[\0,p_1,p_2,\ldots,p_d]$. 
%
%
We now compute $w = w(\sigma)$. Let $z$ be the center of $\tau$, and $s$ denote the circumcenter of $\{p_1,p_2,\ldots,p_d]\}$ and $w$ the radius of the circumsphere (see Figure~\ref{fig:small} below). 
\begin{figure}[h!]
    \centering
    \includegraphics[width=0.5\textwidth, page=2]{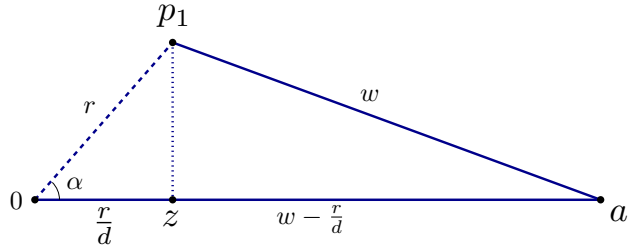}
    \caption{An illustration of how the weight of the added simplex, denoted $w$, is computed, i.e. the distance to the circumcenter $a$.}
    \label{fig:small}
\end{figure}
We have the following identity $\cos \alpha = 1/d$.
Applying the cosine rule:
\begin{align*}
    w^2  = r^2 +w^2 -2wr \cos \alpha\quad
   \Rightarrow \quad w = \frac{dr}{2}
\end{align*} 
we obtain the result. 
\end{document}